\newtheorem{theorem}{Theorem}[section]
\newtheorem{lemma}[theorem]{Lemma}
\newtheorem{proposition}[theorem]{Proposition}
\newtheorem{corollary}[theorem]{Corollary}
\newcommand{\thistheoremname}{}
\newtheorem*{genericthm*}{\thistheoremname}
\newenvironment{namedthm*}[1]
{\renewcommand{\thistheoremname}{#1}%
\begin{genericthm*}}
{\end{genericthm*}}
\theoremstyle{definition}
\newtheorem{definition}[theorem]{Definition}
\newtheorem{question}[theorem]{Question}
\newtheorem{problem}[theorem]{Open problem}
\DeclareMathOperator{\race}{r}
\DeclareMathOperator{\birth}{b}
\DeclareMathOperator{\outcomeR}{o_R}
\DeclareMathOperator{\flex}{f}
\DeclareMathOperator{\terminal}{T}
\newcommand{\longest}[1]{\overline{\mathsf L(#1)}}
\begin{document}

\title{On factorisations of Left dead ends}
\author{Alfie Davies\,\orcidlink{0000-0002-4215-7343}}
\affil{\small{Department of Mathematics and Statistics,\\
        Memorial University of Newfoundland,
        Canada
}}
\affil{\small{\emph{\href{mailto:research@alfied.xyz}{research@alfied.xyz}}}}
\date{}
\maketitle

\begin{abstract}
    Losing a game is difficult. Recent work of Larsson, Nowakowski, and Santos,
    as well as that of Siegel, has opened up enticing lines of research into
    partizan mis\`ere theory. The algebraic structure of mis\`ere monoids is
    not yet well understood. A natural object to study is a \emph{universe} of
    games---a set of games satisfying various closure properties. We prove that
    a universe is determined by its Left ends, thus motivating a study of such
    games, which was also noted by Siegel for dead ending universes in
    particular. We initiate this study by investigating the atomic structure of
    the monoid of Left dead ends, showing that it is reduced, pocancellative,
    and an FF-monoid. We give sharp bounds on the lengths of factorisations,
    and also build tools to find atoms. Some results require the introduction
    of a new concept called \emph{flexibility}, which is a measure of the
    distance from a game to integers. We also discover large families of games
    admitting unique factorisations; this includes the uncovering of a prime
    element: $\overline{1}$.
\end{abstract}

\section{Introduction}

Trying to lose a game is often harder than winning. That is, mis\`ere play has
proved itself an unruly twin to normal play. This reality has been a plight of
Combinatorial Game Theory for some decades. It is well-known that a shift began
with Plambeck and Siegel's work \cite{plambeck:taming, plambeck.siegel:misere},
which threw light upon the dark corners of impartial mis\`ere theory. More
recently, the results of Larsson, Nowakowski, and Santos
\cite{larsson.nowakowski.ea:absolute} have provided many lines of attack onto
the partizan theory. Their introduction of a \emph{universe}---a set of games
satifying various closure properties that we will explain in the next section
(see \cref{def:universe})---and a wonderful theorem on comparability when
restricted to such a structure, yield an effective tool that we may use to
better understand mis\`ere play.

In normal play, where the last player to move wins, the set of game
values\footnote{By the \emph{value} of a game, we mean its equivalence class
modulo `$=$'; cf.\ the \emph{game value} as defined by Siegel in
\cite[p.~12]{siegel:combinatorial}.} forms a group, and there are lots of games
that are comparable with one another. In mis\`ere play, however, where the last
player to move loses, not only does the set of game values not form a group,
but in fact there are no non-zero invertible elements at all, as shown first by
Mesdal and Ottoway \cite[Theorem 7 on p.~5]{mesdal.ottaway:simplification}.
Making things worse, the structure is rife with indistinguishable pairs of
games; indeed, from the 256 strict forms born by day 2, there are 256
equivalence classes in mis\`ere \cite[Theorem 6.2 on p.~233]{siegel:misere},
compared with 22 in normal play (see, for example,
\cite[p.~27]{calistrate.paulhus.ea:on}). There simply isn't a lot to work with;
at least, not as much as one would hope for, given the agreeability of normal
play.

The wonder of Larsson et al.'s work then is to create more workable mis\`ere
structures; they present a theorem \cite[Theorem 4 on
p.~6]{larsson.nowakowski.ea:absolute}, analogising the situation in normal
play, that allows us to determine the comparability of two games modulo a
universe. This is made all the more powerful by the fact that universes are
natural structures to define, and we already have some to play with (see
\cite{larsson.milley.ea:recursive, milley:restricted, milley.renault:dead,
allen:investigation}): the dicot universe $\mathcal{D}$, which is contained in
every other universe; the dead ending universe $\mathcal{E}$; and the full
mis\`ere universe $\mathcal{M}$. And the two universes $\mathcal{D}$ and
$\mathcal{E}$ do both have non-zero invertible elements. In fact, their
invertible elements have been characterised: Milley and Renault resolved the
dead-ending case \cite[Theorems 19 and 22 on
pp.~11--12]{milley.renault:invertible}; and Fisher, Nowakowski, and Santos
resolved the dicot case \cite[Theorem 12 on
p.~7]{fisher.nowakowski.ea:invertible}.

As fortune would have it, there are more universes than just these three.
Indeed, Siegel \cite[pp.~10--11 in \S3]{siegel:on} showed that there are
uncountably many universes lying between $\mathcal{D}$ and $\mathcal{E}$: the
\emph{dead-ending universes} (see \cite[p.~2]{siegel:on}), not to be confused
with \emph{the} dead-ending universe $\mathcal{E}$. This built off of other
work by Larsson et al.\ that showed that the set of such universes is at least
countably infinite \cite[Theorem 26 on
pp.~13--14]{larsson.nowakowski.ea:infinitely}---they also showed that there are
only two universes under normal play \cite[Theorem 14 on
pp.~7--8]{larsson.nowakowski.ea:infinitely}. Our first contribution is to show
that universes are entirely characterised by their Left ends. This result was
also discovered by Siegel \cite[Propositions 2.5 and 2.6 on p.~6]{siegel:on} in
the case of dead-ending universes.

\begin{namedthm*}{\cref{prop:left-ends}}
    If $\mathcal{A}$ is a set of games that is conjugate closed and hereditary,
    with $E_L(\mathcal{A})$ its set of Left ends, then
    $\mathcal{D}(\mathcal{A})=\mathcal{D}(E_L(\mathcal{A}))$.
\end{namedthm*}

An immediate consequence of this is that there exists a bijective
correspondence between universes and sets of Left ends that are additively
closed and hereditary (with respect to Left ends; i.e.\ for all Left ends in
the set, all subpositions that are Left ends must also be in the set). Thus, to
understand universes, it is paramount that we understand Left ends. But this is
not an easy problem. Consider, for example, any game $G\in\mathcal{M}$. Then
$\{\cdot\mid G\}$ is a Left end. And, to study a game form, one usually needs
to also study its subpositions, so understanding Left ends appears no different
than understanding general mis\`ere forms. What are we to do? Luckily for us,
there are natural families of Left ends that lend themselves much more
willingly to analysis: one such set is the family of \emph{Left dead ends},
whose subpositions are all Left ends.

Crucially, the options of a Left dead end are necessarily themselves Left dead
ends; we can no longer have arbitrary forms as options, as in our earlier
example of $\{\cdot\mid G\}$. By our correspondence between sets of Left ends
and universes, it is clear that (additively closed and hereditary) sets of Left
dead ends correspond precisely with dead-ending universes. Our lofty goal in
this paper then is to understand the structure of these Left dead ends, which
we begin to in the proceeding sections, so that we may better understand these
(dead-ending) universes. Siegel \cite[Theorem 3.1 on p.~7]{siegel:on} has
observed a simplified comparison test for Left dead ends (see
\cref{thm:reduced}) that we take full advantage of in this paper.

Our study has an algebraic slant; when equipped with the usual disjunctive
compound, the set of Left dead ends forms a commutative pomonoid.
Factorisations in monoids are well-studied, and there are lots of natural
questions to ask: when is an element an \emph{atom} (sometimes also called an
\emph{irreducible}, which is an element that cannot be written as a sum of two
non-unit elements); when do elements have factorisations (i.e.\ when can
elements be written as sums of atoms); how long can these factorisations be;
when are they unique? We investigate all of these questions as they pertain to
the monoid of Left dead ends.

First, in \cref{sec:prelims}, we give some technical background for both
Combinatorial Game Theory and Algebra.

In \cref{sec:properties}, we start to develop some tools to help us answer some
of the natural questions we might ask about our monoid. One of the main results
of the section is the following.

\begin{namedthm*}{\cref{thm:ff}}
    The monoid of Left dead ends $\mathcal{L}$ is an FF-monoid.
\end{namedthm*}

This says that, if $G$ is a Left dead end, then: $G$ has an factorisation;
every factorisation of $G$ is of bounded length; and there are only finitely
many factorisations of $G$.

In \cref{sec:prime}, we discover a prime element in our monoid of Left dead
ends: $\overline{1}=\{\cdot\mid0\}$.

\begin{namedthm*}{\cref{thm:one-prime}}
    The game $\overline{1}$ is prime in the monoid of Left dead ends
    $\mathcal{L}$.
\end{namedthm*}

For us, this means that, if $\overline{1}$ appears in some factorisation of a
Left dead end, then it appears in \emph{all} factorisations of that Left dead
end. We are not able to determine here whether any other atoms are prime; the
proof of this particular theorem relies heavily on a behaviour unique to
$\overline{1}$ (see \cref{prop:one-option,prop:one-good-option}).

In \cref{sec:flexibility}, we introduce the concept of \emph{flexibility},
which is a measure of the distance from a game $G$ to integers (see
\cref{def:flex}), and is written $\flex(G)$. We use this to idea to prove a
myriad of results. One such corollary is the following, where a \emph{good}
option is simply an option that is not strictly dominated by any other option
(see \cref{def:good-option}).

\begin{namedthm*}{\cref{cor:big-birth-atom}}
    If $G$ is a Left dead end with more than one good option, and
    $\birth(G)>2\cdot\flex(G)-2$, then $G$ is an atom.
\end{namedthm*}

This tells us that if the birthday of a Left dead end with more than one good
option gets \emph{too large} relative to its flexibility, then it must be an
atom---it cannot be written as a sum of two non-zero Left dead-ends.

Knowing that the factorisations of a game are of bounded length, it is natural
to then ask what bounds, if any, we can put on these lengths. We provide an
answer to this question in \cref{sec:factorisation-lengths}, where we write:
$\longest{G}$ for the length of a longest factorisation of $G$; $\terminal(G)$
for the set of lengths of maximal runs of $G$ (see
\cref{def:terminal-lengths}); and $\race(G)$ for the minimum element of
$\terminal(G)$ (see \cref{def:race}). Note that the inequality we provide is
indeed sharp.

\begin{namedthm*}{\cref{thm:one-option-bound,thm:length-bound}}
    If $G$ is a Left dead end with exactly one good option $G'$, then
    $\longest{G}=1+\longest{G'}$. Otherwise, if $G$ has more than one good
    option, then
    \[
        \longest{G}\leq\min\left\{\race(G),\longest{G'}+1,|\terminal(G)|-1,\frac{\flex(G)+1}{2}\right\},
    \]
    where $G'$ ranges over all good options of $G$.
\end{namedthm*}

This result allows us to recursively compute bounds for the lengths of
factorisations of any Left dead end $G$, which we have implemented in
\texttt{gemau} \cite{davies:gemau}---a basic software package we introduce to
aid in computations necessary in Combinatorial Game Theory, with a view to
provide good support for partizan mis\`ere theory specifically. Interested
readers should also look into \texttt{CGSuite} \cite{siegel:cgsuite}, written
by Aaron Siegel, which is the go-to system for these kinds of things.

It is also impossible to stop oneself from asking when a factorisation of a
game is unique. Could it be the case that every Left dead end is uniquely
factorisable? We are unable to find the answer to this question, but we make
significant progress. One simple result from the end of \cref{sec:uniqueness}
is the following.

\begin{namedthm*}{\cref{thm:strong}}
    If $G$ is a Left dead end with a good prime-factorisable option, then
    $n\cdot G$ is uniquely factorisable for all $n$.
\end{namedthm*}

\cref{sec:final-remarks} contains our final remarks where we discuss some open
problems and further directions for research.

\section{Preliminaries}
\label{sec:prelims}

As dead ends will be the spotlight of our investigation, we recall some
standard definitions from Milley and Renault \cite{milley:restricted,
milley.renault:dead}. We will not, however, recall the basics of the field of
Combinatorial Game Theory, and the reader is directed to Siegel
\cite{siegel:combinatorial} for any information pertaining to that. Firstly, we
emphasise that we shall be considering only \emph{short} games, although we
will occasionally still use the word to reinforce this.

So, as in \cite[Definition 2.1.8 on p.~10]{milley:restricted}, we will refer to
a game with no Left option as a \emph{Left end}; with Right ends defined
analogously. Then, as in \cite[Definition 1]{milley.renault:dead} and
\cite[Definition 5.1.1 on p.~75]{milley:restricted}, a Left end is called a
\emph{Left dead end}\footnote{Some authors may write `dead Left end,' or other
similar phrases, for `Left dead end,' as in \cite{milley.renault:restricted}.}
if every subposition is also a Left end; again, Right dead ends are defined
analogously. A game is \emph{dead ending} if every subposition that is an end
is a dead end.

As we have discussed in the introduction, this work is motivated by trying to
understand universes of games, which, following Siegel, we define thusly.

\begin{definition}[{\cite[Definition 1.4 on p.~2]{siegel:on}}; cf.\
    {\cite[Definition 12 on pp.~17--18]{larsson.nowakowski.ea:absolute}}]
    \label{def:universe}
    A set of games $\mathcal{U}$ is a \emph{universe}\footnote{
        Some authors would relax the definition of universe to not include the
        dicotic closure property (which has also been referred to as `parental
        closure'), as in \cite{larsson.nowakowski.ea:absolute}.
    }
    if the following closure properties are satisfied for all
    $G,H\in\mathcal{U}$:
    \begin{enumerate}
        \item
            (additive closure) $G+H\in\mathcal{U}$;
        \item
            (hereditary closure) $G'\in\mathcal{U}$ for every option $G'$ of
            $G$;
        \item
            (conjugate closure) $\overline{G}\in\mathcal{U}$; and
        \item
            (dicotic closure) if $\mathcal{G}$ and $\mathcal{H}$ are non-empty,
            finite subsets of $\mathcal{U}$, then
            \[
                \{\mathcal{G}\mid\mathcal{H}\}\in\mathcal{U}.
            \]
    \end{enumerate}
\end{definition}

\begin{definition}[{\cite[p.~5]{siegel:on}}]
    Given a set of games $\mathcal{A}$, define the \emph{universal closure} of
    $\mathcal{A}$, denoted $\mathcal{D}(A)$, as the minimal universe containing
    $\mathcal{A}$ (with respect to set inclusion).
\end{definition}

The reader may verify that intersections of universes are themselves universes,
which Siegel also notes, and hence writing `\emph{the} minimal universe
containing $\mathcal{A}$' is justified. Take caution with the notation
$\mathcal{D}(\mathcal{U})$; merely taking the dicotic closure of a set of games
is not enough to form a universe. It will, however, precisely when the set of
ends of the set already satisfies every other closure property in
\cref{def:universe}, and if every other game in the set lies within the dicotic
closure of its set of ends.

As mentioned in the introduction, we prove a striking property of universes:
they are completely characterised by their (Left) ends. That is, we can
reconstruct the forms of a universe given just its set of Left ends. We now
state and prove this result, in a slightly more general form than we have just
described.

\begin{proposition}
    \label{prop:left-ends}
    If $\mathcal{A}$ is a set of games that is conjugate closed and hereditary,
    with $E_L(\mathcal{A})$ its set of Left ends, then
    $\mathcal{D}(\mathcal{A})=\mathcal{D}(E_L(\mathcal{A}))$.
\end{proposition}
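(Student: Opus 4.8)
The plan is to prove the two inclusions separately, with the forward inclusion being immediate and the reverse requiring an inductive argument. For the easy direction, note that $E_L(\mathcal{A})\subseteq\mathcal{A}\subseteq\mathcal{D}(\mathcal{A})$, and $\mathcal{D}(\mathcal{A})$ is a universe; since $\mathcal{D}(E_L(\mathcal{A}))$ is the \emph{minimal} universe containing $E_L(\mathcal{A})$, we immediately obtain $\mathcal{D}(E_L(\mathcal{A}))\subseteq\mathcal{D}(\mathcal{A})$.

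For the reverse inclusion, it suffices to show $\mathcal{A}\subseteq\mathcal{D}(E_L(\mathcal{A}))$; writing $\mathcal{U}:=\mathcal{D}(E_L(\mathcal{A}))$, which is itself a universe, minimality of $\mathcal{D}(\mathcal{A})$ then forces $\mathcal{D}(\mathcal{A})\subseteq\mathcal{U}$, and we are done. I would establish the containment $\mathcal{A}\subseteq\mathcal{U}$ by induction on $\birth(G)$ for $G\in\mathcal{A}$, splitting into three cases according to whether $G$ is a Left end, a Right end, or neither. The base case $G=0$ is subsumed by the Left-end case below.

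The three cases run as follows. If $G$ is a Left end, then $G\in E_L(\mathcal{A})\subseteq\mathcal{U}$, with nothing to prove. If $G$ is a Right end but not a Left end, then its conjugate $\overline{G}$ is a Left end; since $\mathcal{A}$ is conjugate closed we have $\overline{G}\in E_L(\mathcal{A})\subseteq\mathcal{U}$, and applying the conjugate closure of the universe $\mathcal{U}$ returns $G=\overline{\overline{G}}\in\mathcal{U}$. Finally, if $G$ is neither a Left nor a Right end, then both of its option sets are non-empty; since $\mathcal{A}$ is hereditary, every option of $G$ lies in $\mathcal{A}$ and has strictly smaller birthday, so by the inductive hypothesis every option lies in $\mathcal{U}$, whence the dicotic closure of $\mathcal{U}$ gives $G\in\mathcal{U}$.

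The main obstacle is the asymmetry built into dicotic closure, which only manufactures a game from \emph{non-empty} sets of both Left and Right options. This is precisely why the naive induction breaks down for one-sided ends, and why conjugate closure is indispensable: it lets us trade a Right end (which dicotic closure cannot reach) for a Left end (which is available for free in $E_L(\mathcal{A})$), recovering $G$ at the cost of a single application of $\overline{\,\cdot\,}$. Once the case analysis is organised so that conjugate closure absorbs the Right ends, the remaining step is the routine birthday induction.
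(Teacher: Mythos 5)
Your proposal is correct and follows essentially the same route as the paper: a (formal-)birthday induction in which conjugate closure supplies the ends and dicotic closure, via heredity, supplies everything else. The only differences are organisational --- you prove $\mathcal{A}\subseteq\mathcal{D}(E_L(\mathcal{A}))$ and then invoke minimality of $\mathcal{D}(\mathcal{A})$, whereas the paper runs a minimal-counterexample argument directly inside $\mathcal{D}(\mathcal{A})$, and your explicit three-case split (Left end, Right end, neither) spells out the step the paper compresses into the remark that the two closures have identical ends.
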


\begin{proof}
    Since $E_L(\mathcal{A})\subseteq\mathcal{A}$, it is clear that
    $\mathcal{D}(E_L(\mathcal{A}))\subseteq\mathcal{D}(\mathcal{A})$. Suppose,
    for a contradiction, that we do not have equality. Then there must exist
    some game $G$ of minimal formal birthday such that
    $G\in\mathcal{D}(\mathcal{A})\backslash\mathcal{D}(E_L(\mathcal{A}))$.
    Since $\mathcal{A}$ is conjugate closed, it follows that
    $\mathcal{D}(\mathcal{A})$ and $\mathcal{D}(E_L(\mathcal{A}))$ have
    identical ends; so, $G$ cannot be an end. By the heredity of $\mathcal{A}$,
    every option $G'$ of $G$ is also in $\mathcal{A}$; and since $G'$ has
    formal birthday strictly less than $G$, we know that
    $G'\in\mathcal{D}(E_L(\mathcal{A}))$ by minimality of $G$. Thus, by the
    dicotic closure property of $\mathcal{D}(E_L(\mathcal{A}))$, we obtain that
    $G\in\mathcal{D}(E_L(\mathcal{A}))$, which is the contradiction we were
    after.
\end{proof}

Note that, if $\mathcal{U}$ is a universe, then it is already conjugate closed
and hereditary by definition, and $\mathcal{D}(\mathcal{U})$ is of course equal
to $\mathcal{U}$ itself. As such, the above proposition tells us exactly how to
recover the forms of $\mathcal{U}$ from its set of Left ends: take the
universal closure.

As noted in the introduction, understanding the structure of arbitrary Left
ends appears no different from understanding the structure of $\mathcal{M}$.
But we are gifted with the existence of a very interesting subset of Left ends:
the \emph{Left dead ends}. How useful it would be, then, if we had some
machinery to work with these dead ends. Fortunately, Siegel has given us just
that in his observation of a simplification in the comparison test of Larsson
et al.\ \cite[Theorem 4 on p.~6]{larsson.nowakowski.ea:absolute}.

\begin{theorem}[{\cite[Theorem 3.1 on p.~7]{siegel:on}}]
    \label{thm:reduced}
    If $\mathcal{U}$ is a universe, and $G$ and $H$ are Left dead ends, then
    $G\geq_\mathcal{U}H$ if and only if the following hold:
    \begin{enumerate}
        \item
            if $G\cong0$, then $H\cong0$; otherwise,
        \item
            for every $G^R\in G^{\mathcal{R}}$, there exists some $H^R\in
            H^{\mathcal{R}}$ such that $G^R\geq_\mathcal{U}H^R$.
    \end{enumerate}
\end{theorem}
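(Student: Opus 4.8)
The plan is to specialise the absolute comparison test of Larsson, Nowakowski, and Santos to the case where both games are Left dead ends, exploiting the structural fact that a Left dead end and \emph{all} of its followers have only Right options. Recall that $G\geq_\mathcal{U}H$ means that Left fares at least as well in $G+X$ as in $H+X$ for every context $X\in\mathcal{U}$, under both the Left-start and the Right-start convention. I would split the equivalence into sufficiency (conditions (1)--(2) imply $G\geq_\mathcal{U}H$) and necessity (the converse), proving sufficiency by a direct strategy-transfer argument and reading off necessity from the specialised test.

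For sufficiency, assume (1) and (2) and fix an arbitrary $X\in\mathcal{U}$; I would show by induction on the formal birthday of $X$ that Left does at least as well in $G+X$ as in $H+X$ under both conventions. Since $G$ and $H$ are Left ends, the only place Left can ever move in either sum is inside $X$, which is what makes the induction go through cleanly. The crucial step is the Right-start convention: if Left wins $H+X$ with Right to move, then every Right option of $H+X$ is a Left win. When Right plays in the $G$-component, to some $G^R+X$, condition (2) supplies an $H^R$ with $G^R\geq_\mathcal{U}H^R$; as $H+X\to H^R+X$ is a legal (hence losing-for-Right) move, $H^R+X$ is a Left win with Left to move, and applying the relation $G^R\geq_\mathcal{U}H^R$ in the context $X$ transfers this to $G^R+X$. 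When Right instead plays in $X$, to $G+X^R$, I invoke the induction hypothesis at the smaller context $X^R$. The Left-start convention is handled symmetrically, Left's only options again lying in $X$. Condition (1) is exactly what the base case $X\cong0$ requires: a nonzero Left dead end is a Left win with Right to move (Right is forced to move into a Left end where Left cannot reply and so wins), whereas $0$ is a Left loss, so the Right-start inequality at $X\cong0$ holds precisely because (1) guarantees $H\cong0$ whenever $G\cong0$.

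The same computation against the context $0\in\mathcal{D}\subseteq\mathcal{U}$ gives the necessity of (1) immediately. For the necessity of (2), I would specialise the maintenance clause of the general comparison test. In full generality that clause is a disjunction: to justify a Right option $G^R$ one asks either for a dominating Right option $H^R$ with $G^R\geq_\mathcal{U}H^R$, or for a witnessing Left option among the relevant followers. Because $G$, $H$, and all of their followers are Left ends, every disjunct mentioning a Left option is vacuous, and maintenance collapses to precisely condition (2).

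The step I expect to be the main obstacle is the necessity of (2). Unwinding it straight from the definition would require, for a given $G^R$, assembling a \emph{single} context in $\mathcal{U}$ that simultaneously defeats every candidate $H^R$; and these separating contexts must be drawn from $\mathcal{U}$ even though $G$ and $H$ themselves need not lie in $\mathcal{U}$. This simultaneous-separation problem is exactly the content that the general test (through the universe's dicotic closure) already resolves, so the cleanest route is to quote the specialised maintenance condition rather than to rebuild the witnessing context by hand.
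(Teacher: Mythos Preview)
The paper does not supply its own proof of this statement; it is quoted from Siegel's work, and the surrounding text makes clear that Siegel obtains it as ``a simplification in the comparison test of Larsson et al.'' Your plan follows exactly that line for the necessity direction: since every follower of a Left dead end is again a Left end, the Left-option disjuncts in the maintenance clause are vacuous, and what survives is precisely condition~(2); condition~(1) falls out of the context $0\in\mathcal{D}\subseteq\mathcal{U}$ as you say. For sufficiency you instead give a direct strategy-transfer argument by induction on the birthday of the context $X$, rather than reading both directions off the general test. That is a mild deviation, but it is correct and pleasantly self-contained; in particular your handling of the base case $X\cong0$ (where the Right-start outcome of a Left dead end is determined solely by whether it is $\cong0$) is exactly what pins down condition~(1).

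The one point that needs care is the one you already flag: the general comparison theorem is typically stated for $G,H\in\mathcal{U}$, whereas here $G$ and $H$ need not lie in $\mathcal{U}$ at all (a non-zero Left dead end is never a dicot, for instance, so the case $\mathcal{U}=\mathcal{D}$ already falls outside the literal hypotheses). Your proposed resolution is the right one: the construction of the distinguishing context in the maintenance direction uses only the dicotic closure of $\mathcal{U}$ to assemble the simultaneous witness, not membership of $G$ or $H$ themselves, so that half of the argument transfers verbatim. Making this explicit---or citing the place in the literature where it is made explicit---is the only spot where your write-up would need to be tightened; otherwise the proposal matches the intended route.
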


As Siegel points out, this means that comparing Left dead ends is independent
of the universe; the partial order of Left dead ends is \emph{multiversal}.

\begin{corollary}[\cite{siegel:on}]
    \label{cor:multiversal}
    If $\mathcal{U}$ is a universe, and $G$ and $H$ are Left dead ends with
    $G\geq_\mathcal{U}H$, then $G\geq_\mathcal{W}H$ for all universes
    $\mathcal{W}$.
\end{corollary}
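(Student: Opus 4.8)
The plan is to deduce this directly from \cref{thm:reduced} by an induction that exploits the universe-independence of the comparison test. The crucial point is that the conditions (1) and (2) in \cref{thm:reduced} reference the ambient universe \emph{only} through the recursive comparisons $G^R\geq_\mathcal{U}H^R$ appearing in (2); condition (1) is a purely syntactic statement about whether $G$ and $H$ are (formally) $0$. So if one can show that these recursive comparisons are themselves universe-independent, the whole test becomes universe-independent, and the corollary follows at once.

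First I would record the enabling structural fact that every Right option $G^R$ of a Left dead end $G$ is again a Left dead end: since $G$ is a Left dead end, each of its subpositions is a Left end, and as the subpositions of $G^R$ are among the subpositions of $G$, the option $G^R$ is itself a Left dead end (and likewise for the options of $H$). This guarantees that the comparisons $G^R\geq_\mathcal{U}H^R$ invoked in condition (2) are comparisons \emph{between Left dead ends}, so that \cref{thm:reduced} applies to them as well and the induction stays within the class to which the theorem pertains.

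I would then induct on the sum of the formal birthdays of $G$ and $H$. In the base case, the smallest instances are governed by condition (1), which makes no mention of the universe, so $G\geq_\mathcal{U}H$ and $G\geq_\mathcal{W}H$ hold or fail simultaneously. For the inductive step, I would apply \cref{thm:reduced} to $G\geq_\mathcal{U}H$: condition (1) is universe-free, and for condition (2) each witnessing comparison $G^R\geq_\mathcal{U}H^R$ involves Left dead ends of strictly smaller combined formal birthday, so by the induction hypothesis $G^R\geq_\mathcal{U}H^R$ if and only if $G^R\geq_\mathcal{W}H^R$. Substituting these equivalences into the statement of \cref{thm:reduced} shows that the test for $G\geq_\mathcal{U}H$ and the test for $G\geq_\mathcal{W}H$ coincide verbatim, whence $G\geq_\mathcal{U}H$ implies $G\geq_\mathcal{W}H$.

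The only real subtlety, rather than a genuine obstacle, is bookkeeping: one must be sure the induction bottoms out and that the objects being compared never leave the world of Left dead ends, which is exactly what the structural observation above secures. Since $\mathcal{W}$ is an arbitrary universe and the argument is symmetric in $\mathcal{U}$ and $\mathcal{W}$, this in fact yields the stronger equivalence $G\geq_\mathcal{U}H\iff G\geq_\mathcal{W}H$, making the partial order on Left dead ends multiversal as claimed.
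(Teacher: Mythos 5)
Your proposal is correct and is precisely the argument the paper intends: the corollary is stated without an explicit proof because it follows from \cref{thm:reduced} by exactly the induction you describe, namely that the comparison test mentions the universe only through recursive comparisons of Right options (which remain Left dead ends), so the test is universe-independent. Your write-up simply makes explicit the bookkeeping (induction on combined formal birthday, closure of Left dead ends under taking options) that the paper leaves implicit.
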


We will remind the reader here that, when we write `$\geq$' (and similarly for
`$=$'), we mean modulo $\mathcal{M}$ (the full mis\`ere universe). Given
\cref{cor:multiversal}, however, if we are discussing Left dead ends, then
$\geq$ and $\geq_\mathcal{U}$ are the same relation for every universe
$\mathcal{U}$.

As our study will exist largely within an algebraic context, we take the time
now to recall some relevant definitions. The reader is invited to consult the
factorisation literature for more detail, but one should keep in mind that it
is common there to define a monoid as a commutative and unit-cancellative
(sometimes even cancellative) semigroup with identity, which is non-standard in
other areas of algebra. In particular, see
\cite[pp.~7--9]{geroldinger:additive} for a sufficient and concise
introduction, or any of: \cite[Ch.\ 1]{geroldinger.halter-koch:non-unique},
\cite[pp.~207--211]{geroldinger.halter-koch:survey}, and \cite[\S2.2 on
p.~27]{geroldinger.zhong:factorization}.

While multiplicative notation can commonly be used, we will write the
background here in additive notation; in part because we will be working
exclusively with a commutative monoid, but mainly because additive notation is
so prevalent in Combinatorial Game Theory, which is the main field in which our
study resides.

The algebraic object of importance to us here is the commutative pomonoid (also
called an \emph{ordered monoid}, as in \cite[p.~193]{blyth:lattices}), which
(for us) is a semigroup $S$ equipped with: a (unique) identity element `0'; a
commutative binary operation `+'; and a partial order `$\geq$' compatible with
the binary operation:
\[
    a\geq b\implies a+c\geq b+c\quad\text{for all }a,b,c\in S\cup\{0\}.
\]
Of course, a pomonoid is trivially also a monoid. Writing $M$ for a monoid,
recall that $M$ is said to be:
\begin{itemize}
    \item
        \emph{reduced} if its set of invertible elements contains only a single
        element (the identity);
    \item
        \emph{cancellative} if, for all $a,b,c\in M$, the implication
        $a+c=b+c\implies a=b$ holds; and
    \item
        \emph{pocancellative} if $M$ is a pomonoid and, for all $a,b,c\in M$,
        the implication $a+c\geq b+c\implies a\geq b$ holds.
\end{itemize}

Note, of course, that, if every element of $M$ is invertible, then $M$ is a
group. Also, it is immediate that pocancellativity implies cancellativity.

An important property of monoids to us will be atomicity. We say that a
non-invertible element $a$ of a monoid $M$ is an \emph{atom} if, whenever
$a=b+c$ for some elements $b,c\in M$, then either $b$ or $c$ is invertible. In
the case that $M$ is reduced, recall that an element being invertible means
that it is the identity.

An element $m\in M$ is called \emph{atomic} if it can be written as a (finite)
sum of atoms; i.e.\ $m=a_1+a_2+\cdots+a_n$ for some atoms $a_i\in M$. We call
such a sum of atoms a \emph{factorisation} of $m$ (for a more formal
definition, consult one of the references provided). The \emph{length} of a
factorisation is the number of atoms used (counting duplicates):
$a_1+a_2+\cdots+a_n$ has length $n$, regardless of whether the $a_i$ are
pairwise distinct or not. We will say that an element $m\in M$ is
\emph{uniquely factorisable} if it admits exactly one factorisation (up to a
rearrangement). The monoid $M$ itself is called \emph{atomic} if every
(non-invertible) element is atomic. It will be a convenience for us to
introduce a term for a non-invertible element that is atomic but not an atom:
\emph{molecule}\footnote{This term was suggested by Danny Dyer.}.

\begin{definition}
    In a monoid $M$, we call a non-invertible element $m\in M$ a
    \emph{molecule} if $m$ is atomic but not an atom.
\end{definition}

A monoid is called a \emph{BF-monoid} (a \emph{bounded factorisation monoid})
if it is atomic and every element admits a longest factorisation. It is called
an \emph{FF-monoid} (a \emph{finite factorisation monoid}) if it is a BF-monoid
and every element admits only finitely many factorisations.

It would be impossible for us to talk about factorisations here without
discussing primality. Given elements $a$ and $b$ in a monoid $M$, we say that
$a$ \emph{divides} $b$, written $a\mid b$, if there exists some $c\in M$ such
that $a+c=b$. A non-invertible element $p\in M$ is then called \emph{prime} if,
whenever $p\mid a+b$ for some $a,b\in M$, then $p\mid a$ or $p\mid b$. We will
call a factorisation consisting only of prime elements a \emph{prime
factorisation}; and we will say that an element is \emph{prime-factorisable} if
it admits such a factorisation.

A particular kind of atom that has received attention in the literature is that
of the \emph{strong} atom \cite[p.~1]{angermuller:strong} (somtimes called
\emph{absolutely irreducible}, as in \cite[Definition 7.1.3 on
p.~444]{geroldinger.halter-koch:non-unique}); these are atoms $a$ such that the
only divisors of $n\cdot a$ are multiples of $a$ (up to a unit).

It is a simple result that, in an atomic monoid, every prime is an atom. It is
not the case, in general, that every atom is prime. A key question we will ask
later, about the monoid of Left dead ends, is when we can conclude that an atom
is prime. Furthermore, in a monoid that is reduced, atomic, cancellative, and
commutative, every prime-factorisable element is uniquely factorisable. Thus,
in such a monoid, there is a clear dichotomy: every element has a
factorisation, but not necessarily uniquely; conversely, not every element has
a prime factorisation, but, when it does, it is uniquely factorisable.

There are many notations and bits of terminology used to describe the sets of
lengths of factorisations of elements of a monoid, but we will not have use for
all of that here. We borrow only a notation to denote the length of a
\emph{longest} factorisation of an element $m$: $\longest{m}$.

Note that any universe of combinatorial games is trivially a commutative
pomonoid (and there are many partial orderings that one might want to equip).
So, too, is any set of games containing $0$ that is additively closed.

It will be convenient for us to use sumset notation (Minkowski sums), writing
$A+B$ for the set $\{a+b:(a,b)\in A\times B\}$, and also $A+k$ for $\{a+k:a\in
A\}$ in the special case of adding a singleton.

\section{Properties of the monoid}
\label{sec:properties}

We use $\mathcal{L}$ to denote the set of all Left dead ends, in accordance
with Siegel \cite[Definition 1.2 on p.~2]{siegel:on}. It is clear that
$(\mathcal{L},+)$ is a commutative monoid (since it is a set of games
containing $0$ that is additively closed). We will abuse notation and also
refer to this monoid by $\mathcal{L}$. We remind the reader again that `$\geq$'
and `=' will have their usual meaning (modulo full mis\`ere, which is
equivalent to being modulo any universe thanks to \cref{cor:multiversal}).

We now prove a number of properties of the monoid $\mathcal{L}$ that we
discussed in the introduction and preliminaries: that $\mathcal{L}$ is reduced,
pocancellative, and an FF-monoid. Along the way, we will build some other tools
that will also help us later, and so divide this path up into subsections.

\subsection{Basic properties}

We begin by showing that the monoid of Left dead ends is reduced, which follows
immediately after observing that $\mathcal{L}$ is a submonoid of the reduced
monoid $\mathcal{M}$---recall that Mesdal and Ottaway showed in \cite[Theorem 7
on p.~5]{mesdal.ottaway:simplification} that, for any partizan game form $G$ in
mis\`ere play, $G=0$ if and only if $G\cong0$.

\begin{theorem}
    The monoid of Left dead ends $\mathcal{L}$ is reduced.
\end{theorem}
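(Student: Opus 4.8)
The plan is to show that the only invertible element of $\mathcal{L}$ is $0$, i.e.\ that the monoid is reduced. Since reducedness means precisely that the group of units is trivial, I would argue that $\mathcal{L}$ inherits this property from the larger monoid $\mathcal{M}$.

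First I would observe that $\mathcal{L}$ is a submonoid of $\mathcal{M}$: every Left dead end is in particular a game, the identity $0$ belongs to $\mathcal{L}$, and $\mathcal{L}$ is closed under $+$ (as already noted in the text, since the sum of two Left dead ends is again a Left dead end). Crucially, the partial order and equality on $\mathcal{L}$ are, by convention, taken modulo $\mathcal{M}$; so if two Left dead ends are equal as elements of $\mathcal{L}$, they are equal as elements of $\mathcal{M}$, and conversely. This means that invertibility in $\mathcal{L}$ forces invertibility in $\mathcal{M}$: if $G\in\mathcal{L}$ has an inverse $H\in\mathcal{L}$ with $G+H=0$ (equality modulo $\mathcal{M}$), then $G$ is an invertible element of $\mathcal{M}$ as well.

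Next I would invoke the cited result of Mesdal and Ottaway \cite[Theorem 7 on p.~5]{mesdal.ottaway:simplification}, which states that for any partizan game form $G$ in mis\`ere play, $G=0$ if and only if $G\cong0$. In particular, the only invertible element of $\mathcal{M}$ is $0$ itself: if $G+H=0$ then $G=0$ (and $H=0$), since $\mathcal{M}$ has no non-zero invertible elements. Applying this to our situation, any invertible $G\in\mathcal{L}$ must satisfy $G=0$, and hence $G\cong0$; so the only unit of $\mathcal{L}$ is the identity $0$.

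There is no real obstacle here; the result is genuinely immediate once the submonoid relationship is made explicit. The only point requiring a moment's care is the compatibility of the equality relations—namely that equality in $\mathcal{L}$ is by definition equality modulo $\mathcal{M}$ (guaranteed by \cref{cor:multiversal}), so that a unit of $\mathcal{L}$ really is a unit of $\mathcal{M}$ and the Mesdal--Ottaway theorem applies directly. I would therefore keep the proof to a single short paragraph, stating that $\mathcal{L}\subseteq\mathcal{M}$ as a submonoid, that $\mathcal{M}$ is reduced by Mesdal and Ottaway, and that a submonoid of a reduced monoid is reduced.
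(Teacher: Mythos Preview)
Your proposal is correct and matches the paper's approach exactly: the paper also observes that $\mathcal{L}$ is a submonoid of the reduced monoid $\mathcal{M}$ (reduced by Mesdal and Ottaway's result that $G=0$ iff $G\cong0$), and concludes immediately. Your added remark about the compatibility of equality relations via \cref{cor:multiversal} is a nice clarification, though the paper treats this as understood from context.
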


\begin{proof}
    A partizan game form is equal to 0 if and only if it is isomorphic to 0,
    and hence $0$ is the only invertible element present in $\mathcal{L}$.
    Thus, $\mathcal{L}$ is reduced.
\end{proof}

Recall from Siegel \cite[Definition 1.4 on p.~9]{siegel:combinatorial} that a
run of length $k$ in a combinatorial game $G$ is a sequence
$(G_0,G_1,\dots,G_k)$ such that $G_0\cong G$ and $G_{i+1}$ is an option of
$G_i$ for every $i$.\footnote{
    The definition cited actually writes $G_0=G$ instead of $G_0\cong G$, but
    the latter is what is meant, so we write it to help avoid confusion.
} It is clear that, if $G$ is a short game, then there exists a run of length
$k$ exactly for those $k\in\mathbb{N}_0$ satisfying $0\leq
k\leq\tilde\birth(G)$. Then, as a simple corollary of \cref{thm:reduced}, we
have the following result.

\begin{corollary}
    \label{cor:move-runs}
    If $G$ and $H$ are Left dead ends with $G\geq H$, and $k$ is an integer
    satisfying $0\leq k\leq\tilde\birth(G)$, then, for any run
    $(G_0,G_1,\dots,G_k)$ of $G$ of length $k$, there exists a run
    $(H_0,H_1,\dots,H_k)$ of $H$ of length $k$ such that $G_i\geq H_i$ for
    every $0\leq i\leq k$.
\end{corollary}

\begin{proof}
    Take a run $(G_0,G_1,\dots,G_k)$ of $G$ of length $k$. By hypothesis, we
    have $G_0\geq H$. Then, by \cref{thm:reduced}, we know that there exists an
    option $H_1$ of $H$ such that $G_1\geq H_1$. By induction on the formal
    birthday, given the run $(G_1,\dots,G_k)$ of $G_1$ of length $k-1$, there
    exists a run $(H_1,\dots,H_k)$ of $H_1$ of length $k-1$ such that $G_i\geq
    H_i$ for every $1\leq i\leq k$. Thus, the run $(H,H_1,\dots,H_k)$ of $H$ of
    length $k$ satisfies the requirements of the conclusion, yielding the
    result.
\end{proof}

It will simplify our later exposition if we give the following definition of a
\emph{good} option. Without it, we would have to make copious references to the
canonical form of a game, which the reader is free to do themselves if they so
wish.

\begin{definition}
    \label{def:good-option}
    If $G$ is a combinatorial game, then we will say that a (Left, respectively
    Right) option $G'$ of $G$ is \emph{good} if it is not strictly dominated by
    another (Left, respectively Right) option of $G$.
\end{definition}

So, an option $G'$ of a Left dead end $G$ is good if there exists no option
$G''$ with $G'>G''$. Equivalently, $G'$ is good if it is equal to some option
that appears in the canonical form of $G$; this follows from \cite[Theorem 3.7
on p.~8]{siegel:on}, which establishes that the canonical form of a Left dead
end is itself a Left dead end, and is obtained by removing all dominated
options from all subpositions of the game.

Given \cref{def:good-option}, we will state another obvious corollary of
\cref{thm:reduced} that we will have use for later on. Note, of course, that
every non-zero Left dead end has at least one good option.

\begin{corollary}
    \label{cor:good-option-replies}
    If $G$ and $H$ are Left dead ends with $G\geq H$, and $G'$ is a good option
    of $G$, then there exists a (good) option $H'$ of $H$ such that $G'\geq
    H'$. In particular, if $G=H$, then $G'=H'$.
\end{corollary}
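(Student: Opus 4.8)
The plan is to prove \cref{cor:good-option-replies} as a direct application of the simplified comparison test \cref{thm:reduced}. Since $G$ and $H$ are Left dead ends with $G \geq H$, and $G'$ is a good option of $G$, I first observe that $G'$ is in particular a Right option of $G$ (as Left dead ends have only Right options; their Left option sets are empty). The comparison test then immediately supplies a Right option $H'$ of $H$ with $G' \geq H'$. The only subtlety is the appeal to condition (1) of \cref{thm:reduced}: I must ensure that $G \not\cong 0$, so that we are genuinely in the ``otherwise'' case (2). But this is guaranteed precisely because $G$ has a good option $G'$ at all---a copy of $0$ has no options, good or otherwise---so $G$ is a non-zero Left dead end and condition (2) applies.

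Next I would address the parenthetical claim that $H'$ may be taken to be \emph{good}. The point is that if the $H'$ delivered by \cref{thm:reduced} is not good, then by \cref{def:good-option} it is strictly dominated by some option $H''$ of $H$, i.e.\ $H'' > H'$; replacing $H'$ by $H''$ preserves the inequality since $G' \geq H' $ and $H' < H''$ would break it---so I need to be careful here. Dominance goes the wrong way for a naive substitution. Instead I would argue that among all Right options of $H$ that are $\leq G'$, I can pass to a good one: since $\mathcal{L}$ is an FF-monoid (hence options are suitably well-behaved under the partial order), any option is dominated by some good option, and in a Left dead end every option is itself equal to a good option appearing in the canonical form. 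Using the characterisation recorded after \cref{def:good-option}---that good options are exactly those equal to options of the canonical form---I can replace $H'$ by the good option of $H$ that it equals, which leaves $G' \geq H'$ intact.

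Finally, for the ``in particular'' clause, suppose $G = H$. Applying what we have just proved in both directions gives, for the good option $G'$ of $G$, a good option $H'$ of $H$ with $G' \geq H'$; and symmetrically, since $H = G$, there is a good option $G''$ of $G$ with $H' \geq G''$. Chaining these yields $G' \geq H' \geq G''$, so $G' \geq G''$. Because $G'$ and $G''$ are both \emph{good} options of the same game $G$, and $G'$ is good means it is not strictly dominated by any option, the inequality $G' \geq G''$ cannot be strict; hence $G' = G''$, which forces $G' = H'$.

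I expect the main obstacle to be the ``good'' refinement in the second paragraph: the direction of strict domination must be tracked carefully so that replacing a non-good $H'$ by a good option does not destroy the inequality $G' \geq H'$. The cleanest route is to invoke the canonical-form characterisation of good options established via \cite[Theorem 3.7]{siegel:on} rather than to manipulate dominance chains by hand, since that characterisation guarantees every option is equal to a good one and equality preserves all the required comparisons.
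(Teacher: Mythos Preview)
Your first paragraph is fine and matches the paper's use of \cref{thm:reduced}; noting that $G\not\cong 0$ because $G$ has an option is a good touch.

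The second paragraph contains a genuine error. You reversed the direction of dominance: for a Right option $H'$ of a Left dead end, ``$H'$ is not good'' means there exists an option $H''$ with $H' > H''$ (see the sentence immediately following \cref{def:good-option}), not $H'' > H'$. With the correct direction, the substitution you dismissed actually works perfectly: from $G'\geq H'$ and $H' > H''$ you get $G'\geq H''$ by transitivity, and iterating (the game is short, so this terminates) lands you at a good option $H^\dagger$ of $H$ with $G'\geq H^\dagger$. This is exactly what the paper does, in one line. Your proposed workaround---that ``every option is itself equal to a good option appearing in the canonical form''---is false: a strictly dominated option need not be equal to any good option, so invoking the canonical-form characterisation to \emph{replace $H'$ by the good option it equals} does not make sense. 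The appeal to $\mathcal{L}$ being an FF-monoid is also irrelevant here (and circular, since that result appears much later in the paper).

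Your treatment of the $G=H$ case in the third paragraph is correct and essentially identical to the paper's argument: chain $G'\geq H'\geq G''$ with $G',G''$ good options of $G$, then use goodness of $G'$ to conclude $G'=G''$ and hence $G'=H'$.
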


\begin{proof}
    By \cref{thm:reduced}, there exists some option $H''$ of $H$ such that
    $G'\geq H''$. By \cref{def:good-option}, there exists a good option $H'$ of
    $H$ (possibly $H''$ itself) such that $H''\geq H'$. By transitivity of the
    relation, it is immediate that $G'\geq H'$.

    If $G=H$, then, in addition to our previous argument, we know also that
    $H\geq G$, and so there must exist some good option $G''$ of $G$ such that
    $G'\geq H'\geq G''$. Since $G'$ is a good option by hypothesis, it follows
    that $G'=G''$ straight from \cref{def:good-option}. Thus, $G'=H'$.
\end{proof}

It is unknown for most mis\`ere monoids whether they exhibit cancellativity or
not. Luckily for us, Left dead ends are very well-behaved, and we prove shortly
that the monoid of Left dead-ends is pocancellative. This would also follow
from recent work by the present author, McKay, Milley, Nowakowski, and Santos
\cite{davies.mckay.ea:pocancellation}, which shows that $\mathcal{E}$ is
pocancellative---whereas $\mathcal{D}$ and $\mathcal{M}$ are not. We do not
need all this machinery, however, and we include a concise proof here for our
special case.

\begin{lemma}
    \label{lem:pocancellative}
    If $\mathcal{U}$ is a universe, $G,H\in\mathcal{U}$ are Left dead ends, and
    $J$ is a pocancellative element of $\mathcal{U}$ satisfying $G+J\geq H+J$,
    then $G\geq H$.
\end{lemma}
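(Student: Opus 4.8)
The plan is to prove the contrapositive-free statement directly by induction on formal birthday, using the simplified comparison test (\cref{thm:reduced}) as the workhorse. The hypothesis $G+J\geq H+J$ must be unpacked via that test: since $G+J$ and $H+J$ are Left dead ends (being sums of Left dead ends), comparing them reduces to comparing Right options. A Right option of $G+J$ is either of the form $G^R+J$ or $G+J^R$, and similarly for $H+J$; so the test tells me that every Right option of $H+J$ is dominated by some Right option of $G+J$. My goal is to show $G\geq H$, which by \cref{thm:reduced} again amounts to handling two cases: if $G\cong 0$ I must show $H\cong 0$; otherwise, for every $H^R\in H^\mathcal{R}$ I must produce some $G^R\in G^\mathcal{R}$ with $G^R\geq H^R$.

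First I would dispose of the base/degenerate case where $G\cong 0$. Here $G+J=J$, and the hypothesis reads $J\geq H+J$. I would argue that this forces $H\cong 0$: intuitively, adding the nonzero Left dead end $H$ to $J$ should strictly help Left (or at least not help Right), so $J\geq H+J$ together with reducedness should pin down $H\cong 0$. This is where I expect to lean on the pocancellativity of $J$ itself, perhaps combined with the fact that $H+J\geq J$ always holds for Left dead ends (since a Left dead end is $\geq 0$ in this order — every Left dead end loses for Left, so $H\geq 0$, giving $H+J\geq J$), and then $J\geq H+J\geq J$ yields $H+J=J$, whence pocancellativity of $J$ gives $H\leq 0$ and $H\geq 0$, so $H=0$, i.e.\ $H\cong 0$ by reducedness.

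For the main inductive step, assume $G\not\cong 0$ and take an arbitrary $H^R\in H^\mathcal{R}$. Then $H^R+J$ is a Right option of $H+J$, so by the comparison test applied to $G+J\geq H+J$ there is a Right option of $G+J$ dominating it. That dominating option is either $G^R+J$ for some $G^R\in G^\mathcal{R}$, or $G+J^R$ for some $J^R\in J^\mathcal{R}$. In the first case I get $G^R+J\geq H^R+J$, and since $J$ is pocancellative I may cancel to obtain $G^R\geq H^R$, which is exactly the reply I need. The troublesome case is the second, where I only have $G+J^R\geq H^R+J$; here I need to convert a $J$-move on the left into a $G$-move, and it is not immediate that this produces a usable $G^R\geq H^R$.

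The hard part will be ruling out or circumventing that second case. My intended resolution is to show it cannot actually furnish the \emph{minimal} obstruction, or to feed it back into the induction: from $G+J^R\geq H^R+J$ I would apply the comparison test once more to chase options, aiming to show that Right moving in $J$ on the left cannot help Right overall because $G\geq 0$ (so $J^R$ versus $J$ is the relevant comparison and $G$ only helps Left). Concretely, I expect to combine $G+J^R\geq H^R+J$ with the fact that $J$ being pocancellative and $G$ being a Left dead end (hence $\geq 0$) forces $J^R\geq$ something comparable, and then invoke the inductive hypothesis on strictly smaller games to recover a genuine $G^R\geq H^R$. Getting the bookkeeping of this option-chase right — ensuring every application of the inductive hypothesis is to a pair of strictly smaller Left dead ends and that the pocancellativity of $J$ (or of its relevant option) is available — is the crux of the argument and the step I would write out most carefully.
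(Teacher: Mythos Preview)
Your approach rests on a misreading of the hypothesis: $J$ is \emph{not} assumed to be a Left dead end, only a pocancellative element of the universe $\mathcal{U}$. Hence $G+J$ and $H+J$ need not be Left dead ends, and the simplified comparison test (\cref{thm:reduced}) does not apply to them; your entire inductive framework collapses at the first step. Even if one granted that $J$ were a Left dead end, the ``troublesome second case'' you flag---where the dominating option moves in $J$ rather than in $G$---is not actually resolved: you would need pocancellativity of $J^R$, which is not given, and you also invoke $H\geq 0$ for Left dead ends without justification in full mis\`ere.

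The paper's argument avoids option-chasing entirely and is essentially a two-liner exploiting the multiversality of the order on Left dead ends (\cref{cor:multiversal}). From $G+J\geq H+J$ (modulo $\mathcal{M}$) one passes to $G+J\geq_\mathcal{U}H+J$ (the trivial direction, since $\mathcal{U}\subseteq\mathcal{M}$); pocancellativity of $J$ in $\mathcal{U}$ then yields $G\geq_\mathcal{U}H$; and since $G$ and $H$ are Left dead ends, \cref{cor:multiversal} lifts this back to $G\geq H$ in every universe, in particular in $\mathcal{M}$. The whole point of the lemma is precisely that the pocancellation happens \emph{inside} $\mathcal{U}$, where $J$'s pocancellativity is assumed, and multiversality transports the conclusion back out.
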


\begin{proof}
    Observe that $G+J\geq H+J$ implies $G+J\geq_\mathcal{U}H+J$ by
    \cref{cor:multiversal}. Since $J$ is pocancellative in $\mathcal{U}$, it
    follows that $G\geq_\mathcal{U}H$, yielding the result via
    \cref{cor:multiversal}.
\end{proof}

It is not necessary for us here, but an analogous lemma holds where
pocancellativity is replaced with cancellativity, whose proof is almost
identical. We simply state it below.

\begin{lemma}
    If $\mathcal{U}$ is a universe, $G,H\in\mathcal{U}$ are Left dead ends, and
    $J$ is a cancellative element of $\mathcal{U}$ satisfying $G+J=H+J$, then
    $G=H$.
\end{lemma}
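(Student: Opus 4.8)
The plan is to mimic the proof of \cref{lem:pocancellative} almost verbatim, replacing the order relation `$\geq$' with equality `$=$' throughout. The statement we wish to prove asserts that if $G,H\in\mathcal{U}$ are Left dead ends, $J$ is a cancellative element of $\mathcal{U}$, and $G+J=H+J$, then $G=H$. Since equality modulo $\mathcal{M}$ is what `$=$' denotes, and since the hypothesis already lives inside the universe $\mathcal{U}$, the main ingredient will again be \cref{cor:multiversal}, which lets us freely pass between $=$ (equality modulo $\mathcal{M}$) and $=_\mathcal{U}$ for Left dead ends.

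First I would observe that $G+J=H+J$ implies $G+J=_\mathcal{U}H+J$: this follows from \cref{cor:multiversal}, applied to each of the two inequalities $G+J\geq H+J$ and $H+J\geq G+J$ comprising the equality (noting that $G+J$ and $H+J$ are themselves Left dead ends, being sums of Left dead ends, so the corollary applies). Next I would invoke the hypothesis that $J$ is cancellative in $\mathcal{U}$: from $G+J=_\mathcal{U}H+J$ we immediately obtain $G=_\mathcal{U}H$. Finally, I would apply \cref{cor:multiversal} once more---again to both inequalities constituting $G=_\mathcal{U}H$---to conclude $G=H$ (equality modulo $\mathcal{M}$), which is the desired result.

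I do not expect any genuine obstacle here, as the authors themselves flag that ``the proof is almost identical'' to that of \cref{lem:pocancellative}. The only point requiring a sliver of care is that cancellativity is phrased as a single equational implication rather than as a pair of order implications, so when translating through \cref{cor:multiversal} one must remember to treat the equality $G+J=H+J$ as the conjunction of two `$\geq$' statements in order to apply the (order-theoretic) corollary, and likewise when descending from $G=_\mathcal{U}H$ back to $G=H$. Since both $G+J$ and $H+J$ are Left dead ends (the sum of two Left dead ends is again a Left dead end, as every subposition remains a Left end), \cref{cor:multiversal} is applicable at each step, and the argument goes through without incident.
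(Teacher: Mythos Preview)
Your overall shape matches the paper's approach exactly, but there is a genuine error in the justification. You write that ``$G+J$ and $H+J$ are themselves Left dead ends, being sums of Left dead ends,'' and repeat at the end that ``the sum of two Left dead ends is again a Left dead end.'' However, nothing in the hypothesis says $J$ is a Left dead end: $J$ is merely assumed to be a cancellative element of $\mathcal{U}$. So $G+J$ need not be a Left dead end, and \cref{cor:multiversal} does not apply to it directly.

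Fortunately this does not sink the argument, because the first implication you want---that $G+J=H+J$ (modulo $\mathcal{M}$) entails $G+J=_\mathcal{U}H+J$---holds for the trivial reason that $\mathcal{U}\subseteq\mathcal{M}$: any pair of games equal modulo the full mis\`ere universe are \emph{a fortiori} equal modulo any subuniverse, with no appeal to \cref{cor:multiversal} needed. Your second step (cancelling $J$ in $\mathcal{U}$) is fine as stated, and your final step (promoting $G=_\mathcal{U}H$ to $G=H$ via \cref{cor:multiversal}) is correct because $G$ and $H$ themselves \emph{are} Left dead ends. So: fix the justification for the first implication, drop the false claim about $G+J$ being a Left dead end, and the proof goes through.
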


Recalling a result of Milley and Renault \cite[Theorem 6 on
p.~2226]{milley.renault:dead} that says Left dead ends are invertible in
$\mathcal{E}$ (the dead ending universe), we now have the tools to show that
our monoid is pocancellative.

\begin{theorem}
    \label{thm:pocancellative}
    The monoid of Left dead ends $\mathcal{L}$ is pocancellative.
\end{theorem}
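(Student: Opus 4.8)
The plan is to deduce pocancellativity of $\mathcal{L}$ as an immediate application of \cref{lem:pocancellative}, choosing the ambient universe to be the dead ending universe $\mathcal{E}$. Unwinding the definition, I must show that for all $G,H,J\in\mathcal{L}$ the implication $G+J\geq H+J\implies G\geq H$ holds. Since $\mathcal{L}\subseteq\mathcal{E}$, the elements $G$, $H$, and $J$ all lie in $\mathcal{E}$, and \cref{lem:pocancellative} (taking $\mathcal{U}=\mathcal{E}$) reduces the whole problem to a single claim: that every Left dead end $J$ is a \emph{pocancellative element} of $\mathcal{E}$.

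To establish that claim, I would invoke the result of Milley and Renault \cite[Theorem 6 on p.~2226]{milley.renault:dead}, which tells us that every Left dead end is invertible in $\mathcal{E}$. So, given $J\in\mathcal{L}$, there is some $J'\in\mathcal{E}$ with $J+J'=0$ (indeed $J'=\overline{J}$). Invertibility immediately yields pocancellativity in any pomonoid: if $a+J\geq_{\mathcal{E}}b+J$ for some $a,b\in\mathcal{E}$, then adding $J'$ to both sides and using the compatibility of $\geq_{\mathcal{E}}$ with the disjunctive sum gives
\[
    a=a+J+J'\geq_{\mathcal{E}}b+J+J'=b,
\]
so that $a\geq_{\mathcal{E}}b$. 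Hence $J$ is a pocancellative element of $\mathcal{E}$, as required.

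Combining the two observations finishes the argument: for $G,H,J\in\mathcal{L}$ with $G+J\geq H+J$, the element $J$ is pocancellative in $\mathcal{E}$ by the previous paragraph, and \cref{lem:pocancellative} then delivers $G\geq H$. Since $G$, $H$, and the sums $G+J$, $H+J$ are all Left dead ends, \cref{cor:multiversal} guarantees that the relation $\geq$ appearing here (understood modulo $\mathcal{M}$) coincides with $\geq_{\mathcal{E}}$, so there is no ambiguity in which order is being cancelled. I do not anticipate a genuine obstacle: the only real decision is to work in $\mathcal{E}$ rather than in $\mathcal{M}$ or $\mathcal{D}$, precisely because Left dead ends fail to be invertible in the latter two but are invertible in $\mathcal{E}$; once that universe is fixed, the proof is a short, formal consequence of invertibility together with the two cited results.
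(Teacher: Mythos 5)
Your argument is correct and is essentially the paper's own proof: the paper likewise applies \cref{lem:pocancellative} with $\mathcal{U}=\mathcal{E}$ after remarking that Left dead ends are invertible, and hence pocancellative, in $\mathcal{E}$ by Milley and Renault's result. You merely spell out the standard step that invertibility implies pocancellativity in a pomonoid, which the paper leaves implicit.
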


\begin{proof}
    The result follows immediately from \cref{lem:pocancellative} after
    remarking that Left dead ends are invertible, and hence pocancellative, in
    $\mathcal{E}$.
\end{proof}

While we have used the invertibility of Left dead ends in $\mathcal{E}$ to
apply \cref{lem:pocancellative}, it is unclear how, in an arbitrary universe,
pocancellativity differs from invertibility (the latter, of course, always
implies the former). See \cref{app:pocancellativity} for a brief discussion.

\subsection{The group of differences}

We take a very brief detour to prove a statement about the group of differences
of our monoid. Recall that, given a commutative monoid, we can always construct
an abelian group by simply introducing a formal inverse for each element; this
group is called the \emph{group of differences}, or the \emph{Grothendiek
group}, of the monoid. Since our monoid of Left dead ends is also cancellative
(\cref{thm:pocancellative}), we in fact know that it must embed in its group of
differences.

\begin{proposition}
    \label{prop:grothendiek}
    If $G$ and $H$ are Left dead ends with $n\cdot G=n\cdot H$ for some $n>0$,
    then $G=H$.
\end{proposition}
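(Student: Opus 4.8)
The plan is to deduce the statement from its order-theoretic shadow: it suffices to prove that, for Left dead ends $G$ and $H$ and any $n>0$, the implication $n\cdot G\geq n\cdot H\implies G\geq H$ holds. Granting this, from $n\cdot G=n\cdot H$ I read off both $n\cdot G\geq n\cdot H$ and $n\cdot H\geq n\cdot G$, whence $G\geq H$ and $H\geq G$, and antisymmetry of the partial order delivers $G=H$. In the language preceding the statement this is exactly the assertion that the group of differences of $\mathcal{L}$ is torsion-free: since $\mathcal{L}$ is cancellative by \cref{thm:pocancellative}, it embeds in that group, and $n\cdot G=n\cdot H$ translates into $n(g-h)=0$ for the images $g,h$, so torsion-freeness is equivalent to $g=h$.

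First I would dispose of the degenerate case. If $G\cong0$ then $0=n\cdot G\geq n\cdot H$; since $0\geq K$ forces $K\cong0$ for a Left dead end $K$ (immediate from \cref{thm:reduced}), we get $n\cdot H\cong0$, and the fact that $\mathcal{L}$ is reduced then forces $H\cong0$ as well, so $G\geq H$ trivially. Hence I may assume $G,H\not\cong0$ and establish $G\geq H$ by induction on formal birthday, appealing to the comparison test \cref{thm:reduced}: it is enough to show that every $G^R\in G^{\mathcal{R}}$ dominates some $H^R\in H^{\mathcal{R}}$. Fixing such a $G^R$, the game $G^R+(n-1)\cdot G$ is a Right option of $n\cdot G$, so the comparison test applied to $n\cdot G\geq n\cdot H$ hands me a Right option $H^R+(n-1)\cdot H$ of $n\cdot H$ with
\[
    G^R+(n-1)\cdot G\geq H^R+(n-1)\cdot H.
\]

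The hard part will be extracting $G^R\geq H^R$ from this inequality. Pocancellativity (\cref{thm:pocancellative}) is precisely the tool one wants for cancelling a common summand, but the two sides here carry the \emph{different} tails $(n-1)\cdot G$ and $(n-1)\cdot H$, so it does not apply directly; this mismatch is the crux. To resolve it I would pass to the dead-ending universe $\mathcal{E}$, where every Left dead end is invertible, so that $\mathcal{L}$ sits inside the partially ordered abelian group $\mathcal{E}^{\times}$ with the ordering unchanged by \cref{cor:multiversal}. There the target implication becomes the statement that the positive cone of this group is \emph{unperforated}, i.e.\ $n\cdot x\geq0\implies x\geq0$ for $x=g-h$ a difference of Left dead ends, and the available inverses let me isolate a single component of a sum. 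I expect to prove this unperforation by a second induction on formal birthday driven again by \cref{thm:reduced} and \cref{cor:move-runs}; alternatively, one can attempt to match the \emph{good} options of $n\cdot G$ and $n\cdot H$ via \cref{cor:good-option-replies} together with the additivity of longest-run length on $\mathcal{L}$, thereby reducing the equality of the two large sums to the equality of their good options and hence, by induction, to $G=H$. On either route the genuine obstacle is the same: the failure of naive cancellation once the tails differ.
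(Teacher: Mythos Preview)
Your proposal correctly identifies the crux but does not resolve it. You arrive at
\[
    G^R+(n-1)\cdot G\;\geq\;H^R+(n-1)\cdot H
\]
and then concede that cancelling the mismatched tails is the hard part; the two routes you sketch (unperforation in $\mathcal{E}^\times$, or matching good options via longest-run lengths) are left as expectations rather than arguments. In fact, by strengthening the target to the order-theoretic statement $n\cdot G\geq n\cdot H\implies G\geq H$, you have made the problem harder: that implication is unperforation of the positive cone, which is strictly stronger than torsion-freeness, and the paper's method does not establish it.

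The paper stays with the \emph{equality} $n\cdot G=n\cdot H$ throughout, and this is essential. Using \cref{cor:good-option-replies} in its equality form, one obtains options $G'$ of $G$ and $H'$ of $H$ with
\[
    G'+(n-1)\cdot G\;=\;H'+(n-1)\cdot H.
\]
Now comes the trick you are missing: multiply this equation by $n$. The tails become $n(n-1)\cdot G$ and $n(n-1)\cdot H$, and these are \emph{equal} by hypothesis, so ordinary cancellativity (\cref{thm:pocancellative}) applies directly and yields $n\cdot G'=n\cdot H'$. Induction on birthday then gives $G'=H'$; substituting back gives $(n-1)\cdot G=(n-1)\cdot H$; and a second induction, on $n$, finishes. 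This multiply-by-$n$ manoeuvre is precisely what converts the mismatched tails into a common summand, and it is unavailable once you have weakened equality to inequality.
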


\begin{proof}
    If $n=1$, then the result is immediate, so assume now that $n>1$.

    If $G=0$, then $n\cdot0=0=H$ yields the result immediately. Otherwise, by
    \cref{cor:good-option-replies}, there exist options $G'$ and $H'$ such that
    \begin{equation}
        \tag{1}
        \label{eq:temp}
        G'+(n-1)\cdot G=H'+(n-1)\cdot H.
    \end{equation}
    Multiplying by $n$ yields
    \[
        n\cdot G'+n(n-1)\cdot G=n\cdot H'+n(n-1)\cdot H.
    \]
    Since $n\cdot G=n\cdot H$ by hypothesis, it is clear that $n(n-1)\cdot
    G=n(n-1)\cdot H$, and hence it follows by cancellativity
    (\cref{thm:pocancellative}) that $n\cdot G'=n\cdot H'$. By induction, we
    have $G'=H'$. It now follows from \cref{eq:temp} that $(n-1)\cdot
    G=(n-1)\cdot H$, from which it follows by induction again that $G=H$, which
    is the result.
\end{proof}

This result, and indeed its proof, is very similar to a result of Siegel's in
the impartial mis\`ere monoid; in particular, see \cite[Theorem 29 on
p.~16]{siegel:impartial} for Siegel's result, and
\cref{app:impartial-similarity} for a further discussion on the similarities
between our two monoids.

\begin{corollary}
    The Grothendiek group (i.e.\ the group of differences) of the monoid of
    Left dead ends is torsion free.
\end{corollary}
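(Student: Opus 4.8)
The plan is to show that the Grothendieck group $\mathcal{G}$ of $\mathcal{L}$ has no nonzero torsion element. An element of $\mathcal{G}$ is a formal difference $[G]-[H]$ with $G,H\in\mathcal{L}$, and two such differences $[G]-[H]$ and $[G_1]-[H_1]$ coincide in $\mathcal{G}$ precisely when $G+H_1=H+G_1$ in $\mathcal{L}$; this is the standard description of the group of differences of a commutative \emph{cancellative} monoid, and cancellativity is exactly what \cref{thm:pocancellative} grants us. Torsion freeness means that whenever $n\cdot x=0$ in $\mathcal{G}$ for some $n>0$, we already have $x=0$.

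\medskip

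So first I would take an arbitrary torsion element $x=[G]-[H]\in\mathcal{G}$ with $n\cdot x=0$ for some $n>0$, and translate this into a statement purely inside $\mathcal{L}$. Writing out $n\cdot([G]-[H])=0$ using the above description of equality in the group of differences gives $n\cdot G=n\cdot H$ in $\mathcal{L}$ (the zero of the group corresponds to the pair where both coordinates agree). Here I would lean on the fact that $\mathcal{L}$ embeds in $\mathcal{G}$, which the surrounding text has already noted follows from cancellativity, so that the relation $n\cdot G=n\cdot H$ holds genuinely in $\mathcal{L}$ and not merely formally.

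\medskip

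Next I would invoke \cref{prop:grothendiek} directly: it states that $n\cdot G=n\cdot H$ with $n>0$ forces $G=H$. Hence $[G]=[H]$ in $\mathcal{G}$, which means $x=[G]-[H]=0$. Since $x$ was an arbitrary torsion element, the only torsion element is $0$, and therefore $\mathcal{G}$ is torsion free.

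\medskip

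I do not anticipate a genuine obstacle here, as this corollary is essentially a repackaging of \cref{prop:grothendiek}: the proposition is precisely the torsion-freeness statement phrased inside the monoid rather than inside its group of differences. The only point requiring any care is the bookkeeping translating between equalities of formal differences in $\mathcal{G}$ and equalities in $\mathcal{L}$, which is clean because cancellativity guarantees the canonical map $\mathcal{L}\to\mathcal{G}$ is injective and that $[G]=[H]$ in $\mathcal{G}$ if and only if $G=H$ in $\mathcal{L}$. So the ``hard part'' is really just citing the correct prior result and being precise about the definition of torsion freeness in a group that is written additively.
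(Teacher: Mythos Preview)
Your proof is correct and follows exactly the same approach as the paper: take a torsion element $G-H$, translate $n\cdot(G-H)=0$ to $n\cdot G=n\cdot H$ in $\mathcal{L}$, apply \cref{prop:grothendiek} to get $G=H$, and conclude. You are simply more explicit about the Grothendieck-group bookkeeping than the paper, which dispatches the argument in two lines.
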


\begin{proof}
    Write $\mathcal{D}$ for the Grothendiek group. Let $G-H\in\mathcal{D}$, and
    suppose that $n\cdot(G-H)=0$ for some $n$. Then $n\cdot G=n\cdot H$, and we
    have by \cref{prop:grothendiek} that $G=H$, and hence $G-H=0$.
\end{proof}

As such, the Grothendiek group of our monoid of Left dead-ends is necessarily a
(countable) torsion-free abelian group of countably infinite rank.

For the impartial mis\`ere monoid, Siegel proved that $*$ is the only torsion
element of the group of differences \cite[Corollary 30 on
p.~16]{siegel:impartial}. As in the impartial case, we leave open the following
problem.

\begin{problem}
    What is the isomorphism type of the group of differences of the monoid of
    Left dead ends?
\end{problem}

\subsection{Terminal lengths}

As it stands, we have shown that our commutative pomonoid $\mathcal{L}$ is
reduced and pocancellative. Before we show that it is also an FF-monoid, we
first develop some useful notions.

\begin{definition}
    If $G$ is a combinatorial game and $(G_0,G_1,\dots,G_k)$ is a run of $G$
    such that $G_k\cong0$, then we say the run is \emph{terminal}.
\end{definition}

This is not to be confused with a \emph{play} (cf.\ \cite[Definition 1.4 on
p.~9]{siegel:combinatorial}); we are not requiring the run to be alternating,
and we are requiring it to terminate at 0 rather than there just being no
options for the player whose turn it is.

\begin{definition}
    \label{def:terminal-lengths}
    If $G$ is a short game, then we define the set of \emph{terminal lengths}
    of $G$, denoted $\terminal(G)$, as the set of all $n$ such that there
    exists a terminal run of $G$ of length $n$.
\end{definition}

Note that the set of terminal lengths is defined on the strict form of $G$, and
not on its equivalence class modulo equality. For Left dead ends, however, we
will soon see in \cref{thm:subset-terminal-lengths} that it is indeed
well-defined up to equality. It is useful to remark that no game can have empty
terminal lengths; the empty game satisfies $\terminal(0)=\{0\}$.

Of course, we could have also defined terminal lengths recursively in the
following way, which will also provide a useful viewpoint. This is entirely
analogous to how one could define the formal birthday $\tilde{\birth}(G)$ of a
short game $G$ either as the height of the game tree, or recursively as one
more than the maximum formal birthday among its options (with the base case
$\tilde{\birth}(0)\coloneq0$). Recall that we are using Minkowski sums (sumset
notation) here.

\begin{lemma}
    \label{lem:terminal-subset}
    If $G$ is a non-zero Left dead end, then
    \[
        \terminal(G)=\bigcup(\terminal(G')+1),
    \]
    where $G'$ ranges over all options of $G$. In particular, it holds that
    $\terminal(G')\subseteq\terminal(G)-1$ for every option $G'$ of $G$.
\end{lemma}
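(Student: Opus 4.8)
The plan is to establish the set equality by a direct correspondence between terminal runs of $G$ and pairs consisting of an option $G'$ of $G$ together with a terminal run of that option, mirroring exactly the recursive structure of the formal birthday that the surrounding text highlights. The one preliminary observation that makes everything work is this: since $G$ is a non-zero Left dead end it is in particular a non-zero Left end, so it possesses at least one (Right) option; and because $G\not\cong0$, no terminal run of $G$ can have length $0$ (such a run would force $G_0\cong G\cong0$). Hence every terminal run $(G_0,\dots,G_k)$ of $G$ has $k\geq1$, which is precisely what lets the leading step of any such run be peeled off or, conversely, prepended.

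For the inclusion $\terminal(G)\subseteq\bigcup(\terminal(G')+1)$, I would take $n\in\terminal(G)$ together with a witnessing terminal run $(G_0,G_1,\dots,G_n)$, so that $G_0\cong G$ and $G_n\cong0$. By the observation above $n\geq1$, so $G_1$ is a genuine option of $G$; writing $G'\cong G_1$, the truncation $(G_1,\dots,G_n)$ is a run of $G'$ of length $n-1$ that is again terminal since $G_n\cong0$. Thus $n-1\in\terminal(G')$ and $n\in\terminal(G')+1$, which sits inside the claimed union.

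For the reverse inclusion I would simply reverse this construction. Given any option $G'$ of $G$ and any $m\in\terminal(G')$ witnessed by a terminal run $(H_0,\dots,H_m)$ with $H_0\cong G'$ and $H_m\cong0$, prepending $G$ produces the sequence $(G,H_0,\dots,H_m)$, which is a run of $G$ of length $m+1$ (as $H_0\cong G'$ is an option of $G$) and is terminal because its final entry is $H_m\cong0$. Hence $m+1\in\terminal(G)$, giving $\terminal(G')+1\subseteq\terminal(G)$ for every option $G'$ and therefore the union inclusion. Combining the two directions yields the asserted equality, and the \emph{in particular} clause follows at once: from $\terminal(G')+1\subseteq\terminal(G)$ we subtract $1$ to obtain $\terminal(G')\subseteq\terminal(G)-1$.

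There is no serious obstacle here—the argument is a pure unwinding of the definitions of run, terminal run, and terminal length—so the only point genuinely requiring care is the index bookkeeping, and in particular recording why a non-zero game admits no terminal run of length $0$, so that the leading option $G_1$ is always available to strip off in one direction and to prepend in the other.
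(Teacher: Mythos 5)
Your argument is correct and is essentially the paper's proof: the paper dispatches the lemma in one line by observing that $(G_0,G_1,\dots,G_n)$ is a run of $G$ if and only if $G_1$ is an option of $G_0$ and $(G_1,\dots,G_n)$ is a run of $G_1$, which is exactly the peel-off/prepend correspondence you spell out. Your explicit remark that non-zeroness rules out terminal runs of length $0$ is a detail the paper leaves implicit but is the right thing to note.
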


\begin{proof}
    Observing that $(G_0,G_1,\dots,G_n)$ is a run of $G$ if and only if $G_1$
    is an option of $G_0$ and $(G_1,\dots,G_n)$ is a run of $G_1$ yields the
    result immediately.
\end{proof}

For any short game $G$, we clearly have the identity
$\tilde\birth(G)=\max(\terminal(G))$. We introduce an opposing notion that will
oft be useful in our study of Left dead ends: the \emph{race} of a game, which
measures how quickly one can \emph{race} to the finish.\footnote{It is
    interesting to compare this idea with \cite[Definition 2 on
    p.~2225]{milley.renault:dead}.
}

\begin{definition}
    \label{def:race}
    If $G$ is a short game, define the \emph{race} of $G$, denoted $\race(G)$,
    as
    \[
        \race(G)\coloneq\min(\terminal(G)).
    \]
\end{definition}

Of course, we could have defined $\race(G)$ recursively as one more than the
minimum race amongst its options (with the base case $\race(0)\coloneq0$), and
the reader is invited to confirm that this is equivalent to the definition
given above. For a brief discussion of why we have not created a split for race
and \emph{formal} race, like there is for birthday and formal birthday, see
\cref{app:formal-race}.

We will say that a Left dead end is an \emph{integer} if it is isomorphic
either to $0\coloneq\{\cdot\mid\cdot\}$ or to
$\overline{n}\coloneq\{\cdot\mid\overline{n-1}\}$ for some $n\geq1$ (where
$\overline{0}\coloneq0$). It is then clear that
$\race(\overline{n})=n=\tilde{\birth}(\overline{n})$. The following lemma
captures this observation and will prove useful later.

\begin{lemma}
    \label{lem:integer}
    A Left dead end $G$ is an integer if and only if $|\terminal(G)|=1$. In
    particular, $G=\overline{n}$ if and only if $\terminal(G)=\{n\}$.
\end{lemma}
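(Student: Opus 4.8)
The plan is to prove the sharper ``in particular'' statement, that $G=\overline{n}$ if and only if $\terminal(G)=\{n\}$, from which the first claim is immediate: a Left dead end is an integer exactly when it equals some $\overline{n}$, and its set of terminal lengths is a singleton exactly when that singleton is $\{n\}$ for the corresponding $n$. Both directions will go by induction on the formal birthday $\tilde\birth(G)$, with \cref{lem:terminal-subset} doing the recursive bookkeeping and the base case $G\cong0=\overline{0}$, where $\terminal(0)=\{0\}$, handled directly.

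For the reverse implication I would suppose $\terminal(G)=\{n\}$ with $n\geq1$, so that $G$ is non-zero. By \cref{lem:terminal-subset} we have $\terminal(G)=\bigcup_{G'}(\terminal(G')+1)$ as $G'$ ranges over the options of $G$; since each $\terminal(G')$ is non-empty and $\terminal(G')+1\subseteq\{n\}$, every option satisfies $\terminal(G')=\{n-1\}$. The induction hypothesis then gives $G'=\overline{n-1}$ for every option, so all Right options of $G$ are equal, whence $G=\{\cdot\mid\overline{n-1}\}=\overline{n}$.

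The forward implication is the delicate one, because terminal lengths are computed on the form of $G$, so I must control the options of an \emph{arbitrary} form equal to $\overline{n}$, not merely its canonical form. The key fact, which I would isolate as a short sub-step, is that $\overline{n}$ is maximal among Left dead ends: using \cref{thm:reduced} one shows by induction that $X\geq\overline{m}$ forces $X=\overline{m}$, the base case being that the only Left dead end with $X\geq0$ is $0$ itself, since $0$ has no Right option with which to answer. Granting this, \cref{cor:good-option-replies} applied with $G=\overline{n}$ shows that every good option of $G$ equals $\overline{n-1}$; and any dominated option would have to strictly exceed a good option, hence strictly exceed $\overline{n-1}$, contradicting maximality. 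Thus every option of $G$ equals $\overline{n-1}$, the induction hypothesis yields $\terminal(G')=\{n-1\}$ for each, and \cref{lem:terminal-subset} gives $\terminal(G)=\{n-1\}+1=\{n\}$.

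I expect the maximality sub-step to be the main obstacle. It is precisely what rules out ``padded'' forms of $\overline{n}$ that could carry spurious terminal runs of differing lengths, and it is the only place where the comparison test \cref{thm:reduced} (rather than the purely combinatorial \cref{lem:terminal-subset}) is genuinely needed; everything else is routine induction.
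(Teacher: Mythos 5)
Your proof is correct, and your backward direction ($\terminal(G)=\{n\}$ forces $G=\overline{n}$) is essentially identical to the paper's: pass to the options via \cref{lem:terminal-subset}, apply induction, and reassemble. Where you genuinely diverge is the forward direction. The paper simply computes the unique terminal run of the literal form $\overline{n}$, which settles the claim for games isomorphic to an integer, and quietly defers the question of whether $\terminal$ is invariant under equality (for arbitrary forms $G$ with $G=\overline{n}$) to the later \cref{thm:subset-terminal-lengths}. You instead prove the invariance directly at this stage, by first establishing that $\overline{m}$ is maximal among Left dead ends ($X\geq\overline{m}$ forces $X=\overline{m}$) and then using \cref{cor:good-option-replies} to pin down every option of an arbitrary form equal to $\overline{n}$. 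That maximality fact is correct (your induction via \cref{thm:reduced} goes through, and the domination bookkeeping is right: a non-good Right option strictly exceeds some good one), and it is a clean special case of machinery the paper only develops later (\cref{cor:comparable-race-birth}, \cref{prop:flex-1-minimal}). So your argument is somewhat longer but self-contained and strictly stronger at this point in the development; the paper's is shorter because it is content to read the first claim at the level of forms. The only glib step is your remark that the first claim is ``immediate'' from the second: since \emph{integer} is defined up to isomorphism while ``$=$'' is equivalence modulo the universe, one should note that your forward-direction analysis (every option of a form equal to $\overline{n}$ equals, and recursively is isomorphic to, $\overline{n-1}$) is exactly what closes that gap; this is harmless, and the paper elides the same point.
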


\begin{proof}
    The only terminal run of $\overline{n}$ is
    $(\overline{n},\overline{n-1},\dots,0)$, which has length $n$. Thus, if $G$
    is an integer, then $|\terminal(G)|=1$.

    Now suppose instead that $|\terminal(G)|=1$; say $\terminal(G)=\{n\}$. If
    $n=0$, then $G=0$. Otherwise, by \cref{lem:terminal-subset}, every option
    $G'$ of $G$ satisfies $\terminal(G')=\{n-1\}$. Hence $G'=\overline{n-1}$ by
    induction, yielding that $G=\overline{n}$.
\end{proof}

Another family of Left dead ends that will be important to us is the class of
\emph{waiting games} (see \cite[Definition 3 on
p.~9]{larsson.milley.ea:recursive}, but note the different name and notation):
we write $W_n\coloneq\{\cdot\mid0,W_{n-1}\}$ for the waiting game of rank $n$
for every $n>0$, with $W_0\coloneq0$. It is a simple observation that
$\terminal(W_n)=\{t\in\mathbb{N}:1\leq t\leq n\}$ for every $n>0$.

To confirm the reader's understanding up to this point, they may wish to verify
that the set of terminal lengths of $G=\{\cdot\mid W_2,\overline{3}\}$ is the
set $\terminal(G)=\{2,3,4\}$. In particular, $\race(G)=2$ and
$\tilde{\birth}(G)=4$.

We state some obvious properties of terminal lengths relating to 0 that we will
refer back to later.

\begin{lemma}
    \label{lem:0-props}
    If $G$ is a Left dead end, then the following are equivalent:
    \begin{multicols}{3}
        \begin{enumerate}
            \item
                $G=0$;
            \item
                $G\cong0$;
            \item
                $0\in\terminal(G)$;
            \item
                $\race(G)=0$;
            \item
                $\birth(G)=0$.
        \end{enumerate}
    \end{multicols}
\end{lemma}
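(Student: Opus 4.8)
The plan is to treat condition~(2), $G\cong0$, as a hub and connect it to each of the other four conditions, since the only genuinely non-formal point here is the collapse of isomorphism and equality, while everything else is a matter of unwinding definitions. Concretely, I would prove (1)$\iff$(2) directly, then (2)$\iff$(3)$\iff$(4) from the definitions of terminal lengths and race, and finally (2)$\iff$(5) via birthday considerations.

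First I would dispatch (1)$\iff$(2) by invoking the Mesdal--Ottaway result already quoted in the proof that $\mathcal{L}$ is reduced: a partizan form equals $0$ modulo $\mathcal{M}$ if and only if it is isomorphic to $0$. This is the one step that is not purely formal, and it is the crux that lets the equality-based conditions (1) and (5) be freely interchanged with the form-based conditions (2), (3), (4).

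For the middle cluster, I would argue (2)$\Rightarrow$(3)$\Rightarrow$(4)$\Rightarrow$(2). If $G\cong0$, then the single-term sequence $(G)$ is a terminal run of length $0$, so $0\in\terminal(G)$. Since terminal lengths are non-negative and $\race(G)=\min(\terminal(G))$ by \cref{def:race}, the presence of $0$ in $\terminal(G)$ forces $\race(G)=0$, giving (3)$\Rightarrow$(4); and conversely $\race(G)=0$ immediately returns $0\in\terminal(G)$, so in fact (3)$\iff$(4). To close the loop with (4)$\Rightarrow$(2), I would observe that a terminal run of length $0$ is a sequence $(G_0)$ with $G_0\cong G$ and (by terminality) $G_0\cong0$, whence $G\cong0$.

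For condition~(5), I would use (2)$\Rightarrow$(5)$\Rightarrow$(1). If $G\cong0$ then $\tilde\birth(G)=0$, and since $\birth(G)\leq\tilde\birth(G)$ we get $\birth(G)=0$. Conversely, $\birth(G)=0$ means $G$ is equal to some form of formal birthday $0$; the only such form is $0$, so $G=0$, which is~(1). I do not anticipate any real obstacle in this lemma, as every implication is immediate once the definitions are in hand; the only thing requiring care is keeping the distinction between $\cong$ and $=$ explicit and stating the length-$0$ terminal run argument correctly, since that is where the two notions meet.
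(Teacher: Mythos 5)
Your proposal is correct and follows essentially the same route as the paper: the Mesdal--Ottaway result handles the equivalence of $G=0$ and $G\cong0$, the equivalences with $0\in\terminal(G)$ and $\race(G)=0$ are definitional unwindings of terminal runs of length $0$, and the birthday condition is dispatched by the observation that the only form of (formal) birthday $0$ is the empty game. The only difference is organisational: the paper treats (1)$\iff$(5) as trivial directly rather than routing through (2), but this is immaterial.
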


\begin{proof}
    We have already seen that (1) and (2) are equivalent by a result of Mesdal
    and Ottaway \cite[Theorem 7 on p.~5]{mesdal.ottaway:simplification}.
    Additionally, it is trivial that (1) and (5) are equivalent. Clearly
    $(2)\implies(3)$ and $(4)$. Then, $(3)\implies(4)$ by definition of race.

    To complete the proof, it suffices to show that $(4)\implies(2)$. But if
    $\race(G)=0$, then no moves can be made, and so immediately we have
    $G\cong0$.
\end{proof}

We now prove a powerful and surprising implication of two Left dead ends being
comparable, namely that their terminal lengths are also comparable (by
inclusion).

\begin{theorem}
    \label{thm:subset-terminal-lengths}
    If $G$ and $H$ are Left dead ends with $G\geq H$, then
    $\terminal(G)\subseteq\terminal(H)$.
\end{theorem}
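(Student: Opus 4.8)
The plan is to unfold the definition of terminal lengths into terminal runs and then transport runs across the inequality using \cref{cor:move-runs}. Recall that $n \in \terminal(G)$ means precisely that $G$ admits a terminal run $(G_0, G_1, \dots, G_n)$, i.e.\ a run with $G_0 \cong G$ and $G_n \cong 0$. So fix such an $n$ and such a run. Since $\tilde\birth(G) = \max(\terminal(G))$, we have $0 \le n \le \tilde\birth(G)$, which is exactly the hypothesis needed to invoke \cref{cor:move-runs}. Applying it produces a run $(H_0, H_1, \dots, H_n)$ of $H$ of the same length $n$ with $G_i \ge H_i$ for every $0 \le i \le n$.

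It remains to verify that this mirrored run is terminal, that is, that $H_n \cong 0$. Here the ``dominating'' endpoint $G_n \cong 0$ must force its partner to also be $0$; this is delivered by condition~(1) of \cref{thm:reduced}, since $G_n \ge H_n$ together with $G_n \cong 0$ gives $H_n \cong 0$ immediately (equivalently, one may cite \cref{lem:0-props}). Thus $(H_0, \dots, H_n)$ is a terminal run of $H$ of length $n$, so $n \in \terminal(H)$. As $n \in \terminal(G)$ was arbitrary, we conclude $\terminal(G) \subseteq \terminal(H)$.

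The step I expect to carry the real content is the propagation of terminality: \cref{cor:move-runs} on its own only guarantees a run of $H$ with $H_n \le G_n$, not one ending at $0$, so the whole argument hinges on the rigidity of comparison with $0$ for Left dead ends. Everything else (the length bound $n \le \tilde\birth(G)$, the matching of lengths) is bookkeeping. As a self-contained alternative that avoids \cref{cor:move-runs}, I would run an induction on the formal birthday of $G$: the case $G \cong 0$ is immediate via condition~(1) of \cref{thm:reduced}; and for $G \not\cong 0$ one takes $n \in \terminal(G)$ (necessarily $n \ge 1$ by \cref{lem:0-props}), uses \cref{lem:terminal-subset} to find an option $G'$ with $n-1 \in \terminal(G')$, applies condition~(2) of \cref{thm:reduced} to obtain an option $H'$ of $H$ with $G' \ge H'$, invokes the inductive hypothesis to get $n-1 \in \terminal(H')$, and finally reassembles via \cref{lem:terminal-subset} to land $n \in \terminal(H)$ (noting $H \not\cong 0$, since $G$ has a Right option that condition~(2) forces $H$ to answer).
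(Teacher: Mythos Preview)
Your proposal is correct. Your primary route via \cref{cor:move-runs} is a clean packaging of exactly the induction the paper performs directly: the paper inducts on the formal birthday, using condition~(2) of \cref{thm:reduced} to find matching options and \cref{lem:terminal-subset} to reassemble terminal lengths, which is precisely your ``self-contained alternative''; your first argument simply invokes \cref{cor:move-runs} (itself proved by that same induction) to transport the whole run at once and then applies condition~(1) of \cref{thm:reduced} at the endpoint. So the two are the same proof at different levels of abstraction. It may also interest you that the paper records a genuinely different second proof in \cref{app:alt-proof}, constructing an explicit dicot distinguishing game $C_n$ witnessing $G\not\geq H$ whenever some $n\in\terminal(G)\setminus\terminal(H)$.
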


\begin{proof}
    If $G\cong0$, then $H\cong0$ by \cref{thm:reduced}, and we have the result.
    Otherwise, again by \cref{thm:reduced}, for every option $G^R$, there
    exists some option $H^R$ with $G^R\geq H^R$. By induction, we must have
    $\terminal(G^R)\subseteq\terminal(H^R)$. Then, by
    \cref{lem:terminal-subset}, observe
    \begin{align*}
        \terminal(G)&=\bigcup(\terminal(G')+1)\\
                    &\subseteq\bigcup(\terminal(H')+1)\\
                    &=\terminal(H),
    \end{align*}
    where $G'$ and $H'$ range over all options of $G$ and $H$ respectively,
    which yields the result.
\end{proof}

For an alternative proof that constructs an explicit distinguishing game, see
\cref{app:alt-proof}.

\begin{corollary}
    \label{cor:comparable-race-birth}
    If $G$ and $H$ are Left dead ends with $G\geq H$, then
    $\race(G)\geq\race(H)$, and $\tilde\birth(G)\leq\tilde\birth(H)$. Hence
    also $\birth(G)\leq\birth(H)$.
\end{corollary}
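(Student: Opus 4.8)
The plan is to derive everything directly from the inclusion $\terminal(G)\subseteq\terminal(H)$ provided by \cref{thm:subset-terminal-lengths}, using the characterisations of $\race$ and $\tilde\birth$ as the minimum and maximum of the terminal-length set.

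First I would invoke \cref{thm:subset-terminal-lengths} to obtain $\terminal(G)\subseteq\terminal(H)$ from the hypothesis $G\geq H$. For the race inequality, I recall $\race(G)=\min(\terminal(G))$ by \cref{def:race}. Since $\terminal(G)\subseteq\terminal(H)$, the minimum of the larger set $\terminal(H)$ is taken over a superset, so it can only be smaller (or equal): $\min(\terminal(H))\leq\min(\terminal(G))$, giving $\race(H)\leq\race(G)$, i.e.\ $\race(G)\geq\race(H)$. For the formal birthday, I use the identity $\tilde\birth(G)=\max(\terminal(G))$ noted just before \cref{def:race}. Again by the inclusion, the maximum over the smaller set is at most the maximum over the larger set: $\max(\terminal(G))\leq\max(\terminal(H))$, which is exactly $\tilde\birth(G)\leq\tilde\birth(H)$.

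The final assertion $\birth(G)\leq\birth(H)$ should follow because, for short games, the (non-formal) birthday is bounded above by the formal birthday, and more to the point, comparable games respect the birthday ordering; alternatively, since the birthday of a game equals the formal birthday of its canonical form, and the canonical form of a Left dead end is itself a Left dead end (as remarked after \cref{def:good-option}), one can apply the formal-birthday inequality to the canonical forms. I would state this briefly rather than belabour it.

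The whole argument is essentially a one-line consequence of the preceding theorem, so there is no real obstacle; the only point requiring a sentence of care is the passage from the formal birthday inequality to the ordinary birthday inequality, where I must make sure the reduction to canonical forms is legitimate for Left dead ends. Everything else is the elementary observation that passing to a superset cannot increase a minimum nor decrease a maximum.
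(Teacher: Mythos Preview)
Your proposal is correct and mirrors the paper's proof: invoke \cref{thm:subset-terminal-lengths} for the inclusion $\terminal(G)\subseteq\terminal(H)$, then read off the race and formal-birthday inequalities as the min and max of these sets, and finally pass to birthdays via canonical forms. For the last step, use your second justification (apply the formal-birthday inequality to the canonical forms, which are themselves Left dead ends)---this is exactly what the paper does, phrased as ``all Left dead ends in an equivalence class have equal formal birthday''; drop your first alternative (``comparable games respect the birthday ordering''), which is essentially the assertion you are proving.
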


\begin{proof}
    Since $\terminal(G)\subseteq\terminal(H)$ by
    \cref{thm:subset-terminal-lengths}, it is clear from \cref{def:race} that
    \begin{align*}
        \race(G)&=\min(\terminal(G))\\
                &\leq\min(\terminal(H))\\
                &=\race(H).
    \end{align*}
    Similarly,
    \begin{align*}
        \tilde\birth(G)&=\max(\terminal(G))\\
                       &\leq\max(\terminal(H))\\
                       &=\tilde\birth(H),
    \end{align*}
    from which it follows that all Left dead ends in a given equivalence class
    must have equal formal birthday, and hence so does the canonical form
    (which must also be a Left dead end). Thus, $\birth(G)\leq\birth(H)$.
\end{proof}

Of course, if we have two Left dead ends $G$ and $H$ with $G=H$, then
\cref{thm:subset-terminal-lengths} tells us that $\terminal(G)=\terminal(H)$,
and \cref{cor:comparable-race-birth} says in particular that
$\race(G)=\race(H)$ and $\tilde\birth(G)=\tilde\birth(H)$. It is from this
point onwards that we will no longer refer to the formal birthday of a game,
and will instead use only its birthday, since \cref{cor:comparable-race-birth}
shows that these two functions agree on Left dead ends. The need for a
distinction between birthday and formal birthday in other contexts in CGT is
precisely because of the lack of this well-definedness up to equality there.

Siegel also proved the special case of \cref{cor:comparable-race-birth} as it
pertains to the birthday specifically; see \cite[Propositions 3.3 and 3.4 on
p.~7]{siegel:on}.

A weakened converse of \cref{thm:subset-terminal-lengths} is explored in
\cref{prop:flex-1-minimal}. Unsurprisingly, two Left dead ends having equal
terminal lengths does not imply the two games are equal. Take, for example,
$G=\overline{1}+W_2$ and $H=\{\cdot\mid\overline{1},\overline{2}\}$, where both
games have terminal lengths $\{2,3\}$, but $H>G$. \cref{prop:flex-1-minimal},
however, will allow us to conclude equality for exactly those Left dead ends
that are weakly 1-flexible (see \cref{def:flex}).

We now prove another simple consequence of \cref{thm:subset-terminal-lengths}
that combines itself with \cref{cor:move-runs} to provide a simple tool that
can often be useful when trying to show that $G\not\geq H$ for some Left dead
ends $G$ and $H$.

\begin{corollary}
    \label{cor:terminal-comparison}
    If $G$ and $H$ are Left dead ends with $G\geq H$, and $k$ is an integer
    satisfying $0\leq k\leq\tilde\birth(G)$, then, for any run
    $(G_0,G_1,\dots,G_k)$ of $G$ of length $k$, there exists a run
    $(H_0,H_1,\dots,H_k)$ of $H$ of length $k$ such that
    $\terminal(G_i)\subseteq\terminal(H_i)$ for every $0\leq i\leq k$.
\end{corollary}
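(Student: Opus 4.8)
The plan is to obtain this result by composing the two preceding results, \cref{cor:move-runs} and \cref{thm:subset-terminal-lengths}, which between them already supply everything needed. The first produces a run of $H$ that pointwise dominates below the given run of $G$, and the second converts each such domination into the desired inclusion of terminal lengths.

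Concretely, I would proceed as follows. Given the run $(G_0,G_1,\dots,G_k)$ of $G$ of length $k$, I first apply \cref{cor:move-runs}, whose hypotheses ($G,H$ Left dead ends with $G\geq H$ and $0\leq k\leq\tilde\birth(G)$) match ours exactly; this yields a run $(H_0,H_1,\dots,H_k)$ of $H$ of length $k$ with $G_i\geq H_i$ for every $0\leq i\leq k$. Next I would record the (routine but necessary) observation that each $G_i$ and each $H_i$ is itself a Left dead end: every term of a run is reached from $G$ (respectively $H$) by a sequence of option-moves, hence is a subposition of a Left dead end, and all subpositions of a Left dead end are again Left dead ends. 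Finally, for each index $i$ the pair $G_i\geq H_i$ satisfies the hypotheses of \cref{thm:subset-terminal-lengths}, so I may invoke it to conclude $\terminal(G_i)\subseteq\terminal(H_i)$. Collecting these inclusions across all $0\leq i\leq k$ gives precisely the statement.

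There is no genuine obstacle here; the corollary is a direct consequence of the two named results applied in sequence, and the only point requiring any care is the verification that the intermediate positions $G_i,H_i$ are Left dead ends, which is needed so that both \cref{cor:move-runs} and \cref{thm:subset-terminal-lengths} are legitimately applicable to them. Everything else is immediate.
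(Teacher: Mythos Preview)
Your proof is correct and follows essentially the same approach as the paper: apply \cref{cor:move-runs} to obtain the run $(H_0,\dots,H_k)$ with $G_i\geq H_i$, then invoke \cref{thm:subset-terminal-lengths} pointwise. Your added remark that each $G_i$ and $H_i$ is a Left dead end is a reasonable bit of care that the paper leaves implicit.
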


\begin{proof}
    By \cref{cor:move-runs}, we know that there exists a run
    $(H_0,H_1,\dots,H_k)$ of $H$ such that $G_i\geq H_i$ for every $0\leq i\leq
    k$. \cref{thm:subset-terminal-lengths} then yields the result immediately.
\end{proof}

It is tempting to try and prove the converse of this statement. But, alas, it
is not true in general. For example, take
\begin{gather*}
    G=\{\cdot\mid\{\cdot\mid W_2,\{\cdot\mid\overline{1},
    \overline{2}\}\}\}\quad\text{and}\\
    H=\{\cdot\mid\{\cdot\mid W_2, \overline{3}\}, \{\cdot\mid\overline{1},
    \{\cdot\mid\overline{1}, \overline{2}\}\}\},
\end{gather*}
where the reader may verify that $G\not\geq H$, but, for every run
$(G_0,G_1,\dots,G_k)$ of $G$, there exists a run $(H_0,H_1,\dots,H_k)$ such
that $\terminal(G_i)\subseteq\terminal(H_i)$ for every $0\leq i\leq k$.

It will be prudent of us here to remark upon the terminal lengths of sums of
games.

\begin{proposition}
    \label{prop:terminal-length-sums}
    If $G$ and $H$ are Left dead ends, then
    \[
        \terminal(G+H)=\terminal(G)+\terminal(H).
    \]
    In particular, $\race(G+H)=\race(G)+\race(H)$ and
    $\birth(G+H)=\birth(G)+\birth(H)$.
\end{proposition}

\begin{proof}
    When playing the sum $G+H$, observe that $G+H$ is terminated if and only if
    $G$ and $H$ are both terminated. Thus, the only way to terminate $G+H$ is
    by terminating $G$ in any $a\in\terminal(G)$ number of moves and
    terminating $H$ in any $b\in\terminal(H)$ number of moves.

    Finally, observe that
    \begin{align*}
        \race(G+H)&=\min(\terminal(G+H))\\
                  &=\min(\terminal(G))+\min(\terminal(H))\\
                  &=\race(G)+\race(H),
    \end{align*}
    and similarly for the birthday.
\end{proof}

Siegel proved the special case of $\birth(G+H)=\birth(G)+\birth(H)$ in
\cite[Proposition 3.8 on p.~8]{siegel:on}.

\begin{corollary}
    \label{cor:terminal-length-sum-size}
    If $G$ and $H$ are Left dead ends, then
    \[
        |\terminal(G+H)|\geq\max(|\terminal(G)|,|\terminal(H)|),
    \]
    with equality if and only if at least one of $G$ and $H$ is an integer.
\end{corollary}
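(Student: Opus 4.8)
The plan is to reduce everything to a statement about Minkowski sums of finite sets of non-negative integers and then invoke the elementary sumset inequality. By \cref{prop:terminal-length-sums} we have $\terminal(G+H)=\terminal(G)+\terminal(H)$, so writing $A=\terminal(G)$ and $B=\terminal(H)$, the claim becomes: for finite nonempty sets $A,B\subseteq\mathbb{N}_0$, we have $|A+B|\geq\max(|A|,|B|)$, with equality if and only if $|A|=1$ or $|B|=1$. This reformulation is legitimate because terminal lengths are always nonempty, and by \cref{lem:integer} the condition $|A|=1$ (respectively $|B|=1$) is exactly the condition that $G$ (respectively $H$) is an integer.

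The crux is the standard lower bound $|A+B|\geq|A|+|B|-1$. To establish it, I would write $A=\{a_1<a_2<\cdots<a_m\}$ and $B=\{b_1<b_2<\cdots<b_n\}$ and exhibit the strictly increasing chain
\[
    a_1+b_1<a_1+b_2<\cdots<a_1+b_n<a_2+b_n<\cdots<a_m+b_n,
\]
which consists of $n+(m-1)=|A|+|B|-1$ distinct elements of $A+B$. Since $B$ is nonempty we have $|B|\geq1$, so $|A+B|\geq|A|+|B|-1\geq|A|$; arguing symmetrically gives $|A+B|\geq|B|$, and hence $|A+B|\geq\max(|A|,|B|)$, which is the desired inequality.

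For the equality clause, I would split into the two directions. If one set, say $B$, is a singleton $\{b\}$, then $A+B=A+b$ is a translate of $A$ and so has exactly $|A|=\max(|A|,|B|)$ elements, giving equality; by \cref{lem:integer} this is precisely the case where $H$ is an integer, and symmetrically for $G$. Conversely, if neither $G$ nor $H$ is an integer, then $|A|\geq2$ and $|B|\geq2$ by \cref{lem:integer}, so $|A+B|\geq|A|+|B|-1\geq\max(|A|,|B|)+1>\max(|A|,|B|)$ and the inequality is strict. I do not anticipate a genuine obstacle here: the monotone-chain argument is the only real content, and the only point requiring a little care is ensuring that the ``neither is an integer'' case forces \emph{strict} inequality, which the bound $|A|+|B|-1$ delivers as soon as both sets have at least two elements.
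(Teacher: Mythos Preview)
Your proof is correct. Both your argument and the paper's reduce to a sumset claim via \cref{prop:terminal-length-sums} and \cref{lem:integer}, but they establish the strict-inequality case slightly differently. You prove the full additive-combinatorics bound $|A+B|\geq|A|+|B|-1$ with the standard monotone chain
\[
    a_1+b_1<\cdots<a_1+b_n<a_2+b_n<\cdots<a_m+b_n,
\]
and then specialise. The paper instead exhibits only the subset $(\race(G)+\terminal(H))\cup\{\birth(G+H)\}$, which in your notation is the first $n$ terms of the chain together with its last term $a_m+b_n$; this already has cardinality $|\terminal(H)|+1$ once $G$ is not an integer. Your route yields a strictly stronger intermediate inequality (the paper in fact remarks immediately after the corollary that the result can be viewed through additive number theory), while the paper's route is more minimal and stays phrased in the game-theoretic invariants $\race$ and $\birth$.
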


\begin{proof}
    Suppose, without loss of generality, that $G$ is an integer. It follows
    from \cref{lem:integer} that $\terminal(G)=\{n\}$ for some $n$.
    \cref{prop:terminal-length-sums} then yields
    \begin{align*}
        |\terminal(G+H)|&=|\{n+b:b\in\terminal(H)\}|\\
                &=|\terminal(H)|\\
                &=\max(|\terminal(G)|,|\terminal(H)|).
    \end{align*}

    Suppose now that neither $G$ nor $H$ is an integer. So, without loss of
    generality, $1<|\terminal(G)|\leq|\terminal(H)|$ by \cref{lem:integer}.
    Then, the union
    \[
        (\race(G)+\terminal(H))\cup\{\birth(G+H)\}\subseteq\terminal(G+H)
    \]
    has cardinality $|\terminal(H)|+1$. Recalling that
    $|\terminal(H)|=\max(|\terminal(G)|,|\terminal(H)|)$, we obtain the result.
\end{proof}

\cref{cor:terminal-length-sum-size} can actually also be a viewed as a result
of additive number theory; in particular, see \cite[Theorem 1.4 on
p.~8]{nathanson:additive}. It might be worthwhile to investigate these
relationships further.

\subsection{Finite factorisation monoid}
\label{subsec:FFM}

We now return more resolutely to proving that the monoid of Left dead ends is
an FF-monoid, having amassed more than enough firepower.

\begin{lemma}
    \label{lem:atom-birthday}
    If $G,H$, and $K$ are non-zero Left dead ends with $G=H+K$, then
    $\birth(H)<\birth(G)$.
\end{lemma}

\begin{proof}
    Suppose, for a contradiction, that $\birth(H)\geq\birth(G)$. It then
    follows from \cref{prop:terminal-length-sums} that
    $\birth(H+K)=\birth(H)+\birth(K)>\birth(G)$, contradicting
    \cref{cor:comparable-race-birth}.
\end{proof}

\begin{lemma}
    \label{lem:atomic}
    The monoid of Left dead ends $\mathcal{L}$ is atomic.
\end{lemma}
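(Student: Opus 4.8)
The plan is to prove atomicity by strong induction on the birthday $\birth(G)$ of a non-zero Left dead end $G$, with \cref{lem:atom-birthday} supplying exactly the strict-decrease property that makes this induction well-founded. Since every non-zero Left dead end has birthday at least $1$ and birthdays are non-negative integers, such an induction is legitimate, and there is nothing to prove for the invertible element $0$.

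First I would carry out the inductive step, which also absorbs the base case. Let $G$ be a non-zero Left dead end, and assume as the inductive hypothesis that every non-zero Left dead end of strictly smaller birthday is atomic. If $G$ is itself an atom, then it is trivially atomic, being its own factorisation of length $1$, and we are done. Otherwise $G$ is not an atom, so by definition there is a decomposition $G=H+K$ in which neither $H$ nor $K$ is invertible. Here I would invoke the fact that $\mathcal{L}$ is reduced, so that the only invertible element is $0$; this promotes ``non-invertible'' to ``non-zero'', giving that $H$ and $K$ are both non-zero Left dead ends.

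Next I would apply \cref{lem:atom-birthday} to this decomposition. As $G$, $H$, and $K$ are all non-zero with $G=H+K$, the lemma yields $\birth(H)<\birth(G)$; swapping the roles of $H$ and $K$ (using $G=K+H$) gives likewise $\birth(K)<\birth(G)$. The inductive hypothesis then applies to both summands, producing factorisations $H=a_1+\cdots+a_m$ and $K=b_1+\cdots+b_n$ into atoms. Concatenating them gives $G=a_1+\cdots+a_m+b_1+\cdots+b_n$, so $G$ is atomic, completing the induction. The base case of smallest non-zero birthday (namely $1$) is covered automatically: a birthday-$1$ Left dead end admits no decomposition into two non-zero summands, since that would force $\birth(G)=\birth(H)+\birth(K)\geq 2$ by \cref{prop:terminal-length-sums}, and hence it must be an atom.

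I do not expect a genuine obstacle, as essentially all the work is already done by \cref{lem:atom-birthday}. The only points needing a little care are the appeal to reducedness, which is what lets a non-atomic decomposition be taken with both summands non-zero rather than merely non-invertible, and the observation that the ordinary (short-game) birthday is a well-founded, strictly decreasing parameter under proper summands---both of which are handed to us by the results already established.
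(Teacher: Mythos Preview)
Your proposal is correct and follows essentially the same approach as the paper: strong induction on $\birth(G)$, using \cref{lem:atom-birthday} to ensure that in any non-trivial decomposition $G=H+K$ both summands have strictly smaller birthday, so the inductive hypothesis provides factorisations whose concatenation factorises $G$. Your explicit invocation of reducedness to pass from ``non-invertible'' to ``non-zero'' is a nice touch that the paper leaves implicit.
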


\begin{proof}
    We will proceed by induction on the birthday of a Left dead end. The empty
    game trivially has a factorisation: the empty sum. Suppose all Left dead
    ends of birthday strictly less than $n>0$ admit factorisations. Now choose
    an arbitrary Left dead end $G$ with $\birth(G)=n$. If $G$ cannot be written
    in the form $H+K$, for some non-zero Left dead ends $H$ and $K$, then $G$
    is itself an atom by definition. Otherwise, it follows from
    \cref{lem:atom-birthday} that $\birth(H),\birth(K)<n$. By induction, $H$
    and $K$ admit factorisations, and, thus, so does $G$.
\end{proof}

\begin{lemma}
    \label{lem:bf}
    The monoid of Left dead ends $\mathcal{L}$ is a BF-monoid. In particular,
    if $G$ is a Left dead end, then the length of each factorisation of $G$ is
    at most $\race(G)$; that is, $\longest{G}\leq\race(G)$.
\end{lemma}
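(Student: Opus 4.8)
The plan is to leverage the additivity of the race established in \cref{prop:terminal-length-sums} together with the fact, recorded in \cref{lem:0-props}, that a Left dead end has race zero precisely when it is (isomorphic to) the empty game. Since \cref{lem:atomic} already gives that every Left dead end is atomic, it remains only to bound the length of an arbitrary factorisation; the bound $\longest{G}\leq\race(G)$ then shows simultaneously that longest factorisations exist, so $\mathcal{L}$ is a BF-monoid.

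First I would observe that every atom of $\mathcal{L}$ is a non-zero Left dead end: atoms are non-invertible by definition, and as $\mathcal{L}$ is reduced, the only invertible element is $0$. Consequently, by \cref{lem:0-props}, each atom $a$ satisfies $\race(a)\geq1$, since the race of a non-zero Left dead end is a strictly positive integer.

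Next I would take an arbitrary factorisation $G=a_1+a_2+\cdots+a_n$ into atoms $a_i$ and apply the additivity of the race. Iterating \cref{prop:terminal-length-sums} gives
\[
    \race(G)=\sum_{i=1}^{n}\race(a_i)\geq\sum_{i=1}^{n}1=n,
\]
using $\race(a_i)\geq1$ for each $i$ at the final inequality. Hence every factorisation of $G$ has length at most $\race(G)$, which is precisely the claimed inequality $\longest{G}\leq\race(G)$; combined with atomicity this yields that $\mathcal{L}$ is a BF-monoid.

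There is no serious obstacle here, as the substantive work has already been done in \cref{prop:terminal-length-sums,lem:0-props,lem:atomic}; the only point requiring a moment's care is the reduction from ``atom'' to ``race at least $1$'', which rests on reducedness to rule out that any atom could be the identity and hence escape the bound.
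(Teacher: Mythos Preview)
Your proof is correct and follows essentially the same route as the paper: both use atomicity from \cref{lem:atomic}, the additivity of race from \cref{prop:terminal-length-sums}, and the fact that each atom has race at least $1$ to bound the length of any factorisation by $\race(G)$. If anything, you are slightly more explicit than the paper in justifying why atoms have positive race (via reducedness and \cref{lem:0-props}), whereas the paper leaves this step implicit.
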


\begin{proof}
    Recall that $\mathcal{L}$ is atomic (\cref{lem:atomic}), and let
    $G_1+\dots+G_n$ be an arbitrary factorisation of $G$. By
    \cref{prop:terminal-length-sums}, it follows that
    $\race(G)=\race(G_1)+\dots+\race(G_n)\geq n$. Thus, there can be at most
    $\race(G)$ atoms in a factorisation of $G$, or else
    \cref{cor:comparable-race-birth} would be violated.
\end{proof}

While the bound in \cref{lem:bf} is sharp, we will have more to say in
\cref{sec:factorisation-lengths} on improving this for many families; see
\cref{thm:one-option-bound,thm:length-bound}.

\begin{corollary}
    \label{cor:race-1-atom}
    If $G$ is a Left dead end with $\race(G)=1$, then $G$ is an atom.
\end{corollary}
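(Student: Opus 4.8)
The plan is to read this off directly from the bounded-factorisation bound established in \cref{lem:bf}, so the proof is essentially immediate and there is no genuine obstacle to overcome.

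First I would observe that $\race(G)=1$ forces $G$ to be non-zero: by \cref{lem:0-props}, a Left dead end satisfies $\race(G)=0$ precisely when $G\cong0$, so having $\race(G)=1$ rules this out. Next, since $\mathcal{L}$ is atomic by \cref{lem:atomic}, the game $G$ admits at least one factorisation into atoms. By \cref{lem:bf}, the length of any such factorisation is at most $\race(G)=1$. Because $G$ is non-zero, it cannot be the empty sum (which yields the identity $0$), so every factorisation of $G$ has length at least $1$, and therefore exactly $1$.

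Finally, a factorisation of length $1$ is by definition a single atom, so $G$ is itself an atom, which is the desired conclusion. The entire argument is just the chain $\race(G)=1\implies 1\leq\longest{G}\leq\race(G)=1$, forcing $G$ to factor as a single atom; the only subtlety, if any, is the trivial bookkeeping that a non-zero element has no empty factorisation, which is handled by \cref{lem:0-props}.
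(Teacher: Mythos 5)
Your proposal is correct and matches the paper's own argument, which likewise derives the result directly from \cref{lem:bf}; you have merely spelled out the non-zero bookkeeping that the paper leaves implicit. No issues.
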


\begin{proof}
    By \cref{lem:bf}, the length of each factorisation of $G$ is at most 1,
    yielding the result immediately.
\end{proof}

Another way of phrasing this is that, if $G$ has an option to the empty game,
then $G$ is an atom. Siegel calls these games \emph{terminable}, and
essentially also proves this proposition, albeit in a different language and
context, in \cite[Proposition 3.9 on p.~8]{siegel:on}. For example, $W_n$ is a
(terminable) atom for every positive $n$. This result (\cref{cor:race-1-atom})
will be strengthened further in \cref{prop:integer-option-atom}.

Finally, we prove the main result of the section.

\begin{theorem}
    \label{thm:ff}
    The monoid of Left dead ends $\mathcal{L}$ is an FF-monoid.
\end{theorem}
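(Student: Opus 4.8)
The plan is to leverage everything we have already assembled. By \cref{lem:bf}, the monoid $\mathcal{L}$ is a BF-monoid: it is atomic, and every factorisation of a Left dead end $G$ has length at most $\race(G)$. It therefore only remains to verify the finiteness condition defining an FF-monoid, namely that each Left dead end admits only \emph{finitely many} factorisations.

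The key observation is that the atoms appearing in any factorisation of $G$ are drawn from a fixed finite pool. Suppose $G=G_1+\cdots+G_n$ is a factorisation. By repeated application of \cref{prop:terminal-length-sums}, birthdays are additive, so $\birth(G)=\birth(G_1)+\cdots+\birth(G_n)$; since each atom $G_i$ is non-invertible and hence non-zero, we have $\birth(G_i)\geq1$, and therefore $\birth(G_i)\leq\birth(G)$ for every $i$. Thus every atom that can possibly occur in a factorisation of $G$ has birthday at most $\birth(G)$.

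Next I would invoke the finiteness of Left dead ends of bounded birthday. There are only finitely many forms of Left dead end born by any fixed day $m$: this follows by a routine induction, since such a game has the shape $\{\cdot\mid\mathcal{R}\}$ where $\mathcal{R}$ is a finite set of Left dead ends each born by day at most $m-1$, of which there are only finitely many forms, so there are only finitely many choices for the subset $\mathcal{R}$. A fortiori there are only finitely many equivalence classes, and in particular only finitely many candidate atoms---say at most $N$ of them---of birthday at most $\birth(G)$.

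Combining the two bounds finishes the argument: a factorisation of $G$ (counted up to rearrangement) is a multiset of atoms, each drawn from a set of size at most $N$, with total length at most $\race(G)$. The number of such multisets is finite---crudely at most $(N+1)^{\race(G)}$---so $G$ admits only finitely many factorisations, completing the proof that $\mathcal{L}$ is an FF-monoid. The only step that requires genuine care, rather than a citation of earlier results, is the finiteness of Left dead ends of bounded birthday; this is precisely where the standing hypothesis that all games are \emph{short} is indispensable.
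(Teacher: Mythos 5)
Your proof is correct and follows essentially the same route as the paper's: bound the lengths via \cref{lem:bf}, bound the birthdays of the atoms occurring in any factorisation (you inline the additivity argument from \cref{prop:terminal-length-sums}, whereas the paper cites \cref{lem:atom-birthday}, which is proved from the same fact), and then count multisets drawn from the finitely many Left dead ends of bounded birthday. No gaps.
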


\begin{proof}
    Let $G$ be a Left dead end. Recall that $\mathcal{L}$ is a BF-monoid
    (\cref{lem:bf}). By \cref{lem:atom-birthday}, it follows that each atom in
    each factorisation of $G$ must have birthday at most $\birth(G)$ (there is
    equality precisely when $G$ is itself an atom). There are finitely many
    short game forms of any given birthday, and hence only finitely many atoms
    of birthday at most $\birth(G)$. \cref{lem:bf} tells us that the set of
    lengths of all factorisations of $G$ is finite. Each factorisation of $G$
    is thus obtained by choosing a finite number of atoms from a finite set.
    There must, therefore, exist only finitely many factorisations of $G$.
\end{proof}

So, every Left dead end can be written as a (necessarily finite) sum of atoms
in only finitely many ways. But what do these atoms look like? How many of them
are there? We will soon investigate both of these questions, but we first we
make a swift detour in proving a powerful property of $\overline{1}$ that will
yield the only known prime element in our monoid.

\section{In search of a prime}
\label{sec:prime}

We first describe the property of $\overline{1}$ that we are about to prove,
since it is perhaps surprising: given any sum of Left dead ends
$\overline{1}+G_1+\dots+G_n$, it is \emph{always} the best move to play on
$\overline{1}$. That is in every sum of Left dead ends containing
$\overline{1}$, there is exactly one good move.

\begin{lemma}
    \label{lem:1-plus-option}
    If $G$ is a Left dead end, then $\overline{1}+G'\geq G$ for every option
    $G'$ of $G$.
\end{lemma}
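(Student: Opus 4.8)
The plan is to apply Siegel's simplified comparison test for Left dead ends (\cref{thm:reduced}) directly, proceeding by induction on $\birth(G)$. Both $\overline{1}+G'$ and $G$ are Left dead ends (sums and options of Left dead ends remain Left dead ends), so the test applies. The base case $G\cong0$ is vacuous, since then $G$ has no options at all.

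For the inductive step, I would first dispatch condition (1) of \cref{thm:reduced}. Since $\overline{1}=\{\cdot\mid0\}$ always affords Right the move $\overline{1}+G'\to G'$, the sum $\overline{1}+G'$ is never isomorphic to $0$; thus the premise of (1) fails and the condition holds vacuously.

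The substance lies in condition (2), for which I would enumerate the Right options of $\overline{1}+G'$. There are two kinds: the single option $0+G'=G'$, obtained by moving in the $\overline{1}$ component; and the options $\overline{1}+G''$, obtained by moving $G'\to G''$ in the second component, where $G''$ ranges over the Right options of $G'$. The key observation—and the crux of the whole argument—is that in both cases the matching Right option of $G$ can be taken to be $G'$ itself. For the first kind, $G'\geq G'$ by reflexivity. For the second kind, $G''$ is an option of $G'$, so the induction hypothesis applied to $G'$ (which has strictly smaller birthday) yields $\overline{1}+G''\geq G'$; as $G'$ is a Right option of $G$, this is exactly the required match.

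Having verified both conditions of \cref{thm:reduced}, we conclude $\overline{1}+G'\geq G$. The main obstacle is conceptual rather than computational: recognising that every Right option of $\overline{1}+G'$ should be answered by the \emph{same} Right option $G'$ of $G$, and that the induction hypothesis supplies precisely the inequality needed to handle those options that move within the $G'$ component.
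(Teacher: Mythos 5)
Your proof is correct and follows essentially the same route as the paper's: induct, then verify condition (2) of \cref{thm:reduced} by answering every Right option of $\overline{1}+G'$ with the single Right option $G'$ of $G$, using reflexivity for the move in the $\overline{1}$ component and the induction hypothesis (applied to $G'$ and its option $G''$) for moves in the other component. The only cosmetic difference is that the paper isolates $G'=0$ as a separate base case, whereas you absorb it into the general argument; both are fine.
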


\begin{proof}
    We proceed by induction on the birthday of a Left dead end. If $G'=0$, then
    $\overline{1}+G'=\overline{1}\geq G$ since the options of $\overline{1}$
    form a (non-empty) subset of the options of $G$. Assume now that $G'$ is
    non-zero.

    The options of $\overline{1}+G'$ are $G'$ and $\overline{1}+G''$, where
    $G''$ ranges over all options of $G'$. Observe that $G'\geq G'$, and also
    that $\overline{1}+G''\geq G'$ by induction. The result is thus yielded by
    \cref{thm:reduced}.
\end{proof}

\begin{proposition}
    \label{prop:one-option}
    If $G$ is a Left dead end, then $\overline{1}+G=\{\cdot\mid G\}$.
\end{proposition}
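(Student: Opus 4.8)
The plan is to verify the equality directly via the comparison test for Left dead ends (\cref{thm:reduced}), establishing the two inequalities $\overline{1}+G\geq\{\cdot\mid G\}$ and $\{\cdot\mid G\}\geq\overline{1}+G$ in turn. First I would record the Right options of each side. Since $\overline{1}=\{\cdot\mid0\}$, moving in $\overline{1}$ sends $\overline{1}+G$ to $0+G=G$, while moving in $G$ sends it to $\overline{1}+G^R$; hence the Right options of $\overline{1}+G$ are exactly $G$ together with the games $\overline{1}+G^R$, as $G^R$ ranges over $G^{\mathcal{R}}$. The game $\{\cdot\mid G\}$ has the single Right option $G$. Both sides are Left dead ends (the left since $\mathcal{L}$ is additively closed, the right since its only subposition beyond those of $G$ is the whole game, which is a Left end), and neither is isomorphic to $0$ as each has $G$ among its Right options; so condition~(1) of \cref{thm:reduced} is vacuous throughout and only condition~(2) needs checking.

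For $\overline{1}+G\geq\{\cdot\mid G\}$, I must match each Right option of the left-hand side with a dominated Right option of the right-hand side, the only available target being $G$. The option $G$ is handled by $G\geq G$, and each option $\overline{1}+G^R$ is handled precisely by \cref{lem:1-plus-option}, which gives $\overline{1}+G^R\geq G$. For the reverse inequality $\{\cdot\mid G\}\geq\overline{1}+G$, the sole Right option $G$ of the left-hand side needs a dominated Right option of $\overline{1}+G$; choosing the option $G$ obtained by moving in $\overline{1}$ gives $G\geq G$, as required.

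The content of the proposition is therefore almost entirely front-loaded into \cref{lem:1-plus-option}: once that inequality is available, the spurious options $\overline{1}+G^R$ of $\overline{1}+G$ are all dominated by $G$, so $\overline{1}+G$ collapses to the form with $G$ as its unique Right option. I do not anticipate any genuine obstacle; the only care needed is the bookkeeping of options and the observation that the move in $\overline{1}$ itself supplies a copy of $G$ as a Right option, which is exactly what makes the reverse inequality immediate. The degenerate case $G\cong0$ requires no separate treatment, since then $G^{\mathcal{R}}$ is empty and both sides reduce to $\overline{1}$.
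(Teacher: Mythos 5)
Your proof is correct and takes essentially the same route as the paper's: both directions rest on \cref{thm:reduced} together with \cref{lem:1-plus-option}. The only (cosmetic) difference is that the paper obtains $\overline{1}+G\geq\{\cdot\mid G\}$ in one step by applying \cref{lem:1-plus-option} to the Left dead end $\{\cdot\mid G\}$ with option $G$, whereas you unfold the comparison test one level further and apply the lemma to the options $G^R$ of $G$ instead.
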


\begin{proof}
    Since the options of $\{\cdot\mid G\}$ form a (non-empty) subset of the
    options of $\overline{1}+G$, it is immediate that $\{\cdot\mid
    G\}\geq\overline{1}+G$. By \cref{lem:1-plus-option}, we have
    $\overline{1}+G\geq\{\cdot\mid G\}$, yielding the result.
\end{proof}

It will be useful for us to reframe \cref{prop:one-option} to talk about the
number of good options of a Left dead end $G$, which we do so now.

\begin{proposition}
    \label{prop:one-good-option}
    If $G$ and $G'$ are Left dead ends, then $G$ has exactly one good option
    (equal to) $G'$ if and only if $G=\overline{1}+G'$.
\end{proposition}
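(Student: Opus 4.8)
The plan is to prove the biconditional in \cref{prop:one-good-option} by leaning heavily on \cref{prop:one-option}, which identifies $\overline{1}+G'$ with the simple form $\{\cdot\mid G'\}$. The forward direction takes some care about what ``exactly one good option'' buys us; the reverse direction is almost immediate.

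For the reverse direction, suppose $G=\overline{1}+G'$. By \cref{prop:one-option} we have $G=\{\cdot\mid G'\}$, a form whose only Right option is $G'$. Hence $G$ has a unique option, namely $G'$, and a game with only one option trivially has that option as its unique good option (it cannot be strictly dominated by any other option, there being none). So $G$ has exactly one good option, equal to $G'$, as required.

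For the forward direction, suppose $G$ has exactly one good option, and that good option equals $G'$. I would like to conclude $G=\overline{1}+G'=\{\cdot\mid G'\}$ via \cref{thm:reduced}. First note $G\neq 0$, since $0$ has no options at all; so both $G$ and $\{\cdot\mid G'\}$ are non-zero Left dead ends and case (1) of \cref{thm:reduced} is vacuous, reducing everything to the option condition (2). For $\{\cdot\mid G'\}\geq G$: every Right option of $\{\cdot\mid G'\}$ is $G'$ (up to equality), which is by hypothesis a good option of $G$, so it is matched by itself among the options of $G$. For $G\geq\{\cdot\mid G'\}$: given any Right option $G^R$ of $G$, I must exhibit a Right option of $\{\cdot\mid G'\}$ dominated by it; the only candidate is $G'$, so I need $G^R\geq G'$. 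This is exactly where the ``exactly one good option'' hypothesis does its work: since $G'$ is the \emph{unique} good option, every option $G^R$ of $G$ satisfies $G^R\geq G'$ (an arbitrary option is either itself good, hence equal to $G'$, or is strictly dominated by a good option, which must again be $G'$). Feeding both inequalities into \cref{thm:reduced} gives $G=\{\cdot\mid G'\}=\overline{1}+G'$.

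The main obstacle is making precise the claim that a unique good option dominates \emph{all} options, including itself with equality and the dominated ones strictly. I expect this to follow directly from \cref{def:good-option}: a non-good option is by definition strictly dominated by some option, and iterating (using that the game is short, so the dominance chain terminates) lands on a good option, which can only be $G'$. I should state this cleanly so the reader sees why there is no circularity, and I should double-check the edge case where $G$ literally has a single option form that is already good, so that ``exactly one good option'' and ``exactly one option'' coincide and the argument degenerates gracefully.
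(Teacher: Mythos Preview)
Your argument is correct and takes the same route as the paper, which also passes through $\{\cdot\mid G'\}$ via \cref{prop:one-option}; your forward direction simply unpacks what the paper dismisses as ``clear''. One small caution in the reverse direction: $G=\{\cdot\mid G'\}$ is an equality of \emph{values}, not of forms, so you cannot infer that $G$ literally ``has a unique option''---the paper handles this by invoking \cref{cor:good-option-replies} to transfer the count of good options across the equality, and you should do likewise (or appeal to the remark after \cref{def:good-option} that good options coincide with options of the canonical form).
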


\begin{proof}
    First suppose that $G$ has exactly one good option $G'$, then clearly
    $G=\{\cdot\mid G'\}$. By \cref{prop:one-option}, it then follows that
    $G=\overline{1}+G'$.

    Now suppose instead that $G=\overline{1}+G'$. \cref{prop:one-option} then
    tells us that $G=\{\cdot\mid G'\}$, which has exactly one good option, and
    hence it follows from \cref{cor:good-option-replies} that $G$ has exactly
    one good option, which must be equal to $G'$, completing the proof.
\end{proof}

What is surprising is that \cref{prop:one-option} is powerful enough to prove
that $\overline{1}$ is prime---we remind the reader that a game $G$ is prime
if, whenever $G\mid H+K$, it follows that $G\mid H$ or $G\mid K$.

\begin{theorem}
    \label{thm:one-prime}
    The game $\overline{1}$ is prime in the monoid of Left dead ends
    $\mathcal{L}$.
\end{theorem}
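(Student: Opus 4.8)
The plan is to translate primality entirely into the language of good options, exploiting the characterisation already established in \cref{prop:one-good-option}. Recall that $\overline 1\mid X$ means $X=\overline 1+J$ for some $J\in\mathcal L$, which by \cref{prop:one-good-option} is equivalent to saying that $X$ has exactly one good option (namely $J$, up to equality). So the statement to prove reduces to the following purely structural claim: if $H+K$ has exactly one good option, then at least one of $H$ and $K$ has exactly one good option. (The degenerate cases are immediate: if $H\cong0$ then $H+K=K$, and $\overline 1\mid H+K$ is literally $\overline 1\mid K$; similarly for $K\cong0$.)

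For the core argument, let $L$ denote the unique good option of $H+K$. The first thing I would record is that $L$, being a good option, is (equal to) an \emph{actual} option of the form $H+K$, whose options are exactly the $H'+K$ and the $H+K'$; hence either $L=H_0'+K$ for some option $H_0'$ of $H$, or $L=H+K_0'$ for some option $K_0'$ of $K$. Assume the former (the latter is symmetric and will yield $\overline 1\mid K$). The key sub-claim is that \emph{every} option of $H+K$ satisfies $O\leq L$: any option that is not good is strictly dominated by some option, and since the set of options is finite, iterating this produces a chain terminating at a good option, which can only be $L$; so no option is incomparable to $L$, and all lie below it. Applying this to the options of the form $H'+K$ gives $H'+K\leq H_0'+K$ for every option $H'$ of $H$, whereupon pocancellativity (\cref{thm:pocancellative}) yields $H'\leq H_0'$. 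Thus $H_0'$ dominates every option of $H$, so by \cref{def:good-option} the game $H$ has exactly one good option.

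Finally I would invoke \cref{prop:one-good-option} once more: since $H$ has exactly one good option $H_0'$, we conclude $H=\overline 1+H_0'$, that is, $\overline 1\mid H$, as required; the symmetric case delivers $\overline 1\mid K$. I expect the main obstacle to be the sub-claim that the unique good option $L$ dominates \emph{all} options of $H+K$ (not merely all non-good ones in an obvious way), since this is where the finiteness of the option set and the ``chain to a maximal element'' reasoning are genuinely used; once that is in hand, pocancellativity does the rest with no further computation. It is worth flagging explicitly that this whole argument hinges on the special behaviour of $\overline 1$ captured in \cref{prop:one-option,prop:one-good-option}, and would not transfer to a general atom, which is consistent with the remark that we cannot yet decide primality of any other atom.
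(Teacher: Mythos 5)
Your overall strategy is the paper's: identify divisibility by $\overline 1$ with having exactly one good option via \cref{prop:one-good-option}, locate the unique good option of $H+K$ on one component, and cancel. The paper's final step is slightly more direct than yours: having found that the unique good option of $H+K$ is (without loss of generality) some $H'+K$, it applies \cref{prop:one-option} to the sum itself, writing $H+K=\{\cdot\mid H'+K\}=\overline 1+H'+K$, and then cancels $K$ to obtain $H=\overline 1+H'$. Your version instead descends to the component $H$ and re-applies \cref{prop:one-good-option} there, which is why you need the extra sub-claim that every option of $H+K$ is comparable to $L$, and why you need pocancellativity rather than plain cancellation. Both routes work and rest on the same two special properties of $\overline 1$.

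One correction is needed: you have the order the wrong way round. For the (Right) options of a Left dead end, a good option is a \emph{minimal} one---the paper spells this out after \cref{def:good-option}: $G'$ is good if there is no option $G''$ with $G'>G''$. So your chain argument (a non-good option is strictly dominated, i.e.\ lies strictly \emph{above} some other option; iterate) produces a strictly decreasing sequence terminating at the unique good option $L$, and the correct conclusion is that every option $O$ of $H+K$ satisfies $O\geq L$, not $O\leq L$. Correspondingly you should obtain $H'+K\geq H_0'+K$ and hence $H'\geq H_0'$ for every option $H'$ of $H$, which is precisely what it means for $H_0'$ to dominate every option of $H$. Your two directional slips cancel each other, so the argument is sound once the inequalities are flipped; as written, the intermediate lines are inconsistent with the paper's conventions.
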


\begin{proof}
    Let $H$ and $K$ be Left dead ends. If $\overline{1}\mid H+K$, then there
    exists a Left dead end $G$ such that $\overline{1}+G=H+K$. Now,
    $\overline{1}+G=\{\cdot\mid G\}$ by \cref{prop:one-option}, so it has
    exactly one good option by \cref{prop:one-good-option}. It follows from
    \cref{cor:good-option-replies} that $H+K$ must then also have only one good
    option; say, without loss of generality, some $H'+K$. Thus,
    \begin{align*}
        H+K&=\{\cdot\mid H'+K\}\\
           &=\overline{1}+H'+K
    \end{align*}
    by \cref{prop:one-option}. And so, by cancellativity
    (\cref{thm:pocancellative}), $H=\overline{1}+H'$, which means precisely
    that $\overline{1}\mid H$.
\end{proof}

It follows from \cref{thm:one-prime} that every integer is uniquely
factorisable; we know now that $\overline{n}$ admits a prime factorisation
$n\cdot\overline{1}$ for every $n\geq0$, and such factorisations must be
unique!

It is unclear how we might prove that there exists an atom other than
$\overline{1}$ that is prime. Similarly, all efforts at proving that there
exists an atom that is \emph{not} prime have also failed. We leave this as an
open problem.

\begin{problem}
    Does there exist a prime element other than $\overline{1}$? From another
    angle, can we show that there exists an atom that is not prime?
\end{problem}

We now turn back towards our study of atoms, first introducing a new concept
called \emph{flexibility} that will aid us in our quest.

\section{Flexibility}
\label{sec:flexibility}

The formal birthday of a short game---the height of its game tree---is a
measure of how far away a game is from 0. When playing a Left dead end, once
Right has moved to an integer, he no longer has any \emph{flexibility} in how
to play; he is stuck with a rigid structure of having to play a fixed number of
moves until the component terminates. We have seen in \cref{lem:integer} that a
Left dead end is an integer if and only if every terminal run has the same
length. It seems natural then to consider not the distance of a Left dead end
to 0, but instead its distance to an integer. We define this precisely now.

\begin{definition}
    \label{def:flex}
    If $G$ is a Left dead end, then we define the \emph{flexibility} of $G$,
    written $\flex(G)$, recursively as
    \[
        \flex(G)=1+\max(\flex(G')),
    \]
    where $G'$ ranges over all options of $G$, when $G$ is not an integer, and
    as $\flex(G)\coloneq0$ otherwise.

    If $\flex(G)=k$, then we say $G$ is \emph{$k$-flexible}. If $\flex(G)\leq
    k$, then we say $G$ is \emph{weakly $k$-flexible}.
\end{definition}

As a simple example, the reader may verify that $\flex(W_n)=n-1$ for every
$n\geq1$, and also that $\flex(\{\cdot\mid W_3,\overline{7}\})=3$. We can also
immediately remark that the only 0-flexible atom is $\overline{1}$: by
\cref{cor:race-1-atom}, it is clear that $\overline{1}$ is an atom; and, if
$n>0$, then $\overline{n}=\overline{n-1}+\overline{1}$.

We could have used a construction analogous to how the set of short games is
typically constructed in the following way: define
$\mathcal{F}_0\coloneq\{\overline{n}:n\in\mathbb{N}_0\}$, and then,
recursively,
\[
    \mathcal{F}_{n}\coloneq\mathcal{F}_{n-1}\cup\{\{\cdot\mid
    \mathscr{G}\}:\mathscr{G}\subseteq \mathcal{F}_{n-1}\text{ and
}\mathscr{G}\text{ is finite}\}
\]
for $n>0$; the flexibility of $G$ is then the smallest integer $n$ such that
$G\in\mathcal{F}_n$. This alternative definition may be good to keep in mind,
but it is not how we choose to frame things here.

Those games that are weakly 1-flexible have a useful property: of all the Left
dead ends whose terminal lengths are a superset of some fixed set, there is a
unique, maximal such Left dead end (ordered by `$\geq$'), and it is weakly
1-flexible.

\begin{proposition}
    \label{prop:flex-1-minimal}
    If $G$ and $H$ are Left dead ends with $\flex(G)\leq1$ and
    $\terminal(G)\subseteq\terminal(H)$, then $G\geq H$.
\end{proposition}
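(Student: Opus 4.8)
The plan is to prove this by induction on the birthday of $G$, using the reduced comparison test (\cref{thm:reduced}) to verify $G \geq H$. Since $\flex(G) \leq 1$, the structure of $G$ is quite constrained: either $G$ is an integer ($\flex(G) = 0$), or $G$ is $1$-flexible, meaning every option of $G$ is an integer. This is the crucial leverage point — $1$-flexibility forces all options to be integers $\overline{m}$, which are the rigid, fully-determined Left dead ends.

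Let me outline the cases. First, if $G \cong 0$, then $\race(G) = 0$, so $0 \in \terminal(G) \subseteq \terminal(H)$, whence $H \cong 0$ by \cref{lem:0-props}, and $G \geq H$ trivially. Next, if $G$ is a non-zero integer $\overline{n}$, then by \cref{lem:integer} we have $\terminal(G) = \{n\}$, and so $n \in \terminal(H)$; I would aim to show $\overline{n} \geq H$ directly via \cref{thm:reduced}, which requires that for the unique Right option $\overline{n-1}$ of $\overline{n}$, there is a Right option $H^R$ of $H$ with $\overline{n-1} \geq H^R$ — and such an $H^R$ should be extractable from a terminal run of $H$ of length $n$ passing through a position with terminal length $\{n-1\}$. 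The main case is when $G$ is genuinely $1$-flexible: here every Right option $G^R$ of $G$ is an integer, say $G^R = \overline{m}$ with $\terminal(G^R) = \{m\}$. By \cref{lem:terminal-subset}, $m+1 \in \terminal(G) \subseteq \terminal(H)$, so $H$ admits a terminal run of length $m+1$; its first step lands on a Right option $H^R$ of $H$ that sits inside a terminal run of length $m$, giving $m \in \terminal(H^R)$. The goal is then to verify $\overline{m} = G^R \geq H^R$ so that \cref{thm:reduced} applies.

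The hard part will be establishing $\overline{m} \geq H^R$ in that last step. The subtlety is that knowing merely $m \in \terminal(H^R)$ is \emph{not} enough to conclude $\overline{m} \geq H^R$ in general — that comparison would require something like $\terminal(\overline{m}) = \{m\} \subseteq \terminal(H^R)$ together with $H^R$ itself being suitably controlled, and $\overline{m} \geq H^R$ via \cref{thm:subset-terminal-lengths} actually needs $\{m\} \subseteq \terminal(H^R)$, which is exactly $m \in \terminal(H^R)$. So the clean route is: since $G^R = \overline{m}$ is an integer with $\flex(\overline{m}) = 0 \leq 1$, and $\terminal(\overline{m}) = \{m\} \subseteq \terminal(H^R)$, I can invoke the induction hypothesis (as $\overline{m}$ has smaller birthday than $G$) to obtain $\overline{m} \geq H^R$. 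This is the key insight that makes the induction close: the weakly $1$-flexible hypothesis is inherited by the options of $G$ precisely because integers are $0$-flexible, so the inductive call is legitimate.

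Assembling these, for each Right option $G^R = \overline{m}$ of $G$ I produce a Right option $H^R$ of $H$ with $G^R \geq H^R$, and since $G \not\cong 0$ forces $H \not\cong 0$ (again because $\race(H) \leq \race(G)$ would otherwise fail, or directly since $0 \notin \terminal(G)$ and hence the relevant condition of \cref{thm:reduced} is the second clause), the comparison test delivers $G \geq H$. I expect the bookkeeping around extracting the intermediate position $H^R$ from a terminal run of $H$ — formalising "the first step of a length-$(m+1)$ terminal run yields a Right option with $m$ in its terminal lengths" — to be the only genuinely fiddly part, and it follows directly from \cref{lem:terminal-subset} applied to $H$.
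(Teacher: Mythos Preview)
Your proposal is correct and follows essentially the same route as the paper's proof: induction (on the birthday of $G$), using that every option of $G$ is an integer $\overline{m}$, then extracting an option $H'$ of $H$ with $m\in\terminal(H')$ and invoking the induction hypothesis before applying \cref{thm:reduced}. The only cosmetic difference is that you split the non-zero case into ``$G$ is an integer'' and ``$G$ is $1$-flexible'', whereas the paper handles both uniformly (in either case every option of $G$ is an integer, so the argument is identical); your phrasing via terminal runs is exactly the content of \cref{lem:terminal-subset} applied to $H$.
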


\begin{proof}
    Firstly, if $G=0$, then the hypothesis implies $H=0$ by \cref{lem:0-props},
    and so we are done.

    Otherwise, let $G'$ be an arbitrary option of $G$. Now,
    \[
        \terminal(G')\subseteq\terminal(G)-1\subseteq\terminal(H)-1
    \]
    by \cref{lem:terminal-subset} and our hypothesis. Since $\flex(G)\leq1$, it
    follows that $\flex(G')=0$; hence $G'$ must be an integer, and so
    $|\terminal(G')|=1$ by \cref{lem:integer}.  Thus, it follows again by
    \cref{lem:terminal-subset} that there must exist some option $H'$ of $H$
    such that $\terminal(G')\subseteq\terminal(H')$. By induction, $G'\geq H'$,
    and we have the result by \cref{thm:reduced}.
\end{proof}

So, to be explicit, if we consider the set of all Left dead ends whose terminal
lengths are a superset of the set $T=\{t_1,\dots,t_n\}$, then the unique,
maximal such Left dead end is given by
$\{\cdot\mid\overline{t_1},\dots,\overline{t_n}\}$. This is weakly 1-flexible;
it is 1-flexible when $n>1$, and 0-flexible otherwise.

Just as we may sometimes like to consider those options of a game of greatest
birthday, we may also like to consider those options of greatest flexibility.
We say such an option is \emph{versatile}.

\begin{definition}
    If $G$ is a Left dead end, then we say an option $G'$ of $G$ is
    \emph{versatile} if $\flex(G')\geq\flex(G)-1$.
\end{definition}

That is, a versatile option of a game $G$ is simply an option of maximum
flexibility amongst all of its options. It is immediate from \cref{def:flex}
that every non-zero Left dead end has a versatile option. Note also that, if
$G$ is not an integer, then a versatile option necessarily has flexibility
$\flex(G)-1$. But, if $G$ is an integer, then all of its subpositions have
flexibility 0.

It is a simple observation that $\flex(G)<\birth(G)$ for any non-zero Left dead
end $G$ (there is equality in the case of the empty game), which we now prove.

\begin{lemma}
    \label{lem:flex-birth}
    If $G$ is a Left dead end, then $\flex(G)\leq\birth(G)$, with equality if
    and only if $G=0$.
\end{lemma}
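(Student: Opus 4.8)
The plan is to induct on the birthday $\birth(G)$, handling the bound and the equality condition simultaneously. The base case is $\birth(G)=0$, where \cref{lem:0-props} forces $G=0$, and then $\flex(G)=0=\birth(G)$ gives the claimed equality. For the inductive step I take $\birth(G)\geq1$, so that $G\neq0$, and aim to prove the \emph{strict} inequality $\flex(G)<\birth(G)$; establishing strictness for every non-zero $G$ simultaneously yields the bound and the fact that equality holds only at $G=0$.

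Within the inductive step I split on whether $G$ is an integer. If $G$ is a non-zero integer, then $\flex(G)=0$ directly from \cref{def:flex}, and this is strictly less than $\birth(G)\geq1$. If $G$ is not an integer, I write $\flex(G)=1+\flex(G')$ for a versatile option $G'$, and combine $\birth(G)\geq1+\birth(G')$ with the inductive hypothesis applied to $G'$ to obtain $\flex(G)=1+\flex(G')\leq\birth(G')<\birth(G)$.

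The delicate point, and the step I expect to be the main obstacle, is that the inductive hypothesis only supplies the strict inequality $\flex(G')<\birth(G')$ when $G'\neq0$; were the versatile option equal to $0$, induction would give merely $\flex(G')\leq\birth(G')$ and the concluding inequality could degrade to non-strict. The resolution is to observe that, whenever $G$ is not an integer, one may always select the versatile option $G'$ to be non-zero. Indeed, if $\flex(G)\geq2$ then any versatile option has flexibility $\flex(G)-1\geq1$ and is thus non-zero; and if $\flex(G)=1$, then $\max(\flex(G'))=0$, so every option is an integer (hence every option is versatile by \cref{def:flex}), and not all of these options can equal $0$, for otherwise $G=\{\cdot\mid0\}=\overline{1}$ would itself be an integer. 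In either case a non-zero versatile option exists, the strict inequality propagates through the induction, and the proof is complete.
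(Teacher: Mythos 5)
Your proof is correct and follows essentially the same route as the paper: induction on the birthday, with the integer cases handled directly and the non-integer case reduced to a non-zero versatile option $G'$ via $\flex(G)=1+\flex(G')\leq\birth(G')<\birth(G)$. The one point where you go beyond the paper is in explicitly justifying that a non-integer Left dead end always admits a \emph{non-zero} versatile option (the paper simply asserts this), and your argument for that---if all options were $0$ then $G$ would be $\overline{1}$, an integer---is exactly the right one.
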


\begin{proof}
    It is immediate that $\flex(0)=0=\birth(0)$, and also that
    $\flex(\overline{n})=0<n=\birth(\overline{n})$ for every $n>0$. Proceeding
    via induction on the birthday, suppose $G$ is a non-integer Left dead end
    of birthday $n>1$. Let $G'$ be a non-zero versatile option of $G$; so
    $\birth(G')<\birth(G)$ (recall \cref{cor:comparable-race-birth}). By
    induction, $\flex(G')<\birth(G')$. Since $G'$ is versatile and $G$ is not
    an integer, it is clear that $\flex(G)=\flex(G')+1$. Thus,
    \begin{align*}
        \flex(G)&=\flex(G')+1\\
                &\leq\birth(G')\\
                &<\birth(G),
    \end{align*}
    which completes the proof.
\end{proof}

Just as the terminal lengths of two comparable Left dead ends are comparable,
we obtain a similar result for flexiblity.

\begin{proposition}
    \label{prop:flex-relation}
    If $G$ and $H$ are Left dead ends with $G\geq H$, then
    $\flex(G)\leq\flex(H)$.
\end{proposition}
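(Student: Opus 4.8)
The plan is to prove this by induction on the birthday of the larger game $G$, mirroring the inductive structure used for \cref{thm:subset-terminal-lengths}. The base case and the integer case are trivial: since flexibility is always non-negative, whenever $G$ is an integer we have $\flex(G)=0\leq\flex(H)$ immediately from \cref{def:flex}.

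The substantive case is when $G$ is not an integer. First I would record a key structural fact: since $G$ is a non-integer, \cref{lem:integer} gives $|\terminal(G)|>1$, and the hypothesis $G\geq H$ yields $\terminal(G)\subseteq\terminal(H)$ via \cref{thm:subset-terminal-lengths}, so $|\terminal(H)|\geq|\terminal(G)|>1$. Applying \cref{lem:integer} again, this forces $H$ to \emph{also} be a non-integer. This step is the crux of the argument, and I expect it to be the main obstacle conceptually: without knowing that $H$ is a non-integer, I cannot unfold $\flex(H)$ recursively to produce the lower bound I need, and the induction would stall. Fortunately the terminal-length machinery already developed makes this immediate.

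With both $G$ and $H$ known to be non-integers, I would select a versatile option $G'$ of $G$, so that $\flex(G')=\flex(G)-1$ by \cref{def:flex}. Because $G$ is a non-integer and hence non-zero (it is not isomorphic to $0$), \cref{thm:reduced} guarantees an option $H'$ of $H$ with $G'\geq H'$. The inductive hypothesis, applied to the pair $G'\geq H'$ (valid since $\birth(G')<\birth(G)$ by \cref{cor:comparable-race-birth}), gives $\flex(G')\leq\flex(H')$. Finally, since $H$ is a non-integer, \cref{def:flex} yields
\[
    \flex(H)\geq1+\flex(H')\geq1+\flex(G')=\flex(G),
\]
which completes the induction and hence the proof.
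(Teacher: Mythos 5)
Your proof is correct and follows essentially the same route as the paper's: induction on birthday, a versatile option of $G$ matched via \cref{thm:reduced} to an option of $H$, and the terminal-lengths machinery (\cref{thm:subset-terminal-lengths} with \cref{lem:integer}) to dispose of the integer case. The only cosmetic differences are that the paper phrases the induction as a minimal counter-example minimising $\birth(G+H)$ and cases on $H$ being an integer (showing $G$ must then be one too), whereas you case on $G$ and show $H$ is a non-integer; the content is identical.
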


\begin{proof}
    Suppose first that $\flex(H)=0$; so $H$ is an integer $\overline{n}$, with
    terminal lengths $\terminal(H)=\{n\}$. Since $G\geq H$, it follows from
    \cref{thm:subset-terminal-lengths} that $\terminal(G)=\{n\}$, and so
    $G=\overline{n}$ by \cref{lem:integer}, yielding the result. Assume now
    that $\flex(H)>0$.

    Suppose, for a contradiction, that $\flex(G)>\flex(H)$. We may assume
    $(G,H)$ is a counter-example minimising the birthday of $G+H$. Let $G'$ be
    some versatile option of $G$. By \cref{thm:reduced}, there must exist some
    option $H'$ of $H$ with $G'\geq H'$. Since $\birth(G'+H')<\birth(G+H)$, it
    follows that $\flex(G')\leq\flex(H')$. But $G'$ is versatile, so, by
    supposition,
    \begin{align*}
        \flex(G')&=\flex(G)-1\\
                 &\geq\flex(H)\\
                 &>\flex(H'),
    \end{align*}
    which is the contradiction we were after.
\end{proof}

Speaking practically, this result tells us that when we are looking to remove
dominated options from a Left dead end to arrive at a canonical form, an option
can never be dominated by a less flexible one. It is also immediate that
flexibility is an invariant up to equality, as opposed to just
isomorphism---much like the race and birthday (and terminal lengths) of Left
dead ends.

It can sometimes be useful to view flexibility as the length of a longest run
that doesn't visit an integer except at the end, which we make precise here.

\begin{definition}
    \label{def:flex-run}
    If $G$ is a Left dead end, then we say a run $(G_0,G_1,\dots,G_n)$ of $G$
    of length $n>0$ is \emph{flexible} if $G_i$ is not an integer for $i<n$. We
    define all runs of length 0 to be flexible.

    Furthermore, we say that a flexible run is \emph{maximal} if it ends at an
    integer; that is, if it cannot be extended to a longer flexible run. If a
    flexible run is not maximal, then we say it is \emph{extendable}.
\end{definition}

\begin{lemma}
    \label{lem:flex-is-longest-flex-run}
    If $G$ is a Left dead end, then $\flex(G)$ is equal to the length of a
    longest flexible run of $G$.
\end{lemma}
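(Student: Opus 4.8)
The plan is to proceed by induction on the birthday of $G$, establishing both inequalities simultaneously by matching flexible runs of $G$ against flexible runs of its options. The recursive shape of \cref{def:flex}---one plus the maximum flexibility of an option---mirrors the recursive structure of runs exactly, so the two quantities ought to be forced to agree.

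For the base case I would take $G$ to be an integer, so that $\flex(G)=0$. Here the only flexible run is the trivial run of length $0$: any run of length $n>0$ would require $G_0\cong G$ to be a non-integer (as $0<n$), which fails since $G$ is an integer. Hence the longest flexible run has length $0=\flex(G)$, as needed.

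For the inductive step, suppose $G$ is not an integer, so that $\flex(G)=1+\max_{G'}\flex(G')\geq1$, where $G'$ ranges over the options of $G$. To bound the longest flexible run from below, I would take a versatile option $G^\ast$, so $\flex(G^\ast)=\flex(G)-1$, together with a longest flexible run $(G^\ast_0,\dots,G^\ast_m)$ of $G^\ast$ (so $G^\ast_0\cong G^\ast$ and $m=\flex(G^\ast)$ by the inductive hypothesis). Prepending $G$ yields the run $(G,G^\ast_0,\dots,G^\ast_m)$ of length $m+1=\flex(G)$; this run is flexible, since $G$ is a non-integer and $G^\ast_0,\dots,G^\ast_{m-1}$ are non-final terms of a flexible run, hence non-integers, while the possibly-integer endpoint $G^\ast_m$ has simply become the final term, where integrality is permitted. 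For the reverse bound, given any flexible run $(G_0,\dots,G_n)$ of $G$ with $n>0$, I would strip the first term to obtain $(G_1,\dots,G_n)$, a run of the option $G_1$; it remains flexible because its interior terms inherit non-integrality from the original run, so by induction $n-1\leq\flex(G_1)\leq\max_{G'}\flex(G')$, whence $n\leq\flex(G)$. The trivial run of length $0$ causes no trouble, as $\flex(G)\geq1$. Combining the two bounds closes the induction.

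The main obstacle, though a minor one, will be the bookkeeping forced by the asymmetric condition in \cref{def:flex-run}: a flexible run forbids integers only strictly before its final term. I must verify that prepending $G$ does not promote the old (possibly integer) endpoint into an illegal interior integer---it does not, since that endpoint stays terminal---and, in the converse direction, that the stripped run is still flexible, which again holds because only interior positions are constrained. Keeping these boundary indices straight is the one place the argument could slip.
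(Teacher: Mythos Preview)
Your proof is correct and follows essentially the same approach as the paper: both construct a flexible run of length $\flex(G)$ by picking successive versatile options, and both bound arbitrary flexible runs by observing that flexibility drops at each step. The only cosmetic difference is that the paper compresses your induction into the single observation that $\flex(G_i)>\flex(G_{i+1})$ for every $i<n$ along a flexible run (since $G_i$ is a non-integer, $\flex(G_i)=1+\max_{G'}\flex(G')\geq 1+\flex(G_{i+1})$), which immediately forces $n\leq\flex(G)$ without an explicit inductive unwrapping; your argument spells this out step by step.
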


\begin{proof}
    If $G$ is an integer, then the result is immediate from comparing
    \cref{def:flex,def:flex-run}.

    Suppose now that $G$ is not an integer. In any flexible run
    $(G_0,G_1,\dots,G_n)$ of $G$, it is clear that $\flex(G_i)>\flex(G_{i+1})$
    for every $0\leq i<n$. Thus, the length of a flexible run will be maximised
    precisely by picking successive versatile options; i.e.\ pick $G_i$ to be a
    versatile option of $G_{i-1}$ for every $0<i\leq n$. This will yield a
    flexible run of length $\flex(G)$, and hence also the result.
\end{proof}

It will be convenient to also give names to those options of minimum race and
maximum birthday. We have need only for the latter at this moment, but we
introduce both now given their similarity; and we shall need the former later
on.

\begin{definition}
    \label{def:racing-option}
    If $G$ is a Left dead end, then we define a \emph{racing option} of $G$ to
    be an option $G'$ such that $\race(G')=\race(G)-1$.
\end{definition}

\begin{definition}
    \label{def:stalling-option}
    If $G$ is a Left dead end, then we define a \emph{stalling option} of $G$
    to be an option $G'$ such that $\birth(G')=\birth(G)-1$.
\end{definition}

So, a racing option is an option that moves towards the empty game as quickly
as possible, whereas a stalling option is an option that keeps as far away as
possible from the empty game. It is immediate from the definition of race
(\cref{def:race}) and the definition of birthday (and its equivalence with
formal birthday on the set of Left dead ends via
\cref{cor:comparable-race-birth}) that every non-zero Left dead end has a
racing option and a stalling option (which are not necessarily distinct).

\begin{proposition}
    \label{prop:flex-sums}
    If $G$ and $H$ are non-integer Left dead ends, then
    \begin{enumerate}
        \item
            $\flex(G+\overline{n})=\flex(G)+n$; and
        \item
            $\flex(G+H)=\max({\birth(G)+\flex(H),\flex(G)+\birth(H)})$,
    \end{enumerate}
    for all $n\geq0$.
\end{proposition}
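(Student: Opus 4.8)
The plan is to prove both identities using the characterisation of flexibility as the length of a longest flexible run (\cref{lem:flex-is-longest-flex-run}), together with one auxiliary observation: for Left dead ends $A$ and $B$, the sum $A+B$ is an integer if and only if both $A$ and $B$ are integers. This follows immediately from $\terminal(A+B)=\terminal(A)+\terminal(B)$ (\cref{prop:terminal-length-sums}) and the fact that a sumset is a singleton precisely when both summands are (via \cref{lem:integer}). I will also repeatedly use that, once a run reaches an integer, every subsequent position is an integer, since integers have only integer options; and that a longest flexible run must terminate at an integer, since a non-integer Left dead end is non-zero and hence has an option, so any flexible run ending at a non-integer could be extended while staying flexible.

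For the lower bounds I would exhibit explicit flexible runs. For (1), first play out the whole $\overline{n}$ component, $\overline{n}\to\overline{n-1}\to\dots\to0$; throughout these $n$ moves the $G$ component remains the non-integer game $G$, so every position is non-integer. Then append a longest flexible run of $G$, of length $\flex(G)$, during which the integer component is $0$, so the positions stay non-integer until the last. This yields a flexible run of length $n+\flex(G)$. For (2), by symmetry it suffices to produce a flexible run of length $\birth(G)+\flex(H)$: drive $G$ all the way to $0$ in $\birth(G)$ moves (the $H$ component remains the non-integer $H$, so all positions are non-integer), then append a longest flexible run of $H$ of length $\flex(H)$ (now the $G$ component is the integer $0$). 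Both constructions crucially keep one component non-integer as a cover while the other is driven down, so the ordering matters.

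The upper bounds are where the real work lies. Given any longest flexible run of the sum, I would split it into the moves played in each component, giving a run of length $p$ in one component and $q$ in the other, with total length $p+q$. Since a longest flexible run ends at an integer, both components end at integers, and this all-integer (hence integer) position occurs only at the final step. The key claim is that the component whose first-integer moment coincides with the final step performs, in isolation, a flexible run: were that component to reach an integer earlier, all its later positions would be integers too, and, as the other component is already an integer at the end, we would obtain an integer position strictly before the end, contradicting flexibility. Thus that component's run is flexible and bounded by its flexibility, while the other component's run is bounded by its birthday; summing gives $\birth(G)+\flex(H)$ or $\flex(G)+\birth(H)$ according to which component terminates last, and hence the maximum. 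For (1) the same count applies, with the integer summand $\overline{n}$ contributing at most $n$ moves in place of a birthday bound, yielding $\flex(G)+n$.

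The step requiring the most care is this upper bound in (2): one must correctly identify which component is last to become an integer and argue that its isolated run is genuinely flexible, leaning on the observation that integers beget only integers. I would also note that the hypothesis that both $G$ and $H$ are non-integer is essential for (2): if one summand were an integer the second formula would fail (indeed, by (1), $\flex(G+\overline{n})=\flex(G)+n$, which need not equal $\max(\birth(G),\flex(G)+n)$), which is precisely why (1) is stated separately and handled by the same run-counting with a birthday bound replaced by the length of the integer.
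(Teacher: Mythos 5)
Your proof is correct. For part (2) it is essentially the paper's argument: the lower bound via the explicit run (stall on one component down to $0$, then play versatilely on the other) is identical, and your upper bound---decompose a longest flexible run of $G+H$ into its two component runs, note that the component whose first-integer moment is the final step traces out a genuinely flexible run in isolation (hence contributes at most its flexibility) while the other contributes at most its birthday---is the same count that the paper runs instead as a proof by contradiction on an extendable flexible run of length $M=\birth(G)+\flex(H)$. Your direct phrasing is arguably cleaner, and your auxiliary observation that $A+B$ is an integer if and only if both $A$ and $B$ are (via \cref{prop:terminal-length-sums} and \cref{lem:integer}) makes explicit a fact the paper uses only implicitly. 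Where you genuinely diverge is part (1): the paper proves it by induction on $n$ using the identity $G+\overline{n}=\{\cdot\mid G+\overline{n-1}\}$ from \cref{prop:one-option}, together with the invariance of flexibility under equality (\cref{prop:flex-relation}), whereas you reuse the run-decomposition with the birthday bound replaced by the length $n$ of the integer component. Your route has the merit of treating both parts uniformly and of not leaning on the special behaviour of $\overline{1}$; the paper's is shorter given that \cref{prop:one-option} is already in hand. Both are valid.
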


\begin{proof}
    First we prove (1) by induction on $n$. If $n=0$, then the result is
    trivial. Assume now that $n>0$.

    Recall by \cref{prop:one-option} that $G+\overline{n}=\{\cdot\mid
    G+\overline{n-1}\}$. It then follows that
    \begin{align*}
        \flex(G+\overline{n})&=\flex(\{\cdot\mid G+\overline{n-1})
            \quad\text{[\cref{prop:flex-relation}]}\\
        &=1+\flex(G+\overline{n-1}) \quad\text{[\cref{def:flex}]}\\
        &=\flex(G)+n \quad\text{[by induction]}.
    \end{align*}

    We now prove (2). Suppose, without loss of generality, that
    \[
        \birth(G)+\flex(H)\geq\flex(G)+\birth(H),
    \]
    where we now write $M$ for $\birth(G)+\flex(H)$. First we will show that
    $\flex(G+H)\geq M$. Consider a run on $G+H$ given by playing $\birth(G)$
    stalling moves on $G$, followed by $\flex(H)$ versatile moves on $H$. Since
    neither $G$ nor $H$ is an integer, it follows clearly that this is a
    (maximal) flexible run of $G+H$. So, $\flex(G+H)\geq M$ by
    \cref{lem:flex-is-longest-flex-run}.

    Suppose now, for a contradiction, that $\flex(G+H)>M$. Then, again by
    \cref{lem:flex-is-longest-flex-run}, there must exist an extendable
    flexible run of $G+H$ of length $M$. In such a run, at most $\birth(G)$
    moves may ever be played on $G$. If exactly $\birth(G)$ moves were to be
    played on $G$, then exactly $\flex(H)$ moves would be played on $H$, and so
    it is clear that the run would not be extendable. Thus, for some $k>0$,
    there are $\birth(G)-k$ moves played on $G$, and $\flex(H)+k$ moves played
    on $H$; necessarily,
    \begin{equation}
        \label{eq:1}
        \tag{i}
        \flex(H)+k\leq\birth(H).
    \end{equation}
    If $\birth(G)-k\geq\flex(G)$, then it is clear that, again, the run would
    not be extendable. Thus,
    \begin{equation}
        \label{eq:2}
        \tag{ii}
        \birth(G)-k<\flex(G).
    \end{equation}
    But adding \cref{eq:1,eq:2} yields
    \[
        \birth(G)+\flex(H)<\flex(G)+\birth(H),
    \]
    contradicting our earlier supposition, and completing the proof.
\end{proof}

In the proof above, we could have proved the second item by induction as well,
but it doesn't provide much more efficiency, and it is worthwhile to see both
techniques here.

Note that the first item of \cref{prop:flex-sums} could be written as
$\flex(G+\overline{n})=\flex(G)+\birth(\overline{n})$ to look more similar to
the second item. But it is important to realise that they are not the same:
considering the example $\{\cdot\mid0,\overline{2}\}+\overline{1}$, we see that
\begin{align*}
    \flex(\{\cdot\mid0,\overline{2}\}+\overline{1})&=\flex(\{\cdot\mid0,\overline{2}\})+1\\
    &=2,
\end{align*}
but $\birth(\{\cdot\mid0,\overline{2}\})+\flex(\overline{1})=3$. (And hence we
do need the distinction between the two items of \cref{prop:flex-sums}.)

\subsection{Using flexibility to find atoms}

We now seek to investigate the behaviour of Left dead ends of different
flexibilities, and, importantly, find some atoms. Perhaps surprisingly, every
1-flexible Left dead end is an atom, which will be yielded swiftly by the
following proposition.

\begin{proposition}
    \label{prop:flex-birth-ineq}
    If $G,H,$ and $K$ are Left dead ends such that
    \begin{enumerate}
        \item
            $H$ is non-zero;
        \item
            $K$ is not an integer; and
        \item
            $H+K\geq G$;
    \end{enumerate}
    then $\birth(H)<\flex(G)$.
\end{proposition}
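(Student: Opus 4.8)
The plan is to combine the monotonicity of flexibility under $\geq$ (\cref{prop:flex-relation}) with the formula for the flexibility of a sum (\cref{prop:flex-sums}). From hypothesis (3) we have $H+K\geq G$, so \cref{prop:flex-relation} gives $\flex(G)\geq\flex(H+K)$. It then suffices to bound $\flex(H+K)$ from below by something exceeding $\birth(H)$; in fact I would aim for the stronger inequality $\flex(H+K)\geq\birth(H)+\flex(K)$. Once that is in hand, the result follows because $K$ is not an integer, whence $\flex(K)\geq1$ directly from \cref{def:flex} (as $\flex(K)=0$ holds exactly when $K$ is an integer).

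To establish $\flex(H+K)\geq\birth(H)+\flex(K)$, I would split on whether $H$ is an integer. If $H$ is an integer, then since $H$ is non-zero (hypothesis (1)) we have $H=\overline{n}$ for some $n\geq1$ with $\birth(H)=n$; as $K$ is a non-integer, \cref{prop:flex-sums}(1) applies with the roles $G\mapsto K$ and the integer $\overline{n}\mapsto H$, giving $\flex(H+K)=\flex(K)+n=\flex(K)+\birth(H)$, which is the desired bound with equality. If instead $H$ is a non-integer, then both $H$ and $K$ are non-integers, so \cref{prop:flex-sums}(2) applies and yields $\flex(H+K)=\max(\birth(H)+\flex(K),\,\flex(H)+\birth(K))\geq\birth(H)+\flex(K)$. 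Either way the bound holds.

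Chaining these together gives
\[
    \flex(G)\geq\flex(H+K)\geq\birth(H)+\flex(K)\geq\birth(H)+1>\birth(H),
\]
which is exactly the conclusion.

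The only real subtlety — and the step I would be most careful about — is the case distinction on whether $H$ is an integer, since \cref{prop:flex-sums}(2) requires \emph{both} summands to be non-integers and so cannot be invoked directly when $H=\overline{n}$. Hypothesis (1) is precisely what forces $n\geq1$ in that branch, but it is the non-integrality of $K$ (hypothesis (2)) that does the essential work, supplying the strict inequality through $\flex(K)\geq1$. No delicate game-theoretic argument is needed here: the entire proof is an application of two previously established structural facts about flexibility.
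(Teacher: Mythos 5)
Your proof is correct, but it takes a genuinely different route from the paper. You argue top-down from two structural facts about flexibility: monotonicity under $\geq$ (\cref{prop:flex-relation}) gives $\flex(G)\geq\flex(H+K)$, and the sum formula (\cref{prop:flex-sums}) gives $\flex(H+K)\geq\birth(H)+\flex(K)$, with the case split on whether $H$ is an integer handled exactly as it needs to be (part (1) of \cref{prop:flex-sums} when $H=\overline{n}$, part (2) when both summands are non-integers); the strict inequality then comes from $\flex(K)\geq1$. The paper instead argues by contradiction at the level of runs and terminal lengths: assuming $\birth(H)\geq\flex(G)$, it plays $\flex(G)$ moves entirely on the $H$ component, invokes \cref{cor:terminal-comparison} to produce a matching run on $G$ ending at a position $G'$ that must be an integer (so $|\terminal(G')|=1$), and derives a contradiction from $|\terminal(H'+K)|\geq|\terminal(K)|>1$ via \cref{cor:terminal-length-sum-size} and \cref{lem:integer}. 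There is no circularity in your version, since \cref{prop:flex-relation} and \cref{prop:flex-sums} are both established before this proposition and neither depends on it. What each approach buys: the paper's argument is self-contained at the level of terminal lengths and does not need the sum formula; yours is shorter, and it actually proves the sharper quantitative bound $\flex(G)\geq\birth(H)+\flex(K)$, which strictly improves the stated conclusion whenever $\flex(K)>1$.
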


\begin{proof}
    Suppose, for a contradiction, that $\birth(H)\geq\flex(G)$. Then, we may
    take a run of $H+K$ of length $\flex(G)$ played entirely on the $H$
    component; say to $H'+K$. By \cref{cor:terminal-comparison}, since $H+K\geq
    G$, there exists a run of $G$ of length $\flex(G)$ such that, in
    particular, the resulting position $G'$ satisfies
    $\terminal(H'+K)\subseteq\terminal(G')$. By definition of $\flex(G)$, the
    position $G'$ must be an integer; so $|\terminal(G')|=1$ by
    \cref{lem:integer}. But $K$ is not an integer by hypothesis, and so
    $|\terminal(H'+K)|\geq|\terminal(K)|>1$ by
    \cref{cor:terminal-length-sum-size} and, again, \cref{lem:integer}. It is
    thus clear that $\terminal(H'+K)\not\subseteq\terminal(G')$, which is a
    contradiction, yielding the result.
\end{proof}

\cref{prop:flex-birth-ineq} is saying something peculiar: if $G$, $H$, and $K$
are Left dead ends (forgetting the condition on $K$ in
\cref{prop:flex-birth-ineq}), where $H+K$ is a molecule and $H+K\geq G$, then,
letting $\mathcal{A}$ denote the set of all atoms that appear in factorisations
of $H+K$, it follows that
\[
    \birth(H+K)<\flex(G)+\min\{\birth(J):\overline{1}\neq J\in\mathcal{A}\}.
\]
Of course, $\min\{\birth(J):\overline{1}\neq J\in\mathcal{A}\}$ could be
arbitrarily large, but it is curious that the least birthday of a non-integer
part of $H+K$, along with the flexibility of $G$, could work together to bound
the birthday of $H+K$ in this way.

We now prove some simple corollaries than can oft be useful in determining
whether games are atoms or not.

\begin{corollary}
    \label{cor:level-molecules}
    If $G$ is a Left dead end with more than one good option, then, for any
    non-zero Left dead ends $H$ and $K$ with $H+K\geq G$, it holds that
    $\birth(H),\birth(K)<\flex(G)$. Additionally, if $H+K=G$, then
    $1<\birth(H),\birth(K)$.
\end{corollary}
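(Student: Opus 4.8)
The plan is to derive both assertions from \cref{prop:flex-birth-ineq}, using \cref{prop:one-option} to clear away the degenerate configurations in which a summand is an integer. I would begin by recording the one structural fact that the hypothesis buys us: since $G$ has more than one good option, \cref{lem:integer} tells us $G$ is not an integer, so $\flex(G)\geq1$; this is what makes the claimed strict bound meaningful. For the birthday bound itself, the two inequalities $\birth(H)<\flex(G)$ and $\birth(K)<\flex(G)$ are symmetric in $H$ and $K$, so it suffices to bound $\birth(H)$ and then repeat the argument with the roles exchanged.

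First I would dispatch the principal case directly: if $K$ is \emph{not} an integer, then conditions (1)--(3) of \cref{prop:flex-birth-ineq} hold verbatim for this $G$, $H$, $K$ (note $H$ is non-zero by hypothesis), and we read off $\birth(H)<\flex(G)$ immediately; symmetrically, if $H$ is not an integer we obtain $\birth(K)<\flex(G)$. The hard part will be ruling out the situation where a summand is an integer, since then \cref{prop:flex-birth-ineq} cannot be applied with that summand in the role of $K$. This is exactly where the \emph{more than one good option} hypothesis is designed to intervene. In the factorisation regime $H+K=G$ that we actually need, I would argue that no summand can be an integer: if, say, $K\cong\overline{m}$ with $m\geq1$, then by \cref{prop:one-option} we have $G=H+\overline{m}=\{\cdot\mid H+\overline{m-1}\}$, which has exactly one good option by \cref{prop:one-good-option}, contradicting the assumption on $G$. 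Hence both $H$ and $K$ are non-integers, and \cref{prop:flex-birth-ineq} applies in both directions to give $\birth(H),\birth(K)<\flex(G)$.

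For the additional claim, suppose $H+K=G$ and, towards a contradiction, that one summand has birthday $1$; since the only Left dead end of birthday $1$ is $\overline{1}=\{\cdot\mid0\}$, this means (say) $H\cong\overline{1}$. Then \cref{prop:one-option} gives $G=\overline{1}+K=\{\cdot\mid K\}$, which again has exactly one good option, contradicting the hypothesis that $G$ has more than one. As $H$ and $K$ are non-zero they have birthday at least $1$, and we have just excluded birthday exactly $1$, so $1<\birth(H)$ and, symmetrically, $1<\birth(K)$. The whole proof thus pivots on a single observation, namely that \cref{prop:one-option} collapses any sum involving an integer summand to a game with a unique good option; the anticipated obstacle is precisely marshalling this fact to eliminate the integer cases so that \cref{prop:flex-birth-ineq} can be brought to bear symmetrically on both $H$ and $K$.
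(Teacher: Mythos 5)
Your proposal follows essentially the same route as the paper: rule out integer summands via \cref{prop:one-option} and \cref{prop:one-good-option} so that \cref{prop:flex-birth-ineq} can be applied symmetrically in $H$ and $K$, and, for the final claim, observe that a birthday-$1$ summand would have to be $\overline{1}$ and hence force $G$ to have a unique good option. The one point worth dwelling on is the case you explicitly set aside: when $H+K\geq G$ is not an equality, your argument that no summand is an integer does not apply, and you say only that the equality case is ``what we actually need.'' You are right to be uneasy there, and in fact you have put your finger on a defect in the statement itself: the first clause is false for mere $\geq$. Take $G=W_2=\{\cdot\mid0,\overline{1}\}$, which has two good options, and $H=K=\overline{1}$; then $H+K=\overline{2}\geq W_2$ (check \cref{thm:reduced}: the unique option $\overline{1}$ of $\overline{2}$ satisfies $\overline{1}\geq\overline{1}$), yet $\birth(H)=1\not<1=\flex(G)$. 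The paper's own proof has the same hole---``it cannot be split by $\overline{1}$'' is an argument about writing $G=H+K$, not about $H+K\geq G$---and the corollary is only ever invoked later with equality, so nothing downstream is damaged. Modulo reading the first clause with $H+K=G$ (as the paper's proof implicitly does), your argument is complete and matches the paper's.
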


\begin{proof}
    Since $G$ has more than one good option, it follows from
    \cref{prop:one-option} that it cannot be split by $\overline{1}$; hence
    neither $H$ nor $K$ is an integer. We then obtain
    $\birth(H),\birth(K)<\flex(G)$ from \cref{prop:flex-birth-ineq}.

    Suppose now that $H+K=G$. If either $H$ or $K$ had birthday 1, then they
    would be equal to $\overline{1}$; hence $G$ would have exactly one good
    option by \cref{prop:one-good-option}, contradicting the hypothesis.
\end{proof}

\begin{corollary}
    \label{cor:big-birth-atom}
    If $G$ is a Left dead end with more than one good option, and
    $\birth(G)>2\cdot\flex(G)-2$, then $G$ is an atom.
\end{corollary}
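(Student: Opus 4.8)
The plan is to argue by contraposition through a short contradiction: assume $G$ is \emph{not} an atom and derive a violation of the birthday hypothesis. Since $\mathcal{L}$ is reduced, the only unit is $0$, so if $G$ fails to be an atom then we may write $G=H+K$ for some non-zero Left dead ends $H$ and $K$. The whole of the work will be in bounding $\birth(H)$ and $\birth(K)$ from above, and this is precisely what \cref{cor:level-molecules} hands us.

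Concretely, because $G$ has more than one good option and $H+K=G\geq G$ (so in particular $H+K\geq G$), \cref{cor:level-molecules} applies directly to give $\birth(H)<\flex(G)$ and $\birth(K)<\flex(G)$. Since birthdays are non-negative integers, these strict inequalities upgrade to $\birth(H)\leq\flex(G)-1$ and $\birth(K)\leq\flex(G)-1$. I would then invoke the additivity of the birthday on Left dead ends, namely $\birth(G)=\birth(H)+\birth(K)$ from \cref{prop:terminal-length-sums}, to obtain
\[
    \birth(G)=\birth(H)+\birth(K)\leq\bigl(\flex(G)-1\bigr)+\bigl(\flex(G)-1\bigr)=2\cdot\flex(G)-2.
\]
This contradicts the assumption $\birth(G)>2\cdot\flex(G)-2$, so no such decomposition $G=H+K$ can exist, and $G$ is an atom.

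There is no genuine obstacle here; the corollary is essentially immediate once \cref{cor:level-molecules} and the additivity of $\birth$ are in hand, and the only subtlety worth flagging is the integrality step that turns the two strict inequalities into the additive bound $2\cdot\flex(G)-2$ rather than the weaker $2\cdot\flex(G)$. The hypothesis on having more than one good option is exactly the trigger needed to invoke \cref{cor:level-molecules} (which internally rules out a split by $\overline{1}$ via \cref{prop:one-option}), so I would not need to treat the single-good-option case here at all.
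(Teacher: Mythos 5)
Your proposal is correct and follows essentially the same route as the paper: suppose $G=H+K$ with $H,K$ non-zero, invoke \cref{cor:level-molecules} to get $\birth(H),\birth(K)<\flex(G)$, and combine with the additivity $\birth(H+K)=\birth(H)+\birth(K)$ from \cref{prop:terminal-length-sums} to contradict the birthday hypothesis. Your explicit integrality step ($<\flex(G)$ upgraded to $\leq\flex(G)-1$) is the same arithmetic the paper performs when it sandwiches $\birth(G)$ strictly between $2\cdot\flex(G)-2$ and $2\cdot\flex(G)-1$.
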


\begin{proof}
    Suppose, for a contradiction, that $G=H+K$ for some non-zero Left dead ends
    $H$ and $K$. By \cref{cor:level-molecules}, since $G$ has more than one
    good option, it follows that $\birth(H),\birth(K)<\flex(G)$. But
    \[
        2\cdot\flex(G)-2<\birth(G)=\birth(H+K)=\birth(H)+\birth(K)<2\cdot\flex(G)-1
    \]
    by \cref{prop:terminal-length-sums}, which is the contradiction we were
    seeking.
\end{proof}

These results let us begin to classify those $k$-flexible games that are atoms.
We could, of course, extend the following indefinitely, but we just give a few
examples for some low flexibilities that can be helpful in practice when
analysing games.

\begin{proposition}
    If $G$ is a Left dead end, then the following statements are true:
    \begin{itemize}
        \item
            if $\flex(G)=1$, then $G$ is an atom;
        \item
            if $\flex(G)=2$, then either $G$ has exactly one good option (i.e.\
            $\overline{1}\mid G$), or $G$ is an atom;
        \item
            if $\flex(G)=3$, then either $G$ has exactly one good option,
            $G=2\cdot W_2$, or $G$ is an atom.
    \end{itemize}
\end{proposition}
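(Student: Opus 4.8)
The plan is to treat each bullet as a statement about non-integer Left dead ends (integers have flexibility $0$ and are excluded) and to reduce everything to two engines: \cref{lem:flex-birth}, which gives $\birth(G)\geq\flex(G)+1$ for every non-zero $G$, and \cref{cor:big-birth-atom}, which declares $G$ an atom as soon as it has more than one good option and $\birth(G)>2\flex(G)-2$. The former supplies a lower bound on the birthday; the latter an upper threshold above which atomicity is automatic. For small flexibility these bounds nearly pinch together, so only a bounded window of birthdays can ever escape \cref{cor:big-birth-atom}, and that window is examined by hand.

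For $\flex(G)=1$, I would first rule out $G$ having exactly one good option: by \cref{prop:one-good-option} this would mean $G=\overline1+G'$, whence \cref{prop:flex-sums} forces $\flex(G)$ to be either $0$ (if $G'$ is an integer) or at least $2$ (if not), never $1$. Hence $G$ has more than one good option, and since $\birth(G)\geq2>0=2\flex(G)-2$ by \cref{lem:flex-birth}, \cref{cor:big-birth-atom} yields that $G$ is an atom. For $\flex(G)=2$, either $G$ has one good option---equivalently $\overline1\mid G$ by \cref{prop:one-good-option}---or it has more than one, in which case \cref{lem:flex-birth} gives $\birth(G)\geq3>2=2\flex(G)-2$ and \cref{cor:big-birth-atom} again produces an atom. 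No boundary case survives in either of these.

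The only real work is in $\flex(G)=3$. Assuming $G$ has more than one good option, \cref{lem:flex-birth} gives $\birth(G)\geq4$, and \cref{cor:big-birth-atom} disposes of every case with $\birth(G)\geq5$ (since $2\flex(G)-2=4$), leaving exactly $\birth(G)=4$. In that single remaining case I would suppose $G$ is not an atom, so $G=H+K$ for non-zero Left dead ends $H,K$; \cref{cor:level-molecules} then forces $1<\birth(H),\birth(K)<3$, i.e.\ $\birth(H)=\birth(K)=2$, consistent with $\birth(H)+\birth(K)=4$ via \cref{prop:terminal-length-sums}. A short classification (the only Left dead ends of birthday at most $1$ being $0$ and $\overline1$) shows the only Left dead ends of birthday $2$ are $\overline2$ and $W_2$. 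If either factor were $\overline2=2\cdot\overline1$ then $\overline1\mid G$, giving $G$ a single good option by \cref{prop:one-good-option} and contradicting our assumption; hence $H=K=W_2$ and $G=2\cdot W_2$, precisely the listed exception.

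The main obstacle is this last boundary computation: everything above it is a mechanical interplay of the two birthday bounds, but settling $\birth(G)=4$ requires both the structural input of \cref{cor:level-molecules} (to trap the factors at birthday $2$) and the primality of $\overline1$, delivered through \cref{prop:one-good-option}, to exclude $\overline2$ and isolate $2\cdot W_2$ as the sole non-atomic possibility.
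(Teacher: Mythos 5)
Your proposal is correct and follows essentially the same route as the paper: \cref{lem:flex-birth} plus \cref{cor:big-birth-atom} dispose of the first two bullets, and the third is settled by trapping the factors at birthday $2$ via \cref{cor:level-molecules}, classifying the birthday-$2$ Left dead ends, and using \cref{prop:one-good-option} to exclude $\overline{2}$. The only cosmetic differences are that you rule out the single-good-option case for $\flex(G)=1$ via \cref{prop:flex-sums} where the paper just notes all options are integers, and your intermediate reduction to $\birth(G)=4$ in the third bullet is harmless but unnecessary once \cref{cor:level-molecules} is invoked.
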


\begin{proof}
    Suppose first that $\flex(G)=1$. Since each option of $G$ is an integer, it
    must follow that $G$ has more than one good move, or else $G$ would be an
    integer, and hence would be 0-flexible. It is then immediate from
    \cref{cor:big-birth-atom} that $G$ must be an atom.

    Suppose now that $\flex(G)=2$, and that $G$ has more than one good option.
    By \cref{lem:flex-birth}, we know that $2<\birth(G)$, and hence $G$ is an
    atom by \cref{cor:big-birth-atom}.

    Finally, suppose that $\flex(G)=3$. If $G=H+K$ has more than one good
    option, where $H$ and $K$ are non-zero Left dead ends, then
    $1<\birth(H),\birth(K)<3$ by \cref{cor:level-molecules}. The only Left dead
    ends of birthday 2 are $\overline{2}$ and $W_2$. If either of $H$ or $K$
    equalled $\overline{2}$, then $G$ would have exactly one good option by
    \cref{prop:one-good-option}, which is a contradiction. Thus, $G=2\cdot
    W_2$, or else $G$ is an atom.
\end{proof}

The fact that all 1-flexible games are atoms will actually also follow
immediately from a stronger result that we prove later (see
\cref{prop:integer-option-atom}).

At the end of \cref{subsec:FFM}, one of the questions we asked was how many
atoms there are in our monoid. Indeed, given our introduction of flexibility,
we may ask a similar question: are there $n$-flexible games for every $n$; what
about $n$-flexible atoms?

Yes: if we take any non-zero, $n$-flexible $G$, then $\{\cdot\mid0,G\}$ is
$(n+1)$-flexible and is an atom by \cref{cor:race-1-atom}; and, for $n>0$,
$\{\cdot\mid G\}=\overline{1}+G$ is an $(n+1)$-flexible molecule. (We needed to
take $n>0$ in the latter statement since, if $G$ is an integer, then
$\overline{1}+G$ is also an integer, and hence it is still 0-flexible.) So, for
every $n>0$, the set of $n$-flexible atoms is (countably) infinite, and, for
every $n\neq1$, the set of $n$-flexible molecules is (countably) infinite, too.
It is worth noting, however, that there are only finitely many $n$-flexible
molecules with more than one good option, for every $n\geq0$.

We are not yet finished with our search for atoms, but we take a brief
interlude in the next section to discuss the lengths of factorisation.

\section{Lengths of factorisations}
\label{sec:factorisation-lengths}

In this short section we provide tools to bound the lengths of factorisations
of Left dead ends. Recall that we write $\longest{G}$ for the length of a
longest factorisation of $G$. We have already seen in \cref{lem:bf} that
$\longest{G}\leq\race(G)$.

Of course, a Left dead end $G$ is an atom if and only if $\longest{G}=1$; it is
a molecule if and only if (it is non-zero and) $\longest{G}\neq1$. We write
$\longest{0}=0$, since its unique factorisation is the empty sum.

\begin{lemma}
    \label{lem:bound}
    If $G$ is a Left dead end, then $\longest{G}\leq\longest{G'}+1$ for every
    good option $G'$ of $G$.
\end{lemma}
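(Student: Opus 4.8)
The plan is to start from a longest factorisation of $G$ into atoms and show that any good option $G'$ inherits a factorisation that is only one atom shorter. Concretely, if $\longest{G}=0$ then $G\cong0$ has no options and the claim is vacuous, so I would assume $n\coloneq\longest{G}\geq1$ and fix a longest factorisation $G=A_1+\dots+A_n$, where each $A_i$ is an atom (such a factorisation exists since $\mathcal{L}$ is atomic by \cref{lem:atomic}). The key move is to recognise $G'$ as an option of this particular sum-form: since $G'$ is a good option of $G$ and $G=A_1+\dots+A_n$ as game values, applying \cref{cor:good-option-replies} in its equality case (with $H$ the form $A_1+\dots+A_n$) produces a good option of $A_1+\dots+A_n$ that is equal to $G'$. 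But every option of a disjunctive sum is obtained by moving in a single component, so this forces
\[
    G'=A_1+\dots+A_{j-1}+A_j'+A_{j+1}+\dots+A_n
\]
for some index $j$ and some option $A_j'$ of the atom $A_j$.

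With $G'$ written in this form, I would then build an explicit factorisation of $G'$. The atoms $A_i$ for $i\neq j$ contribute $n-1$ atoms, and $A_j'$, being a Left dead end, admits a factorisation of some length $\longest{A_j'}\geq0$ (again by atomicity, \cref{lem:atomic}; note $A_j'$ may be $0$, in which case its factorisation is the empty sum). Concatenating these gives a factorisation of $G'$ of length $(n-1)+\longest{A_j'}\geq n-1$. Hence $\longest{G'}\geq n-1=\longest{G}-1$, which rearranges to the desired inequality $\longest{G}\leq\longest{G'}+1$.

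The only genuinely delicate point — and the step I would treat most carefully — is the identification of $G'$ with an option of the chosen sum-form $A_1+\dots+A_n$ rather than of the canonical form of $G$. This is exactly what the equality clause of \cref{cor:good-option-replies} is for: good options are well-defined up to equality, so a good option of $G$ coincides (up to `$=$') with a good option of any form representing $G$, and the good options of a sum are among its genuine sum-of-components options. Everything else is routine: the fact that $G'=X+A_j'$ implies $\longest{G'}\geq\longest{X}+\longest{A_j'}$ follows simply by concatenating longest factorisations of the summands, and $\longest{X}\geq n-1$ because $X=\sum_{i\neq j}A_i$ is already displayed as a sum of $n-1$ atoms.
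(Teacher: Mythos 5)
Your proof is correct and is essentially the paper's argument: both hinge on applying \cref{cor:good-option-replies} in its equality case to identify the good option $G'$ with a single-component option $A_1+\dots+A_j'+\dots+A_n$ of a factorisation of $G$, and then counting the $n-1$ untouched atoms to bound $\longest{G'}$ from below. The paper merely phrases this as a contradiction starting from a factorisation of length at least $\longest{G'}+2$, whereas you argue directly from a longest factorisation; the content is the same.
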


\begin{proof}
    Suppose, for a contradiction, that there exists a good option $G'$ of $G$
    and a factorisation $G_1+\dots+G_k$ of $G$ with $k\geq\longest{G'}+2$.
    Since $G'$ is a good option of $G$, there must exist, without loss of
    generality, some option $G_1'$ of $G_1$ such that $H\coloneq
    G_1'+G_2+\dots+G_k=G'$ by \cref{cor:good-option-replies}. Clearly
    $\longest{H}\geq k-1>\longest{G'}$, and hence $H\neq G'$, which is the
    contradiction we were seeking.
\end{proof}

\begin{theorem}
    \label{thm:one-option-bound}
    If $G$ is a Left dead end with exactly one good option $G'$, then
    $\longest{G}=\longest{G'}+1$.
\end{theorem}

\begin{proof}
    By \cref{prop:one-option}, it follows that $G=\overline{1}+G'$. Thus,
    writing a longest factorisation of $G'$ will yield the result immediately
    by \cref{lem:bound}.
\end{proof}

\begin{theorem}
    \label{thm:length-bound}
    If $G$ is a Left dead end with more than one good option, then
    \[
        \longest{G}\leq\min\left\{\race(G),\longest{G'}+1,|\terminal(G)|-1,\frac{\flex(G)+1}{2}\right\},
    \]
    where $G'$ ranges over all good options of $G$.
\end{theorem}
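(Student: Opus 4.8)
The plan is to establish each of the four listed quantities as a separate upper bound on $\longest{G}$ and then take their minimum. Two of these are already in hand: $\longest{G}\leq\race(G)$ is precisely \cref{lem:bf}, and $\longest{G}\leq\longest{G'}+1$ for every good option $G'$ is precisely \cref{lem:bound}. So the real work lies in the remaining two bounds, and both will rest on a single shared observation. Fix any factorisation $G=G_1+\dots+G_n$ into atoms. Since $G$ has more than one good option, \cref{prop:one-good-option} tells us that $\overline{1}\nmid G$, so $\overline{1}$ cannot appear among the $G_i$. As $\overline{1}$ is the only integer that is an atom (for $m\geq2$ we have $\overline{m}=m\cdot\overline{1}$, and $0$ is the identity), it follows that \emph{every} atom $G_i$ is a non-integer. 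I will use this fact repeatedly.

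For the bound $\longest{G}\leq|\terminal(G)|-1$, I would take $G=G_1+\dots+G_n$ to be a longest factorisation and apply \cref{prop:terminal-length-sums} to write $\terminal(G)=\terminal(G_1)+\dots+\terminal(G_n)$ as a Minkowski sum of finite sets of integers. The one-dimensional sumset inequality $|A_1+\dots+A_n|\geq\sum_i|A_i|-(n-1)$ (as in \cite[Theorem 1.4 on p.~8]{nathanson:additive}) then gives $|\terminal(G)|\geq\sum_i|\terminal(G_i)|-(n-1)$. Since each $G_i$ is a non-integer atom, \cref{lem:integer} yields $|\terminal(G_i)|\geq2$, whence $|\terminal(G)|\geq2n-(n-1)=n+1$; that is, $\longest{G}=n\leq|\terminal(G)|-1$.

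For the bound $\longest{G}\leq\tfrac{\flex(G)+1}{2}$, which I expect to be the main obstacle, the key step is a lower bound on the flexibility of a sum of non-integers. Using the second part of \cref{prop:flex-sums} together with the inequality $\flex(J)<\birth(J)$ for non-zero $J$ (\cref{lem:flex-birth}), I would first show that, for non-integer Left dead ends $G$ and $H$,
\[
    \flex(G+H)\geq\birth(G)+\flex(H)\geq\flex(G)+\flex(H)+1.
\]
An induction on the number of summands---noting that every partial sum of non-integers is again a non-integer by \cref{cor:terminal-length-sum-size}, so the hypotheses of \cref{prop:flex-sums} remain satisfied---then yields $\flex(G_1+\dots+G_n)\geq\sum_i\flex(G_i)+(n-1)$ for non-integers $G_i$. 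Applying this to a longest factorisation $G=G_1+\dots+G_n$, and using that each non-integer atom satisfies $\flex(G_i)\geq1$, gives $\flex(G)\geq n+(n-1)=2n-1$, which rearranges to $\longest{G}=n\leq\tfrac{\flex(G)+1}{2}$.

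Combining the four bounds completes the proof. The delicate point is entirely in the last bound: one must verify that the non-integrality hypothesis of \cref{prop:flex-sums} is preserved through the induction, which is exactly why the shared observation above---that no atom in a factorisation of $G$ is an integer---is indispensable.
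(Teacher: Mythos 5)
Your proof is correct. The skeleton---treating the four quantities as separate bounds, importing the first two from \cref{lem:bf} and \cref{lem:bound}, and observing via \cref{prop:one-good-option} that no atom in a factorisation of $G$ can be $\overline{1}$, hence every atom in it is a non-integer---is exactly the paper's. The $|\terminal(G)|-1$ bound is also essentially the paper's argument: the paper iterates \cref{cor:terminal-length-sum-size} to get $|\terminal(G_1+\dots+G_k)|\geq k+1$, whereas you invoke the one-dimensional sumset inequality directly; the paper itself remarks after \cref{cor:terminal-length-sum-size} that these are two views of the same fact. The one genuine divergence is the flexibility bound. The paper applies \cref{prop:flex-birth-ineq} with $H=G_1+\dots+G_{k-1}$ and $K=G_k$ to get $\flex(G)>\birth(G_1)+\dots+\birth(G_{k-1})\geq 2k-2$, using $\birth(G_i)\geq2$ from \cref{cor:level-molecules}. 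You instead extract the superadditivity $\flex(G+H)\geq\flex(G)+\flex(H)+1$ for non-integers from \cref{prop:flex-sums} and \cref{lem:flex-birth}, and sum $\flex(G_i)\geq1$ over the factors; both routes give $\flex(G)\geq 2k-1$. Your route has the modest advantage of isolating a reusable superadditivity statement and of bypassing \cref{prop:flex-birth-ineq} and \cref{cor:level-molecules} entirely, at the cost of the induction on partial sums---which you do close correctly, since a sum of two non-integers has at least three terminal lengths by \cref{cor:terminal-length-sum-size} and so is a non-integer by \cref{lem:integer}. The paper's route, by contrast, makes the equality case ($G_i=W_2$ for all $i$) easy to read off, which it records parenthetically.
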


\begin{proof}
    We know that $\longest{G}\leq\min\{\race(G),\longest{G'}+1\}$ from
    \cref{lem:bf,lem:bound}.

    We will write $G_1+\dots+G_k$ for a longest factorisation of $G$. None of
    the $G_i$ is equal to $\overline{1}$, otherwise $G$ would have exactly one
    good option by \cref{prop:one-good-option}, contradicting the hypothesis.

    We show first that $k\leq|\terminal(G)|-1$. Suppose, for a contradiction,
    that $k\geq|\terminal(G)|$. It follows from \cref{lem:integer} that
    $|\terminal(G_i)|\geq2$ for every $i$. By
    \cref{cor:terminal-length-sum-size}, it then follows that
    \begin{align*}
        |\terminal(G_1+\dots+G_k)|&\geq k+1\\
                          &\geq|\terminal(G)|+1,
    \end{align*}
    which is the contradiction we were seeking (it contradicts
    \cref{thm:subset-terminal-lengths}).

    We now show that $2\cdot k\leq\flex(G)+1$. If $G$ is an atom, then $k=1$,
    and so the inequality holds unless perhaps $\flex(G)=0$. But $G$ has more
    than one good option by hypothesis, and hence $G$ is not an integer by
    \cref{prop:one-good-option}; so, $\flex(G)\geq1$ and the inequality holds.

    Assume now that $G$ is a molecule. It follows from
    \cref{cor:level-molecules} that $\birth(G_i)\geq2$ for every $i$. By
    \cref{prop:flex-birth-ineq} and \cref{prop:terminal-length-sums}, we obtain
    \begin{align*}
        \flex(G)&>\birth(G_1+\dots+G_{k-1})\\
                &=\birth(G_1)+\dots+\birth(G_{k-1})\\
                &\geq2k-2,
    \end{align*}
    which yields the result. (Since we can reorder the elements in the
    factorisation however we like, this yields precisely that
    $\flex(G)+1=2\cdot k$ if and only if $G_i=W_2$ for every $i$: clearly
    equality implies $\birth(G_i)=2$ for every $i$, and $W_2$ is the only atom
    of birthday 2; conversely, if $G_i=W_2$ for every $i$, then $\flex(G)=2k-1$
    by \cref{prop:flex-sums}.)
\end{proof}

\cref{thm:one-option-bound,thm:length-bound} work together to provide a bound
that can be recursively computed for any Left dead end. This has been
implemented into the \texttt{gemau} software package \cite{davies:gemau}, and
the reader is invited to explore these results there.

We should also point out that each of these bounds can be strictly best, apart
from $\race(G)$ (but we include it because of its use computationally in the
recursive check). To see why $\race(G)$ cannot be strictly best, consider a
racing option $G'$ of $G$. We know $\race(G')=\race(G)-1$, and so, by
\cref{lem:bf},
\begin{align*}
    \longest{G'}+1&\leq\race(G')+1\\
                &=\race(G).
\end{align*}
So, while $\race(G)$ cannot be strictly best, it can be joint best with
$\longest{G'}+1$.

We now give examples in the table below illustrating the point we just made, as
well as cases where the other bounds are strictly best, and also where the
bounds are not tight; we have emboldened the minimal values of each row.
\begin{center}
    \begin{tabular}{ c | c c c c | c }
        $G$ & $\race(G)$ & $\longest{G'}+1$ & $|\terminal(G)|-1$ &
        $\left\lfloor\frac{\flex(G)+1}{2}\right\rfloor$ & $\longest{G}$
        \\ [1ex]
        \hline
                                         & & & & & \\ [-2.5ex]
        $2\cdot\{\cdot\mid0,\overline{1},\overline{3}\}$ & \textbf{2} &
        \textbf{2} & 5 & 3 & \textbf{2} \\ [0.5ex]
        $\{\cdot\mid0,\overline{3}\}+\{\cdot\mid\overline{1},\overline{2},\overline{3}\}$
                   & 3 & \textbf{2} & 6 & 3 & \textbf{2} \\ [0.5ex]
        $2\cdot\{\cdot\mid\overline{3},\overline{4}\}$ & 8 & 5 & \textbf{2} & 3
                                                       & \textbf{2} \\ [0.5ex]
        $\{\cdot\mid\overline{1},\overline{2},\overline{3}\}$ & 2 & 2 & 2 &
        \textbf{1} & \textbf{1} \\ [0.5ex]
        $2\cdot\{\cdot\mid\overline{1},\overline{2},\overline{3}\}$ & 4 & 3 & 4
                                                                    & 3 &
                                                                    \textbf{2}
    \end{tabular}
\end{center}
All of the above values can be calculated or observed swiftly by hand (or with
computational assistance).

At this point, we've seen some atoms, we've seen a prime element, and we've
seen how to bound the length of factorisations. The question on the tip of the
tongue then is: when is a factorisation unique? This is what we explore in the
next section. Along the way, we will prove some results that yield yet more
atoms for us to enjoy.

\section{Uniqueness of factorisations}
\label{sec:uniqueness}

In this section we prove a number of results related to the uniqueness of
factorisations. We jump in at the deep end with what may appear to be a
complicated proposition, but we will soon see that it is quite simple. We saw
earlier in \cref{lem:bound} that $\longest{G}\leq\longest{G'}+1$ for every good
option $G'$ of $G$; the following result tells us what we can infer when we
have equality.

\begin{proposition}
    \label{prop:good-molecule-options}
    If $G$ is a Left dead end with a good option $H$ such that
    $\longest{G}=\longest{G}+1$, then
    \begin{enumerate}
        \item
            $H$ is a racing option of $G$;
        \item
            there exists a unique atom $K$ such that $G=H+K$, and $K$ is
            terminable;
        \item
            every longest factorisation of $G$ is of the form
            $H_1+\dots+H_k+K$, where $H_1+\dots+H_k$ is a longest factorisation
            of $H$; and
        \item
            if $H$ admits a unique longest factorisation, then so does $G$.
    \end{enumerate}
\end{proposition}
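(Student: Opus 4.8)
The plan is to first repair and interpret the hypothesis: the displayed equality must read $\longest{G}=\longest{H}+1$, i.e.\ we are in the equality case of \cref{lem:bound} for the good option $H$. The whole proposition hinges on a single structural observation about longest factorisations of $G$, so I would establish that observation first and then read off all four items from it.

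\emph{Structural step.} Fix any longest factorisation $G=G_1+\dots+G_k$, so $k=\longest{G}=\longest{H}+1$ and each $G_i$ is an atom. Since $G=\sum_i G_i$ as games and $H$ is a good option of $G$, \cref{cor:good-option-replies} realises $H$ as (equal to) an option of the sum; because these are Left dead ends, every option of a disjunctive sum is obtained by Right moving in a single component, so after relabelling $H=G_1'+G_2+\dots+G_k$ for some option $G_1'$ of $G_1$. I then case on $G_1'$: if $G_1'\neq0$ it is a non-zero Left dead end and factors into at least one atom by \cref{lem:atomic}, and splicing this factorisation into the above expression exhibits a factorisation of $H$ of length at least $1+(k-1)=k>\longest{H}$---impossible. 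Hence $G_1'=0$, so $G_1$ is terminable and thus an atom by \cref{cor:race-1-atom}, while $H=G_2+\dots+G_k$ is a factorisation of $H$ of length $k-1=\longest{H}$ (a longest one) and $G=G_1+H$.

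\emph{Reading off the parts.} Existence in (2) is now immediate with $K\coloneq G_1$, a terminable atom satisfying $G=H+K$. For (1), terminability gives $\race(K)=1$, so \cref{prop:terminal-length-sums} yields $\race(G)=\race(H)+\race(K)=\race(H)+1$; since $H$ is an option of $G$, it is a racing option. Uniqueness in (2) follows from cancellativity (\cref{thm:pocancellative}): any $K'$ with $G=H+K'$ must equal $K$. For (3), I would apply the structural step to an \emph{arbitrary} longest factorisation $G=F_1+\dots+F_k$, obtaining (after relabelling) that $F_1$ is terminable, $G=F_1+H$, and $F_2+\dots+F_k$ is a longest factorisation of $H$; cancelling $H$ against $G=K+H$ forces $F_1=K$, so every longest factorisation of $G$ has the claimed shape. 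Part (4) is then automatic: if $H$ has a unique longest factorisation $H_1+\dots+H_k$, then by (3) every longest factorisation of $G$ equals $H_1+\dots+H_k+K$, so $G$'s longest factorisation is unique up to rearrangement.

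The main obstacle is the structural step itself, and within it the clean use of \cref{cor:good-option-replies} to pin the good option $H$ down to a move inside a single factor of a longest factorisation, together with the maximality argument forcing the moved-to suboption to be $0$ (any non-zero suboption would over-lengthen a factorisation of $H$). Everything else is bookkeeping, provided I keep in mind that for Left dead ends the race, birthday, and factorisation length are invariants of the equality class---so ``$H$ is a racing option'' may be read up to equality---which lets me pass freely between forms and their values throughout.
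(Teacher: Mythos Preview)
Your proposal is correct and follows essentially the same approach as the paper: take an arbitrary longest factorisation, use \cref{cor:good-option-replies} to realise the good option $H$ as a move in one component, force that suboption to be $0$ by maximality of $\longest{H}$, and then read off all four items via cancellativity. You correctly spotted and repaired the typo in the hypothesis (it should read $\longest{G}=\longest{H}+1$), and your justification of $G_1'=0$ via \cref{lem:atomic} is slightly more explicit than the paper's terse ``it must follow that $G_1'=0$''.
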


\begin{proof}
    Let $G_1+\dots+G_n$ be an arbitrary longest factorisation of $G$; i.e.\
    such that $n=\longest{G}=\longest{H}+1$. We prove each of the claims in
    order, using the previous ones implicitly in each proof of the next.

    We first prove (1). Since $H$ is a good option of $G$, there must exist,
    without loss of generality, an option $G_1'$ of $G_1$ such that
    \[
        G_1'+G_2+\dots+G_n=H
    \]
    by \cref{cor:good-option-replies}. But $n=\longest{H}+1$, and so it must
    follow that $G_1'=0$, and then $H=G_2+\dots+G_n$. Thus, we obtain
    $G=H+G_1$, where $G_1$ is a terminable atom; so $\race(G)=\race(H)+1$,
    proving (1).

    We now prove (2) and (3). Suppose we write $G=H+K$, where $K$ is an atom.
    Then, by cancellativity (\cref{thm:pocancellative}), we obtain $G_1=K$,
    which simultaneously proves (2) and (3).

    Finally, to prove (4), suppose that $H$ admits a unique longest
    factorisation. This must be $G_2+\dots+G_n$. The result is thus immediate.
\end{proof}

In \cref{prop:good-molecule-options}, if $G$ has exactly one good option $H$,
then $\overline{1}$ is the unique terminable atom $K$ where $G=H+K$; this is
simply \cref{prop:one-good-option}.

The following corollaries can be useful in practice to determine whether games
are atoms, or indeed to construct atoms.

\begin{corollary}
    \label{cor:atom-options}
    If $G$ is a Left dead end with a good option to an atom $H$, then either
    $G$ is an atom, or $H$ is a racing option of $G$ and there exists a unique
    (terminable) atom $K$ such that $G=H+K$; in particular, $G$ is uniquely
    factorisable.
\end{corollary}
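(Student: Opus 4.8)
The plan is to exploit the fact that the hypothesis ``$H$ is an atom'' forces the longest factorisation of $G$ to be short, so that \cref{prop:good-molecule-options} applies directly in the non-trivial case. Since $H$ is a good option of $G$, \cref{lem:bound} gives $\longest{G}\leq\longest{H}+1$; and because $H$ is an atom we have $\longest{H}=1$, so $\longest{G}\leq2$. On the other hand $G$ is non-zero (it has the option $H$), so $\longest{G}\geq1$. This pins us down to the dichotomy $\longest{G}\in\{1,2\}$, which is exactly the two alternatives in the statement.

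First I would dispose of the case $\longest{G}=1$: by the remark that a Left dead end is an atom if and only if $\longest{G}=1$, this says precisely that $G$ is an atom, which is the first horn of the conclusion. In the remaining case $\longest{G}=2=\longest{H}+1$, the key observation is that this is exactly the equality hypothesis of \cref{prop:good-molecule-options} for the good option $H$. Invoking that proposition, conclusions (1) and (2) immediately yield that $H$ is a racing option of $G$ and that there is a unique atom $K$, which is terminable, with $G=H+K$.

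It then remains to promote this to unique factorisability. Here I would use that $H$, being an atom, has $H$ itself as its only factorisation, hence as its \emph{unique} longest factorisation; \cref{prop:good-molecule-options}(4) therefore tells us that $G$ admits a unique longest factorisation. Finally, since $\longest{G}=2$ the element $G$ is a molecule, so it has no factorisation of length $1$ (that would make $G$ an atom) and none of length $0$ (only $0$ has that). Thus every factorisation of $G$ has length exactly $2=\longest{G}$, i.e.\ every factorisation of $G$ is a longest factorisation; uniqueness of the longest factorisation then gives that $G$ is uniquely factorisable.

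The only real subtlety, and the step I expect to do the genuine work, is the passage from ``unique longest factorisation'' to ``unique factorisation'': one must notice that a molecule with $\longest{G}=2$ cannot have any \emph{shorter} factorisation, so that the length-$2$ factorisations are the only ones at all. Everything else is a routine application of \cref{lem:bound} and \cref{prop:good-molecule-options} together with the atom hypothesis on $H$, which is precisely what converts the inequality of \cref{lem:bound} into the equality required to trigger \cref{prop:good-molecule-options}.
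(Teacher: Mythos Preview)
Your proposal is correct and follows essentially the same route as the paper: apply \cref{lem:bound} to get $\longest{G}\leq2$, split on whether $\longest{G}=1$ or $2$, and in the latter case invoke \cref{prop:good-molecule-options}. The only difference is that the paper declares the $\longest{G}=2$ case to follow ``immediately'' from \cref{prop:good-molecule-options}, whereas you spell out the short additional argument (via part~(4) and the observation that a molecule with $\longest{G}=2$ has no shorter factorisations) needed to upgrade ``unique longest factorisation'' to ``unique factorisation''; this extra detail is correct and arguably clarifies a step the paper leaves implicit.
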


\begin{proof}
    By \cref{lem:bound}, we know that $\longest{G}\leq2$. If $\longest{G}=1$,
    then $G$ is an atom. Otherwise $\longest{G}=2$, and the result follows
    immediately from \cref{prop:good-molecule-options}.
\end{proof}

\begin{corollary}
    \label{cor:two-atom-options}
    If $G$ is a Left dead end with good options to distinct atoms $H$ and $K$,
    then either $G$ is an atom, or $H+K$ is the unique factorisation of $G$. In
    particular, if either of $H$ or $K$ is not terminable (i.e.\ if
    $\race(G)\neq2$), then $G$ is an atom.
\end{corollary}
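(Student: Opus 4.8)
The plan is to apply \cref{cor:atom-options} to each of the two good options $H$ and $K$ in turn, and then to reconcile the factorisations these produce. First I would invoke \cref{cor:atom-options} with the good option $H$ (an atom): either $G$ is an atom—in which case the first alternative of the conclusion holds and we are finished—or $H$ is a racing option, there is a unique terminable atom $K_1$ with $G = H + K_1$, and $G$ is uniquely factorisable. Assuming $G$ is not an atom, both $H$ and $K_1$ are atoms, so $H + K_1$ is a length-two factorisation, and unique factorisability makes it \emph{the} factorisation of $G$ up to rearrangement.

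Next I would run the identical argument with the good option $K$, obtaining (still under the assumption that $G$ is not an atom) a unique terminable atom $H_1$ with $G = K + H_1$ as the unique factorisation. The two descriptions of the unique factorisation must coincide as multisets, so $\{H, K_1\} = \{K, H_1\}$. This is exactly where the hypothesis that $H$ and $K$ are \emph{distinct} is used: the matching $H = K$ is forbidden, so the only admissible bijection forces $H = H_1$ and $K_1 = K$. In particular $K_1 = K$, whence $G = H + K$, and this is the unique factorisation of $G$, establishing the main dichotomy.

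For the ``in particular'' clause I would harvest the terminability already produced: since $K_1 = K$ is terminable and, symmetrically, $H_1 = H$ is terminable, \emph{whenever} $G$ is a molecule both $H$ and $K$ are terminable. Contrapositively, if either $H$ or $K$ fails to be terminable then $G$ is not a molecule, so $G$ is an atom. To convert this into the stated race condition I would compute, in the molecule case, $\race(G) = \race(H) + \race(K) = 1 + 1 = 2$ by \cref{prop:terminal-length-sums}, using that terminability of a non-zero Left dead end is precisely the condition $\race = 1$ (a terminal run of length $1$ is exactly an option to $0$); hence $\race(G) \neq 2$ likewise forces $G$ to be an atom.

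The one delicate point—the step I would write out most carefully—is the multiset reconciliation, where one must exclude the degenerate possibility $H = K_1$ (equivalently $G = 2 \cdot H$): if that held, the left multiset $\{H, K_1\}$ would be $\{H, H\}$, which cannot equal $\{K, H_1\}$ when $K \neq H$, so this case already contradicts distinctness and confirms $K_1 = K$ in every configuration. Everything else is a routine assembly of \cref{cor:atom-options} and \cref{prop:terminal-length-sums}.
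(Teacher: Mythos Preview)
Your proposal is correct and follows essentially the same route as the paper's proof: apply \cref{cor:atom-options} twice to obtain $G=H+K_1=K+H_1$ with $K_1,H_1$ terminable, then use unique factorisability together with $H\neq K$ to force $K_1=K$ and $H_1=H$, and read off the terminability (hence $\race(G)=2$) for the final clause. Your treatment of the multiset matching and the explicit race computation via \cref{prop:terminal-length-sums} is more detailed than the paper's, but the underlying argument is the same.
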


\begin{proof}
    If $G$ is an atom, then we are done. Otherwise, by applying
    \cref{cor:atom-options} twice, it follows that $G=H+J=K+R$, where $J$ and
    $R$ are terminable atoms. But \cref{cor:atom-options} also tells us that
    these factorisations are unique. Since $H$ and $K$ are distinct by
    hypothesis, it must then follow that $G=H+K$ (i.e.\ that $H=R$ and $J=K$).
    In particular, $H$ and $K$ must both be terminable, otherwise we would have
    reached a contradiction, and would conclude that $G$ is an atom.
\end{proof}

\begin{corollary}
    \label{cor:three-atom-options}
    If $G$ is a Left dead end with at least three good options to pairwise
    distinct atoms, then $G$ is an atom.
\end{corollary}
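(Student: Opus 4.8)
The plan is to argue by contradiction: I would assume that $G$ is \emph{not} an atom and derive a contradiction from the existence of three pairwise distinct atom options. The whole point is that \cref{cor:two-atom-options} already does the heavy lifting—it delivers not just a factorisation but a \emph{unique} one—so the three-option case should reduce to comparing two applications of that corollary and cancelling.

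Concretely, first I would select three good options of $G$ that are pairwise distinct atoms, say $H$, $K$, and $L$. Since I am assuming $G$ is not an atom, applying \cref{cor:two-atom-options} to the pair $H,K$ tells me that $H+K$ is the unique factorisation of $G$ (and, as a byproduct, that both $H$ and $K$ are terminable). Next I would apply the same corollary to the pair $H,L$, again using that $G$ is not an atom, to conclude that $H+L$ is the unique factorisation of $G$.

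Now I would combine these: the two conclusions give $H+K=G=H+L$, and so by cancellativity (\cref{thm:pocancellative}) we obtain $K=L$, contradicting the hypothesis that $H$, $K$, and $L$ are pairwise distinct. Equivalently, one can phrase the contradiction purely in terms of uniqueness—the unique factorisation of $G$ cannot simultaneously be the multiset $\{H,K\}$ and the multiset $\{H,L\}$ when $K\neq L$—which avoids even invoking cancellativity explicitly.

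I do not expect a genuine obstacle here, since \cref{cor:two-atom-options} supplies exactly the uniqueness needed. The only point requiring a little care is that each application of \cref{cor:two-atom-options} is licensed only once we have assumed $G$ is not an atom; so the argument must be set up as a single contradiction hypothesis covering both invocations, rather than branching on atomicity separately for each pair. Beyond that, the result is an immediate consequence of the uniqueness already established for two distinct atom options.
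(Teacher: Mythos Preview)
Your proposal is correct and matches the paper's own proof essentially line for line: assume $G$ is not an atom, apply \cref{cor:two-atom-options} to two overlapping pairs of the three distinct atom options, and use cancellativity (\cref{thm:pocancellative}) to force two of the atoms to coincide, a contradiction. The only cosmetic difference is which two pairs are chosen; your remark that uniqueness alone already yields the contradiction is a nice extra observation.
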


\begin{proof}
    If $G$ is an atom, then we are done. Otherwise, write $G_1$, $G_2$, and
    $G_3$ for three of the atoms that are pairwise distinct good options of
    $G$. By \cref{cor:two-atom-options}, it follows that $G=G_1+G_2=G_2+G_3$.
    By cancellativity (\cref{thm:pocancellative}), we obtain $G_1=G_3$,
    contradicting the hypothesis that $G_1$ and $G_3$ are distinct.
\end{proof}

\cref{cor:atom-options,cor:two-atom-options,cor:three-atom-options} yield an
extraordinary number of atoms (at least at low birthdays), which is discussed
briefly in \cref{app:lattice}.

Fascinatingly, there is also a great deal of similarity between
\cref{cor:atom-options,cor:two-atom-options,cor:three-atom-options} and some of
Siegel's results pertaining to the monoid of impartial games \cite[Proposition
35 and Corollary 36 on pp.~17--18]{siegel:impartial}. See
\cref{app:impartial-similarity} for a further discussion.

\subsection{Good prime-factorisable options}

We are now going to investigate some of the consequences of having good options
that are prime-factorisable. We remind the reader that, since $\overline{1}$ is
the only Left dead end currently known to be prime, there is no currently known
difference in meaning here between the words `prime-factorisable' and
`integer'. Where possible, we will write our results using
`prime-factorisable', which serves as prophylaxis against more primes possibly
being discovered in the future.

\begin{lemma}
    \label{lem:technical}
    If $G\coloneq G_1+\dots+G_n=H_1+\dots+H_m\eqcolon H$ are factorisations of
    a Left dead end $G$, and there is a good option of $G$ playing on the
    component $G_1$ to some prime-factorisable $G_1'$, then there exists an
    index $j$ such that $G_1=H_j$.
\end{lemma}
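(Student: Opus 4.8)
The plan is to exploit the equality of the two factorisations together with the good-option matching of \cref{cor:good-option-replies}, and then to finish with a primality argument. First I would translate the hypothesis into a single workable identity. The good option of $G$ in question is obtained by playing on the first component, so it has the form $G_1'+G_2+\dots+G_n$, where $G_1'$ is a (Right) option of $G_1$. Since $G_1+\dots+G_n=H_1+\dots+H_m$, \cref{cor:good-option-replies} supplies a matching good option of $H_1+\dots+H_m$ equal to it; as every good option of a sum is obtained by playing in a single component, there is an index $j$ and an option $H_j'$ of $H_j$ with
\[
    G_1'+G_2+\dots+G_n=H_1+\dots+H_j'+\dots+H_m.
\]
Comparing this with the original equation $G_1+G_2+\dots+G_n=H_1+\dots+H_j+\dots+H_m$ and cancelling the common terms $G_2+\dots+G_n$ and $H_i$ ($i\neq j$) via pocancellativity (\cref{thm:pocancellative}) yields the clean identity $G_1+H_j'=G_1'+H_j$.

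With this identity in hand, the remainder is a divisibility computation. I would write $G_1'=p_1+\dots+p_r$ for a prime factorisation (which exists by the prime-factorisability hypothesis; the case $r=0$, i.e.\ $G_1'=0$, is immediate since then $G_1+H_j'=H_j$ directly gives $G_1\mid H_j$). The crucial claim is that no $p_i$ divides the atom $G_1$. Indeed, if $p_i\mid G_1$, then, since $\mathcal{L}$ is reduced and $G_1$ is an atom, we would have $G_1=p_i$; but $p_i\mid G_1'$ would then give $G_1\mid G_1'$, forcing $\birth(G_1)\leq\birth(G_1')$ by \cref{prop:terminal-length-sums}, which is impossible because $G_1'$ is an option of $G_1$ and hence $\birth(G_1')<\birth(G_1)$.

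Given that no $p_i$ divides $G_1$, I would peel the primes off the $H_j'$ side one at a time: from $p_1\mid G_1+H_j'=G_1'+H_j$ and $p_1\nmid G_1$, primality forces $p_1\mid H_j'$; cancelling $p_1$ and repeating for $p_2,\dots,p_r$ shows $G_1'\mid H_j'$. Writing $H_j'=G_1'+L$ and cancelling $G_1'$ from the identity leaves $G_1+L=H_j$, so $G_1\mid H_j$; as $H_j$ is an atom and $G_1$ is not a unit, $L=0$ and $G_1=H_j$, which is the desired index.

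I expect the main obstacle to be twofold. The first delicate point is producing the identity $G_1+H_j'=G_1'+H_j$ cleanly — in particular, verifying that the reply guaranteed by \cref{cor:good-option-replies} genuinely acts on a single component $H_j$, so that cancellation isolates exactly $G_1,G_1',H_j,H_j'$. The second, and conceptually the heart of the argument, is the birthday step ruling out $p_i\mid G_1$: this is precisely what converts the combinatorial fact that $G_1'$ is a (strictly younger) option of $G_1$ into the algebraic coprimality between $G_1$ and $G_1'$ that the prime-peeling step requires.
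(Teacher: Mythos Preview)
Your argument is correct, and it differs from the paper's in an interesting way. The paper begins with a reduction: if $G_1$ is prime the conclusion is immediate, and otherwise one may cancel any prime atoms common to both factorisations and assume that no $G_i$ or $H_j$ is prime. Working then with the full equation $G_1'+G_2+\dots+G_n=H_1'+H_2+\dots+H_m$, the paper notes that each prime factor of $G_1'$ must divide one of $H_1',H_2,\dots,H_m$; since $H_2,\dots,H_m$ are atoms that are \emph{not} prime, the primes are all forced into $H_1'$, giving $H_1'=G_1'+R$ and hence $H_1=G_1+R$.

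You skip that reduction entirely. By adding $H_j'$ and $H_j$ to the two displayed equations and cancelling, you extract the compact identity $G_1+H_j'=G_1'+H_j$, and then replace the paper's ``the other $H_i$ are non-prime atoms'' observation with a birthday argument: no prime factor $p$ of $G_1'$ can divide the atom $G_1$, since that would force $G_1=p\mid G_1'$ and hence $\birth(G_1)\leq\birth(G_1')$, contradicting that $G_1'$ is an option of $G_1$. This funnels the primes into $H_j'$ just as effectively. Your route trades the paper's preliminary cancellation step for a single combinatorial fact about birthdays, and the two-variable identity $G_1+H_j'=G_1'+H_j$ makes the endgame especially clean; the paper's route, on the other hand, never needs to invoke birthdays at all.
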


\begin{proof}
    If $G_1$ is prime, then the result is immediate. Otherwise, by
    cancellativity (\cref{thm:pocancellative}), we may assume that no $G_i$ nor
    $H_j$ is prime. Without loss of generality, there must exist, by
    \cref{cor:good-option-replies}, a good option of $H$ playing on the
    component $H_1$, say to $H_1'$, such that
    \begin{equation}
        \label{eq:temp2}
        G_1'+G_2+\dots+G_n=H_1'+H_2+\dots+H_m.
    \end{equation}
    Since $G_1'$ is prime-factorisable, each atom appearing in its prime
    factorisation divides one of $H_1',H_2,\dots,H_m$. By assumption,
    $H_2,\dots,H_m$ are atoms but not prime, and so it must follow that
    $H_1'=G_1'+R$, where $R$ is some Left dead end. Thus, by cancellativity
    (\cref{thm:pocancellative}), \cref{eq:temp2} becomes
    \[
        G_2+\dots+G_n=R+H_2+\dots+H_m.
    \]
    We then add $G_1$ to both sides and use the fact that $G=H$ in order to
    obtain
    \[
        H_1+\dots+H_m=G_1+R+H_2+\dots+H_m,
    \]
    from which it is clear that $H_1=G_1+R$ by cancellativity
    (\cref{thm:pocancellative}). We know that $H_1$ is an atom, and so
    necessarily $R=0$ (since $G_1\neq0$). Thus, $H_1=G_1$, and we have the
    result.
\end{proof}

We now give one of our main results, which gifts us infinitely many uniquely
factorisable games that we haven't yet seen, and is also practical when
analysing games by hand. We will explore two special cases of the result very
soon afterwards (\cref{thm:racing-moves,thm:stalling-moves}).

\begin{theorem}
    \label{thm:option-has-prime-fact}
    If $G\coloneq G_1+\dots+G_k$ is a factorisation of a Left dead end such
    that, for every $1\leq i\leq k$, there exists a good option of
    $G_i+\dots+G_k$ on the component $G_i$ to some prime-factorisable $G_i'$,
    then $G$ is uniquely factorisable.
\end{theorem}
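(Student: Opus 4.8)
The plan is to prove \cref{thm:option-has-prime-fact} by induction on $k$, using \cref{lem:technical} as the key engine. The statement gives us a factorisation $G=G_1+\dots+G_k$ where each partial sum $G_i+\dots+G_k$ has a good option on its $G_i$ component to a prime-factorisable position. The natural strategy is to peel off $G_1$: show that $G_1$ must appear in \emph{every} factorisation of $G$, then cancel it and apply the inductive hypothesis to $G_2+\dots+G_k$, which inherits exactly the required hypotheses (since the condition for indices $2\leq i\leq k$ is stated intrinsically in terms of the tails $G_i+\dots+G_k$).

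\begin{proof}
We proceed by induction on $k$. If $k=0$, then $G=0$, which is trivially (uniquely) factorisable by the empty sum; and if $k=1$, then $G=G_1$ is an atom, so it is uniquely factorisable. Assume now that $k>1$ and that the result holds for all smaller values.

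Let $H_1+\dots+H_m$ be an arbitrary factorisation of $G$. By hypothesis, there is a good option of $G=G_1+\dots+G_k$ playing on the component $G_1$ to some prime-factorisable $G_1'$. Applying \cref{lem:technical} to the two factorisations $G_1+\dots+G_k$ and $H_1+\dots+H_m$, we conclude that there exists an index $j$ with $G_1=H_j$. Reordering so that $j=1$, cancellativity (\cref{thm:pocancellative}) gives
\[
    G_2+\dots+G_k=H_2+\dots+H_m.
\]
Write $\widetilde{G}\coloneq G_2+\dots+G_k$. For each $2\leq i\leq k$, the hypothesis furnishes a good option of $G_i+\dots+G_k$ on the component $G_i$ to a prime-factorisable position; since these tails are exactly the partial sums of the factorisation $\widetilde{G}=G_2+\dots+G_k$, the game $\widetilde{G}$ satisfies the hypotheses of the theorem with $k-1$ factors. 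By the inductive hypothesis, $\widetilde{G}$ is uniquely factorisable, so $H_2+\dots+H_m$ coincides (up to rearrangement) with $G_2+\dots+G_k$. Together with $H_1=G_1$, this shows that $H_1+\dots+H_m$ is a rearrangement of $G_1+\dots+G_k$, so $G$ is uniquely factorisable.
\end{proof}

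The main obstacle is making sure that \cref{lem:technical} applies cleanly: that lemma requires a good option of $G$ playing on $G_1$ to a prime-factorisable position, which is precisely the $i=1$ instance of our hypothesis, so it slots in directly. The only subtlety worth a moment's care is confirming that the hypothesis genuinely descends to the tail $\widetilde{G}$ — this works because the condition is phrased so that the $i$-th requirement concerns the tail sum $G_i+\dots+G_k$ rather than the whole of $G$, so no re-derivation is needed and the induction closes without friction.
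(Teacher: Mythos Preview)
Your proof is correct and follows essentially the same route as the paper's: induction on $k$, applying \cref{lem:technical} at the $i=1$ instance to force $G_1$ into any factorisation, cancelling, and invoking the inductive hypothesis on the tail $G_2+\dots+G_k$. Your version is slightly more explicit about why the tail inherits the hypotheses and includes the harmless $k=0$ base case, but otherwise the two arguments are the same.
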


\begin{proof}
    We will proceed by induction on the length of such a factorisation
    $G_1+\dots+G_k$. If $k=1$, then the factorisation is necessarily unique.
    Now suppose $k>1$. Write $H_1+\dots+H_n$ for an arbitrary factorisation of
    $G$. We will show that $k=n$ and also that $H_i=G_i$ for every $i$.

    By hypothesis, there exists a good option of $G$ moving on $G_1$ to some
    prime-factorisable $G_1'$. \cref{lem:technical} then yields, without loss
    of generality, that $G_1=H_1$. By cancellativity
    (\cref{thm:pocancellative}), we obtain
    \[
        G_2+\dots+G_k=H_2+\dots+H_n,
    \]
    where $G_2+\dots+G_k$ is a factorisation of length $k-1$ that necessarily
    satisfies the hypothesis of the theorem, and hence it is unique by
    induction. Therefore, without loss of generality, it follows that $k=n$ and
    $G_i=H_i$ for every $i$, which is what we set out to prove.
\end{proof}

Before discussing two practical cases of \cref{thm:option-has-prime-fact}, we
first need a quick lemma about where the \emph{good} options in a sum can be.

\begin{lemma}
    \label{lem:good-options-local}
    If $G+H$ is a sum of Left dead ends, and $G'+H$ is
    a good option of $G+H$, then $G'$ is a good option of $G$.
\end{lemma}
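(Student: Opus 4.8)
The plan is to prove the contrapositive: assuming $G'$ is \emph{not} a good option of $G$, I would show that $G'+H$ is not a good option of $G+H$, which is logically equivalent to the stated implication. Unwinding \cref{def:good-option} for the Left dead end $G$ (whose options are all Right options), the hypothesis that $G'$ is not good means precisely that there exists an option $G''$ of $G$ with $G'>G''$. The goal then reduces to exhibiting a single option of $G+H$ that strictly dominates $G'+H$.

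The natural candidate is $G''+H$, which is certainly an option of $G+H$ since $G''$ is an option of $G$. Compatibility of the partial order with the monoid operation (part of the pomonoid structure) immediately upgrades $G'\geq G''$ to $G'+H\geq G''+H$. The one point that needs care—and the only real obstacle—is promoting this to a \emph{strict} inequality: were we to have $G'+H=G''+H$, then cancellativity (\cref{thm:pocancellative}) would force $G'=G''$, contradicting $G'>G''$. Hence $G'+H>G''+H$, so $G'+H$ is strictly dominated by another option of $G+H$ and is therefore not good, establishing the contrapositive.

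There is essentially nothing deeper to this argument beyond that strictness upgrade; the whole proof rests on the compatibility axiom of the pomonoid together with the cancellativity of $\mathcal{L}$ from \cref{thm:pocancellative}. I would also flag that only this one direction holds: a good option $G'$ of $G$ need not yield a good option $G'+H$ of $G+H$, since a move in the $H$-component could produce a dominating option $G+H'<G'+H$. Thus the lemma genuinely localises goodness only in the direction stated, which is exactly the form needed for the applications to sums that follow.
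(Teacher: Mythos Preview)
Your argument is correct and is essentially the contrapositive reformulation of the paper's own proof: both rely on exactly the same two ingredients, namely compatibility of the partial order with addition and cancellativity from \cref{thm:pocancellative}, to pass between strict domination in $G$ and strict domination in $G+H$. Your closing remark that the converse fails is also noted in the paper immediately after the lemma.
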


\begin{proof}
    Let $G''$ be an option of $G$ satisfying $G'\geq G''$. Then, $G'+H\geq
    G''+H$. Since $G'+H$ is a good option of $G+H$, it follows that
    $G'+H=G''+H$, and hence $G'=G''$ by cancellativity
    (\cref{thm:pocancellative}).
\end{proof}

The converse of \cref{lem:good-options-local} is not true in general: for
example, 0 is a good option of $W_2$, but $\overline{1}$ is not a good option
of $\overline{1}+W_2$. That is, just because a move is good on a component, it
does not mean that the move is good on the sum; but
\cref{lem:good-options-local} tells us that good moves on sums are indeed good
moves on components.

Recall from \cref{def:racing-option,def:stalling-option} that a racing option
of a Left dead end is an option of minimum race, and a stalling option is an
option of maximum birthday. Given a set of atoms $\mathcal{S}$, we will say
that a sum of elements of $\mathcal{S}$ is an
$\mathcal{S}$\emph{-factorisation}.

\begin{theorem}
    \label{thm:racing-moves}
    If $\mathcal{S}\subseteq\mathcal{L}$ is a set of atoms such that every good
    racing option of every $G\in\mathcal{S}$ is prime-factorisable, then every
    $\mathcal{S}$-factorisation is uniquely factorisable.
\end{theorem}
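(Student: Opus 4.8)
The plan is to reduce everything to an application of \cref{thm:option-has-prime-fact}: given an arbitrary $\mathcal{S}$-factorisation $G=G_1+\dots+G_k$ (each $G_i\in\mathcal{S}$, hence a non-zero atom), I would produce a \emph{reordering} of the summands so that, for every $i$, the truncated sum $G_i+\dots+G_k$ admits a good option played on the component $G_i$ to a prime-factorisable position; that theorem then immediately gives unique factorisability. The engine driving this is the notion of a good racing option, so the first thing I would establish is that every non-zero Left dead end $S$ has one. This follows because the good options of $S$ are exactly those equal to an option of the canonical form $\hat S$, which is a Left dead end with $\race(\hat S)=\race(S)$ by \cref{cor:comparable-race-birth}; since some option of $\hat S$ has race $\race(S)-1$ by the recursive description of race, $S$ does indeed possess a good racing option.

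With that in hand, I would build the ordering greedily. At a typical stage we have a sub-sum $T$ of the remaining summands (a non-zero Left dead end, being a sum of atoms). Choose a good racing option $J$ of $T$. Since any option of a disjunctive sum is obtained by moving in exactly one component, $J$ is played on a single summand $G_m$, to some $G_m'$, and we write $T=G_m+R$ with $R$ the sum of the other remaining summands. Because race is additive (\cref{prop:terminal-length-sums}) and $\race(J)=\race(T)-1$, we get $\race(G_m')=\race(G_m)-1$, so $G_m'$ is a racing option of $G_m$ in the sense of \cref{def:racing-option}. Moreover $J=G_m'+R$ is a good option of $T$, so by \cref{lem:good-options-local} the move $G_m'$ is a good option of $G_m$. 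Hence $G_m'$ is a good racing option of $G_m\in\mathcal{S}$, and by hypothesis it is prime-factorisable. I would then place $G_m$ next in the ordering and recurse on $R$, terminating when the remaining sub-sum is empty.

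This greedy process yields an ordering $G_1,\dots,G_k$ in which, for each $i$, the sub-sum $G_i+\dots+G_k$ was handled at some stage by a good racing option played on the component $G_i$ to a prime-factorisable position. That is precisely the hypothesis of \cref{thm:option-has-prime-fact}, so $G$ is uniquely factorisable; as $G$ was an arbitrary $\mathcal{S}$-factorisation, the theorem follows. The main obstacle—and the reason the argument must go through the sum rather than through individual summands—is that a good racing option on a single component need not remain good once embedded in the sum (a stalling-type reply in another component can strictly dominate it), so one cannot simply lift good racing options of each $G_i$. The fix is exactly the opposite direction: start from a good racing option of the whole sub-sum and use additivity of race together with \cref{lem:good-options-local} to descend it to a good racing option of the component it happens to be played in, which is why the greedy reordering (rather than the original ordering) is essential.
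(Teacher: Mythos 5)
Your argument is correct and follows essentially the same route as the paper: find a good racing option of the (sub-)sum, use the additivity of race (\cref{prop:terminal-length-sums}) together with \cref{lem:good-options-local} to descend it to a good racing option of the component it is played in, invoke the hypothesis to get prime-factorisability, and feed the resulting (reordered) factorisation into \cref{thm:option-has-prime-fact}. The only difference is that you spell out the greedy reordering and recursion over all truncated sums, which the paper's proof leaves implicit after treating the first component.
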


\begin{proof}
    Let $G=G_1+\dots+G_n$ be an $\mathcal{S}$ factorisation. If $G=0$, then the
    conclusion is trivial, so assume $G$ is non-zero. In this case, it follows
    from \cref{thm:subset-terminal-lengths} that there must exist a \emph{good}
    racing option of $G$, say on $G_1$ to $G_1'$ (see
    \cref{lem:good-options-local,prop:terminal-length-sums}). By hypothesis,
    $G_1'$ is prime-factorisable. Thus, \cref{thm:option-has-prime-fact} yields
    the result.
\end{proof}

We can prove a dual result, replacing `racing option' with `stalling option' in
each instance. The proof is identical.

\begin{theorem}
    \label{thm:stalling-moves}
    If $\mathcal{S}\subseteq\mathcal{L}$ is a set of atoms such that every good
    stalling option of every $G\in\mathcal{S}$ is prime-factorisable, then
    every $\mathcal{S}$-factorisation is uniquely factorisable.
\end{theorem}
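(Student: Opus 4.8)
The plan is to follow the proof of \cref{thm:racing-moves} verbatim, trading every instance of the \emph{race} for the \emph{birthday}, and correspondingly using the \emph{maximum} of a set of terminal lengths wherever the racing argument used the minimum. I would start by letting $G=G_1+\dots+G_n$ be an arbitrary $\mathcal{S}$-factorisation. If $G=0$ the claim is trivial (its only factorisation is the empty sum), so I would assume $G$ is non-zero and work towards verifying the hypothesis of \cref{thm:option-has-prime-fact}.

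The first step is to exhibit a \emph{good stalling} option of $G$ that is played on a single component. Its existence comes from \cref{thm:subset-terminal-lengths} exactly as in the racing case: the canonical form of $G$ shares its terminal lengths, so $\birth(G)=\max(\terminal(G))$ is realised by one of its options, and such an option is good. As any option of a sum is a move in a single summand, this good stalling option has the form $G_1'+G_2+\dots+G_n$ after reindexing, where $G_1'$ is an option of $G_1$.

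Next I would confirm that $G_1'$ is a good stalling option of the summand $G_1$, so that the defining property of $\mathcal{S}$ can be invoked. That $G_1'$ is a good option of $G_1$ is immediate from \cref{lem:good-options-local}; that it is stalling follows from the additivity $\birth(G)=\birth(G_1)+\dots+\birth(G_n)$ in \cref{prop:terminal-length-sums}, since decreasing the total birthday by exactly one forces $\birth(G_1')=\birth(G_1)-1$. Because $G_1\in\mathcal{S}$, the hypothesis then tells us that $G_1'$ is prime-factorisable.

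To finish, I would note that every tail $G_i+\dots+G_n$ is again a non-zero $\mathcal{S}$-factorisation, so repeating the argument (reordering greedily) supplies, for each $i$, a good option of $G_i+\dots+G_n$ on its leading summand to a prime-factorisable game---precisely the hypothesis of \cref{thm:option-has-prime-fact}, whence $G$ is uniquely factorisable. I expect the only genuinely delicate point---the content hidden behind the author's remark that ``the proof is identical''---to be checking that the dual use of \cref{thm:subset-terminal-lengths,prop:terminal-length-sums} (maxima and birthdays rather than minima and races) really does localise a good stalling option to a single summand; once that is secured, the appeal to \cref{thm:option-has-prime-fact} is mechanical.
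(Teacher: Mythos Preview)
Your proposal is correct and follows essentially the same approach as the paper: the paper states that the proof of \cref{thm:stalling-moves} is identical to that of \cref{thm:racing-moves}, and your write-up is precisely that dualisation, only made more explicit (in particular, you spell out the appeal to \cref{lem:good-options-local} and \cref{prop:terminal-length-sums} to localise the good stalling option to a single summand, and the greedy reordering needed to feed \cref{thm:option-has-prime-fact}).
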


\cref{thm:racing-moves,thm:stalling-moves} are quite powerful, and they allow
us to say some pretty things. It is important to note that their hypotheses are
not disjoint from one another, but each theorem does currently give some
uniquely factorisable games that the other does not. That is, while
$\overline{1}$ remains the only known prime, the two results are distinct; if
every atom turns out to be prime, then every game would be uniquely
factorisable, in which case both of these results would be irrelevant. We give
some examples now.

As an example of their hypotheses overlapping, we observe that both theorems
yield that every game that can be factorised into 1-flexible atoms is uniquely
factorisable. This includes sums like
$\overline{n}+\{\cdot\mid\overline{1},\overline{4}\}+\{\cdot\mid\overline{7},\overline{8}\}$,
and it is because all of their options (and hence all of their racing and
stalling options) are integers, which must be prime-factorisable since
$\overline{1}$ is prime (\cref{thm:one-prime}).

For \cref{thm:racing-moves} in particular, it is immediate that every sum of
waiting games is uniquely factorisable: so, any game that can be written in the
form $\sum_i^na_i\cdot W_i$ is uniquely factorisable. This is because the only
racing option of a (non-zero) waiting game is 0, which is trivially
prime-factorisable. We cannot currently conclude the same result from
\cref{thm:stalling-moves} since the stalling moves of waiting games (of rank at
least 3) are not known to be prime.

Finally, for \cref{thm:stalling-moves}, we can say that sums like $\{\cdot\mid
W_4,\overline{10}\}+\overline{n}+\{\cdot\mid W_2+W_3, \overline{7}\}$ must be
uniquely factorisable since the (only) stalling option of each atom in that sum
must be an integer, which is prime factorisable since $\overline{1}$ is prime
(\cref{thm:one-prime}). Similarly to before, we cannot currently conclude the
same result from \cref{thm:racing-moves} since the racing move in a game like
$\{\cdot\mid W_4,\overline{10}\}$ is $W_4$, which is not known to be prime.

One will naturally ask whether there exists a way to generalise `stalling
options' and `racing options' in the hypotheses here (of
\cref{thm:racing-moves,thm:stalling-moves}), and indeed there does, but a full
treatment requires the introduction of a significant number of new ideas, and,
crucially, it does not appear to yield anything stronger than what we have
already proved.

\begin{proposition}
    \label{prop:good-prime-fact-option}
    If $G$ is a Left dead end with a good prime-factorisable option $H$, then
    $G$ is uniquely factorisable.
\end{proposition}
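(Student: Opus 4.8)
The plan is to reduce this statement to the machinery we have already built in \cref{thm:option-has-prime-fact}, whose hypothesis asks for a factorisation $G_1+\dots+G_k$ in which \emph{each} partial sum $G_i+\dots+G_k$ has a good option on $G_i$ to a prime-factorisable position. The present proposition gives us only one such option, on the whole of $G$, so the task is to manufacture the remaining hypotheses. First I would dispose of the trivial case $G=0$. For $G$ non-zero and atomic, I would take an arbitrary factorisation $G=G_1+\dots+G_k$ (which exists by \cref{lem:atomic}); if $G$ is itself an atom then $k=1$ and it is uniquely factorisable immediately, so assume $k\geq2$. The good prime-factorisable option $H$ of $G$ must, by \cref{lem:good-options-local}, correspond (after reindexing) to a good option on some single component $G_1$, say to $G_1'$, with $H=G_1'+G_2+\dots+G_k$ prime-factorisable.

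The key step is to leverage \cref{lem:technical} directly, since it is precisely engineered for this situation: given two factorisations of $G$ and a good option on $G_1$ to a prime-factorisable $G_1'$, it concludes that $G_1$ matches one of the atoms in the other factorisation. So I would take any second factorisation $H_1+\dots+H_n$ of $G$, apply \cref{lem:technical} to get $G_1=H_j$ for some $j$, cancel it off using pocancellativity (\cref{thm:pocancellative}), and then I am left comparing two factorisations of $G_2+\dots+G_k$. The obstacle, and the reason this is not a one-line corollary, is that the shortened sum $G_2+\dots+G_k$ need not inherit a good prime-factorisable option, so I cannot simply recurse on the proposition as stated.

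To resolve this I would instead route everything through \cref{thm:option-has-prime-fact} by verifying its hypothesis for the specific ordering $G_1,G_2,\dots,G_k$. The subtlety is that the prime-factorisability of $H=G_1'+G_2+\dots+G_k$ means the entire sum factors into primes, and since $G_2,\dots,G_k$ are atoms appearing as summands, the good option condition cascades: for each $i$, the relevant good option of $G_i+\dots+G_k$ on the component $G_i$ can be taken to the corresponding prime-factorisable residue inherited from $H$. I expect the main work to be bookkeeping here---confirming that good options of $G$ restrict to good options of the appropriate partial sums (again via \cref{lem:good-options-local}) and that the prime-factorisability is preserved down the chain. Once the hypothesis of \cref{thm:option-has-prime-fact} is checked, unique factorisability is immediate, and this is the cleanest route since it avoids re-proving the cancellation argument that \cref{thm:option-has-prime-fact} already encapsulates.
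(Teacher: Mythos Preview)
Your plan to verify the hypothesis of \cref{thm:option-has-prime-fact} has a genuine gap. For $i=1$ you correctly extract a good option of $G$ on $G_1$ to a prime-factorisable $G_1'$. But for $i\geq 2$ there is no ``cascade'': the partial sum $G_i+\dots+G_k$ inherits no option from $H$, and nothing forces $G_i$ to have a prime-factorisable option, let alone one that remains good in the sum $G_i+\dots+G_k$. Even once you know (as is true, and is the real key) that each $G_i$ with $i\geq2$ is \emph{prime}, primality of an atom says nothing about the prime-factorisability of its options. The phrase ``corresponding prime-factorisable residue inherited from $H$'' does not refer to anything concrete for $i\geq2$, and the ``bookkeeping'' you anticipate cannot be carried out.

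Ironically, the approach you abandoned does work, and the obstacle you flagged is illusory. From $H=G_1'+G_2+\dots+G_k$ being prime-factorisable and each $G_i$ ($i\geq2$) being an atom dividing it, you deduce that each such $G_i$ is prime. So after matching $G_1$ with some $H_j$ via \cref{lem:technical} and cancelling, the residual sum $G_2+\dots+G_k$ is itself prime-factorisable, hence uniquely factorisable, and you are done---no need to recurse on the proposition's hypothesis at all. The paper's proof is this same observation made one step more directly: it bypasses both \cref{lem:technical} and \cref{thm:option-has-prime-fact}, simply noting that the primes $G_2,\dots,G_k$ must occur (with multiplicity) in \emph{every} factorisation of $G$, whence any factorisation has the form $J+G_2+\dots+G_k$ with $J$ an atom, and cancellativity forces $J=G_1$.
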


\begin{proof}
    If $G$ is an atom, or $H=0$ (which would imply $G$ is an atom by
    \cref{cor:race-1-atom}), then we are done. Otherwise, let $G_1+\dots+G_k$
    be a longest factorisation of $G$, where $k>1$. Since $H$ is a good option
    of $G$, there must exist, without loss of generality, an option $G_1'$ of
    $G_1$ such that
    \[
        G_1'+G_2+\dots+G_k=H
    \]
    by \cref{cor:good-option-replies}. By hypothesis, $H$ is
    prime-factorisable, and hence $G_i$ must be prime for every $i\geq2$. Since
    each of these primes divides $G$, it follows that every factorisation of
    $G$ has length $k$ and is of the form $J+G_2+\dots+G_k$, where $J$ is an
    atom. But, by cancellativity (\cref{thm:pocancellative}), we obtain
    $G_1=J$. Thus, $G$ admits a unique factorisation.
\end{proof}

Interestingly, if we swap the word `prime-factorisable' for `integer' in the
hypothesis of \cref{prop:good-prime-fact-option}, then we obtain a stronger
conclusion---that either $G$ is itself an integer, or else $G$ is an atom---by
exploiting the powerful properties of $\overline{1}$ that are not present in a
generic prime element of a monoid. Of course, since $\overline{1}$ is the only
game known to be prime, it is unclear whether `prime-factorisable' is any more
general a term here than `integer'.

\begin{proposition}
    \label{prop:integer-option-atom}
    If $G$ is a Left dead end with a good option $\overline{n}$, then either
    $G=\overline{n+1}$, or else $G$ is an atom.
\end{proposition}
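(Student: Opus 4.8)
The plan is to prove the dichotomy directly: assuming $G$ is not an atom, I will show $G=\overline{n+1}$. First note $G\neq0$, since $G$ has an option. As $\overline{n}$ is an integer, it is prime-factorisable ($\overline{n}=n\cdot\overline{1}$), so \cref{prop:good-prime-fact-option} already tells us $G$ is uniquely factorisable; but the argument will not even need uniqueness of the factorisation of $G$, only uniqueness of the factorisation of \emph{integers} (which comes from $\overline{1}$ being prime, \cref{thm:one-prime}). Since $G$ is a molecule, it admits a factorisation $G=G_1+\dots+G_k$ with $k\geq2$.

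Next I would locate the good option $\overline{n}$ within this sum. Viewing $G$ and the form $G_1+\dots+G_k$ as equal values and applying the equality clause of \cref{cor:good-option-replies}, the good option $\overline{n}$ must equal a good option of the sum; every option of a disjunctive sum is a move in a single component, so, after relabelling, $\overline{n}=G_1'+G_2+\dots+G_k$ for some option $G_1'$ of $G_1$.

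The crux is then a terminal-length computation. Since $\overline{n}$ is an integer, $|\terminal(\overline{n})|=1$ by \cref{lem:integer}. Splitting off $G_1'$ and applying \cref{cor:terminal-length-sum-size} (together with \cref{prop:terminal-length-sums}) forces $|\terminal(G_2+\dots+G_k)|=1$, so $G_2+\dots+G_k$ is itself an integer. But this exhibits a factorisation of an integer into atoms, and integers factor uniquely as repeated $\overline{1}$ by \cref{thm:one-prime}; equivalently, the only $0$-flexible atom is $\overline{1}$. Hence $G_2=\dots=G_k=\overline{1}$.

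Finally I would exploit the special behaviour of $\overline{1}$. Because $k\geq2$, at least one factor equals $\overline{1}$, so $\overline{1}\mid G$; writing $G=\overline{1}+G^{*}$, \cref{prop:one-good-option} tells us $G$ has exactly one good option, namely $G^{*}$. Since $\overline{n}$ is a good option, uniqueness gives $\overline{n}=G^{*}$, whence $G=\overline{1}+\overline{n}=\{\cdot\mid\overline{n}\}=\overline{n+1}$ via \cref{prop:one-option}. I expect the main obstacle to be the terminal-length step that pins every higher component to an integer; once a single $\overline{1}$ has been extracted, the collapse to $\overline{n+1}$ through \cref{prop:one-good-option} is the decisive use of the properties of $\overline{1}$ foreshadowed just before the statement. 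The degenerate case $n=0$, where $\overline{n}=0$ is an option and hence $\race(G)=1$ makes $G$ an atom by \cref{cor:race-1-atom}, simply lands in the atom branch and is consistent with the dichotomy.
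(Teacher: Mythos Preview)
Your argument is correct and follows the same opening as the paper: assume $G$ is not an atom, write a factorisation $G=G_1+\dots+G_k$ with $k\geq2$, and use the equality clause of \cref{cor:good-option-replies} to realise the good option $\overline{n}$ as $G_1'+G_2+\dots+G_k$. From here the two proofs diverge slightly. The paper invokes the primality of $\overline{1}$ directly to force $G_i=\overline{1}$ for $i\geq2$, then carries out a second pass (rewriting $G$ via \cref{prop:one-option}, reapplying \cref{cor:good-option-replies}, and cancelling) to pin down $G_1=\overline{1}$ as well. You instead use the terminal-length machinery (\cref{lem:integer}, \cref{cor:terminal-length-sum-size}) to see that $G_2+\dots+G_k$ is an integer, and then appeal to unique factorisation of integers to obtain the same $G_i=\overline{1}$; your finish is cleaner, since once a single $\overline{1}$ has been extracted, \cref{prop:one-good-option} forces $G$ to have a \emph{unique} good option, which must therefore be $\overline{n}$, giving $G=\overline{1}+\overline{n}=\overline{n+1}$ without ever needing to identify $G_1$. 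Both routes lean on the same special behaviour of $\overline{1}$, but yours short-circuits the paper's second application of \cref{cor:good-option-replies}.
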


\begin{proof}
    If $G$ has an option to 0, then $G$ is an atom by \cref{cor:race-1-atom}.
    So, suppose now that $G$ has a good option to $\overline{n}$ with $n>0$.
    Suppose further that $G$ is not an atom; we will show that
    $G=\overline{n+1}$.

    Since $G$ is not an atom, we may write a factorisation $G_1+\dots+G_k$ of
    $G$ for some $k>1$. Since $\overline{n}$ is a good option of $G$, there
    must exist, without loss of generality, an option $G_1'$ of $G_1$ such that
    \[
        G_1'+G_2+\dots+G_k=\overline{n}
    \]
    by \cref{cor:good-option-replies}. Since $\overline{n}$ is
    prime-factorisable (due to \cref{thm:one-prime}), it follows that
    $G_i=\overline{1}$ for every $i\geq2$, and also that
    $G'_1=\overline{n+1-k}$. Our factorisation of $G$ can thus be rewritten as
    $G_1+\overline{k-1}$. But $k>1$, so $G=\{\cdot\mid G_1+\overline{k-2}\}$ by
    \cref{prop:one-option}. Since $G'_1+G_2+\dots+G_k$ is a good option of $G$,
    it now follows from \cref{cor:good-option-replies} that
    \[
        G_1+\overline{k-2}=G'_1+G_2+\dots+G_k=\overline{n}.
    \]
    By cancellativity (\cref{thm:pocancellative}), we obtain
    $G_1=\overline{n+2-k}$. But $G_1$ is an atom, so $G_1=\overline{1}$ and
    $k=n+1$, thus yielding the result.
\end{proof}

\begin{question}
    Can an analogous result to \cref{prop:integer-option-atom} be proved using
    only the general properties of primes in a monoid? Say, if $G$ is a Left
    dead end with a good prime-factorisable option, then must $G$ be either
    prime-factorisable or an atom?
\end{question}

\subsection{Strong atoms}

In the final stretch of this section, we discover swaths of strong atoms. But
first, a simple lemma.

\begin{lemma}
    \label{lem:good-options-n-g}
    If $G$ is a Left dead end, then, for every $n\in\mathbb{N}_1$, $G'$ is a
    good option of $G$ if and only if $G'+(n-1)\cdot G$ is a good option of
    $n\cdot G$.
\end{lemma}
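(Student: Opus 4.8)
The plan is to exploit the fact that the options of $n\cdot G$ are precisely the positions $G''+(n-1)\cdot G$ as $G''$ ranges over the options of $G$ (since a move in $n\cdot G$ is a move in one of the $n$ identical copies of $G$). Recalling \cref{def:good-option}, an option is good exactly when it is not strictly dominated by another option; so the whole statement reduces to transporting strict domination back and forth across the fixed summand $(n-1)\cdot G$. The case $n=1$ is immediate, since then $(n-1)\cdot G=0$, so I would focus on the general mechanism, which works uniformly for all $n\geq1$. The single tool driving both directions is pocancellativity (\cref{thm:pocancellative}).

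For the forward direction I would argue by contrapositive: if $G'+(n-1)\cdot G$ is \emph{not} a good option of $n\cdot G$, then it is strictly dominated by some other option, which must be of the form $G''+(n-1)\cdot G$ with $G''$ an option of $G$; that is, $G''+(n-1)\cdot G>G'+(n-1)\cdot G$. Applying pocancellativity directly cancels the common summand to give $G''>G'$, so $G'$ is strictly dominated in $G$ and hence not good.

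For the reverse direction I would again argue by contrapositive: if $G'$ is not good, there is an option $G''$ of $G$ with $G''>G'$. Compatibility of the partial order with addition yields $G''+(n-1)\cdot G\geq G'+(n-1)\cdot G$. The subtle point—and the one place I expect the argument to require care—is upgrading this to a \emph{strict} inequality, since order-compatibility alone only preserves $\geq$. Here pocancellativity does the work in its contrapositive form: since $G''\neq G'$, cancellativity forbids $G''+(n-1)\cdot G=G'+(n-1)\cdot G$, so the inequality must be strict, exhibiting $G''+(n-1)\cdot G$ as an option of $n\cdot G$ strictly dominating $G'+(n-1)\cdot G$. Thus $G'+(n-1)\cdot G$ is not good, completing the proof. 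The main obstacle is therefore not conceptual but the easily-overlooked gap between $\geq$ and $>$ under addition, which is exactly where pocancellativity (rather than mere order-compatibility) is essential.
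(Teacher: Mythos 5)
Your proof is correct and follows essentially the same route as the paper, which likewise derives the lemma in one line from pocancellativity via the equivalence $G'\geq G''$ if and only if $G'+(n-1)\cdot G\geq G''+(n-1)\cdot G$; you have merely unfolded the strict-versus-non-strict bookkeeping that the paper leaves implicit. (Note only that for Right options of a Left dead end the paper's convention is that $G'$ is dominated when some option $G''$ satisfies $G'>G''$, the reverse of the inequality you wrote, but the argument is symmetric under reversing the order so nothing changes.)
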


\begin{proof}
    By pocancellativity (\cref{thm:pocancellative}), for every pair of options
    $G'$ and $G''$ of $G$, it follows that $G'\geq G''$ if and only if
    $G'+(n-1)\cdot G\geq G''+(n-1)\cdot G$, yielding the result immediately.
\end{proof}

So, \cref{lem:good-options-n-g} says what we would expect: the good options on
a sum of copies of $G$ correspond precisely to the good options of $G$.

\begin{lemma}
    \label{lem:unique+prime}
    If $G$ and $H$ are Left dead ends, where $G$ is uniquely factorisable, and
    $H$ is prime-factorisable, then $G+H$ is uniquely factorisable.
\end{lemma}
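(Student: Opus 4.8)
The plan is to induct on the number $m$ of prime atoms appearing in a fixed prime factorisation $H=p_1+\dots+p_m$ of $H$, keeping $G$ (uniquely factorisable) fixed throughout. The base case $m=0$ gives $H=0$, so that $G+H=G$ is uniquely factorisable by hypothesis. For the inductive step I would write $H=p_1+H'$, where $H'=p_2+\dots+p_m$ is itself prime-factorisable into $m-1$ primes, and then take an arbitrary factorisation $A_1+\dots+A_n$ of $G+H$ (such factorisations exist since $\mathcal{L}$ is atomic, \cref{lem:atomic}); the goal is to show all such factorisations agree up to rearrangement.

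The crux is to factor $p_1$ out of any given factorisation. Since $p_1$ is prime and $p_1\mid G+H=A_1+\dots+A_n$, the defining divisibility property of a prime forces $p_1\mid A_i$ for some index $i$. Here I would insert the key sub-step: because $A_i$ is an atom and $\mathcal{L}$ is reduced, $p_1\mid A_i$ in fact forces $A_i=p_1$. Indeed, writing $A_i=p_1+c$, atomicity of $A_i$ makes one of $p_1,c$ invertible; as $p_1$ is prime (hence non-invertible) and $\mathcal{L}$ is reduced, the only possibility is $c=0$, whence $A_i=p_1$. Having located a copy of $p_1$ among the $A_j$, cancellativity (\cref{thm:pocancellative}) lets me delete it from both sides of $G+H=A_1+\dots+A_n$, producing a factorisation of $G+H'$. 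By the inductive hypothesis $G+H'$ is uniquely factorisable, so any two such reduced factorisations coincide up to rearrangement; re-inserting the deleted $p_1$ then shows that any two factorisations of $G+H$ coincide up to rearrangement, completing the induction.

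The main obstacle is the careful bookkeeping of multiplicities: I must verify that deleting one copy of $p_1$ and later re-inserting it sets up a genuine correspondence between factorisations of $G+H$ and of $G+H'$ that respects repeated atoms, so that uniqueness downstairs transfers faithfully upstairs. The prime-divides-atom step is the conceptual heart, and it depends essentially on reducedness — without it, $p_1\mid A_i$ would yield $A_i=p_1$ only up to a unit, and one would be forced to track those units explicitly; in $\mathcal{L}$ the only unit is $0$, which is what makes the argument clean.
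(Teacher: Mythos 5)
Your proof is correct and follows essentially the same route as the paper, which simply asserts in one line that every factorisation of $G+H$ must have the form $J+H$ and then cancels $H$; your induction on the number of primes, together with the observation that a prime dividing an atom must equal it in a reduced monoid, is precisely the justification the paper leaves implicit.
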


\begin{proof}
    Since $H$ is prime-factorisable, we observe that every factorisation of
    $G+H$ must be of the form $J+H$. Necessarily, $G+H=J+H$, and hence by
    cancellativity (\cref{thm:pocancellative}) we have $G=J$. Since $G$ is
    uniquely factorisable, so is $J$, and we have the result.
\end{proof}

Recall that a strong atom is an atom such that each of its multiples is
uniquely factorisable.

\begin{lemma}
    \label{lem:strong-atom}
    If $G$ is a Left dead end that is an atom with a good prime-factorisable
    option, then $G$ is a strong atom.
\end{lemma}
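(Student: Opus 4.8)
The plan is to prove that every multiple $n\cdot G$ is uniquely factorisable, which is exactly the definition of a strong atom. The natural object to analyse is the factorisation $n\cdot G=\underbrace{G+\dots+G}_{n}$, which is a genuine factorisation precisely because $G$ is an atom. I would then apply \cref{thm:option-has-prime-fact} directly to this factorisation, writing $G_i=G$ for each $1\leq i\leq n$.

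To invoke \cref{thm:option-has-prime-fact}, I must check its hypothesis: for every index $i$, the partial sum $G_i+\dots+G_n=(n-i+1)\cdot G$ admits a good option, played on the component $G_i=G$, to some prime-factorisable game. Let $G'$ be the good prime-factorisable option of $G$ guaranteed by hypothesis. By \cref{lem:good-options-n-g}, since $G'$ is a good option of $G$, the position $G'+(n-i)\cdot G$ is a good option of $(n-i+1)\cdot G$; that is, there is a good option of $G_i+\dots+G_n$ played on $G_i$ to $G'$. As $G'$ is prime-factorisable by assumption, the hypothesis of \cref{thm:option-has-prime-fact} holds with $G_i'=G'$ for every $i$, and so $n\cdot G$ is uniquely factorisable. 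Handling $n=0$ separately (where $0$ is trivially uniquely factorisable via the empty sum) then gives that $G$ is a strong atom.

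There is no heavy computation here; the argument is mostly a matter of lining up the hypotheses of tools already in hand. The one point I would flag as the crux is confirming that the good prime-factorisable option of the single atom $G$ persists as a good option of each partial sum $m\cdot G$. This is exactly the content of \cref{lem:good-options-n-g}, which transports good options of $G$ to good options of $m\cdot G$ via pocancellativity, and it is what lets us reuse the same target $G'$ at every stage rather than having to manufacture a fresh prime-factorisable option for each partial sum.
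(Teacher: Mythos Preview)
Your proof is correct and follows essentially the same approach as the paper: both apply \cref{lem:good-options-n-g} to transport the good prime-factorisable option $G'$ of $G$ to each partial sum $m\cdot G$, and then invoke \cref{thm:option-has-prime-fact} on the factorisation $G+\dots+G$. Your write-up is slightly more explicit about the indexing and adds the (harmless) $n=0$ case, but the substance is identical.
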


\begin{proof}
    We need to show that $n\cdot G$ is uniquely factorisable for every
    $n\geq1$. By \cref{lem:good-options-n-g}, it is clear that, for every
    $1\leq i\leq n$, there is a good option of $i\cdot G$ playing on $G$ to
    some prime-factorisable $G'$. \cref{thm:option-has-prime-fact} then yields
    the result.
\end{proof}

\begin{theorem}
    \label{thm:strong}
    If $G$ is a Left dead end with a good prime-factorisable option, then
    $n\cdot G$ is uniquely factorisable for all $n$.
\end{theorem}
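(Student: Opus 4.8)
The plan is to reduce the claim to two facts already in hand: that a single atom possessing a good prime-factorisable option is a \emph{strong} atom (\cref{lem:strong-atom}), and that adjoining a prime-factorisable summand to a uniquely factorisable element preserves unique factorisability (\cref{lem:unique+prime}). First I would apply \cref{prop:good-prime-fact-option} to the hypothesis, concluding that $G$ is itself uniquely factorisable, and write $G=A_1+\dots+A_m$ for its unique factorisation into atoms. Since an empty Left dead end has no options at all, the hypothesis forces $G$ to be non-zero, so $m\geq1$.

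Next I would unpack the good prime-factorisable option, call it $H$. As $H$ is an option of the disjunctive sum $A_1+\dots+A_m$, it is obtained by moving in a single component, say (without loss of generality) $A_1$, so that $H=A_1'+A_2+\dots+A_m$ for some option $A_1'$ of $A_1$; by \cref{lem:good-options-local}, $A_1'$ is then a good option of $A_1$. The crucial observation is that every atom appearing in any factorisation of $H$ must be prime: being prime-factorisable, and living in a reduced, atomic, cancellative, commutative monoid, $H$ is uniquely factorisable with its unique factorisation consisting entirely of primes. Decomposing $A_1'$ into atoms and appending $A_2,\dots,A_m$ exhibits one such factorisation of $H$, so each of $A_2,\dots,A_m$ is prime and $A_1'$ is prime-factorisable. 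In particular, $A_1$ is an atom carrying the good prime-factorisable option $A_1'$, whence \cref{lem:strong-atom} shows $A_1$ is a strong atom; thus $n\cdot A_1$ is uniquely factorisable for every $n$.

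Finally I would assemble the pieces. Writing $n\cdot G=(n\cdot A_1)+(n\cdot A_2+\dots+n\cdot A_m)$, the first summand is uniquely factorisable because $A_1$ is a strong atom, while the second is a sum of primes and so is prime-factorisable; one application of \cref{lem:unique+prime} then yields that $n\cdot G$ is uniquely factorisable. The main obstacle---really the only substantive step---is extracting the algebraic structure of $G$ from a single good prime-factorisable option: namely, that the ``untouched'' components $A_2,\dots,A_m$ are forced to be prime and the ``played-on'' component $A_1$ is a strong atom. Once this is secured, \cref{lem:strong-atom,lem:unique+prime} finish the argument mechanically. The case $m=1$, in which $G$ is itself an atom, is subsumed: there are no untouched components, $A_1'=H$ is the prime-factorisable good option, and the conclusion is precisely \cref{lem:strong-atom}.
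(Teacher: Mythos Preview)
Your argument is correct and follows essentially the same route as the paper: isolate one atom of $G$ (the component in which the good prime-factorisable option moves), show it is a strong atom via \cref{lem:strong-atom}, observe that the remaining atoms are forced to be prime, and then conclude with \cref{lem:unique+prime}. The only cosmetic difference is that you invoke \cref{prop:good-prime-fact-option} at the outset to treat all $n$ uniformly, whereas the paper disposes of the prime-factorisable case and the case $n=1$ separately before running the same core argument.
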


\begin{proof}
    If $G$ is itself prime-factorisable, then the result is immediate, and so
    we assume that $G$ is not prime-factorisable. Additionally, if $n=1$, then
    we have the result by \cref{prop:good-prime-fact-option}, so assume that
    $n>1$.

    By hypothesis, we may write, without loss of generality, that
    $G_1'+G_2+\dots+G_t$ is a good prime-factorisable option of $G$; since each
    $G_i$ is an atom, it then follows that $G_i$ must be prime for $i\geq2$.
    Since $G$ is not prime-factorisable by our earlier assumption, we know that
    $G_1$ cannot be prime (but $G_1'$ is prime-factorisable, of course).

    Observe that, since $G_1+\dots+G_t$ is a factorisation of $G$, it must
    follow that $n\cdot G_1+\dots+n\cdot G_t$ is a factorisation of $n\cdot G$.
    Since $G_1'+G_2+\dots+G_t$ is a good option of $G_1+\dots+G_t$, we know by
    \cref{lem:good-options-local} that $G_1'$ is a good option of $G_1$. Since
    $G_1'$ is necessarily prime-factorisable, it then follows from
    \cref{lem:strong-atom} that $G_1$ must be a strong atom. In particular,
    $n\cdot G_1$ is uniquely factorisable; hence, it follows from
    \cref{lem:unique+prime} that $n\cdot G_1+n\cdot(G_2+\dots+G_t)=n\cdot G$ is
    uniquely factorisable.
\end{proof}

It is important to point out, again, that it is not currently known whether
`prime-factorisable' means anything different to `integer'. And, due to
\cref{prop:integer-option-atom}, if $\overline{1}$ turns out to be the only
prime, then \cref{thm:strong} doesn't say anything more than we already knew
from \cref{lem:strong-atom} and \cref{thm:one-prime}; i.e.\ that an atom with a
good integer option is strong, and that integers are uniquely factorisable.

\section{Final remarks}
\label{sec:final-remarks}

In this paper, we have investigated the structure of Left dead ends, and in
particular through the lense of factorisation theory. But the study was not
conclusive, and there remain many open questions to be resolved. The most
important of which, in this author's opinion, is the question of uniqueness: is
every Left dead end uniquely factorisable? That is, is our monoid a unique
factorisation monoid (sometimes also called a \emph{factorial} monoid)?

\begin{problem}
    \label{prob:unique-factoristaion}
    Is every Left dead end uniquely factorisable?
\end{problem}

While we have seen that the Left dead ends have a rigid structure and are
unreasonably well-behaved when compared with some other mis\`ere structures, it
would be quite remarkable indeed if this monoid would be a unique factorisation
monoid. It would imply that the monoid of Left dead ends is a free commutative
monoid (on a countably infinite number of generators); it would be isomorphic
to the monoid natural numbers (indexed from 1) under multiplication, with the
(Left dead end) atoms mapping to the prime numbers.

So, if one were to try and disprove a claim of uniqueness, where should one
look? If one is trying to find a minimal counter-example, then certainly the
game must have more than one good option; if it had exactly one, then the prime
$\overline{1}$ would be a factor, and hence removing the $\overline{1}$ would
yield a smaller counter-example, contradicting minimality. Similarly, a minimal
counter-example must have at least three atoms that are factors; if it had one,
then it would clearly be uniquely factorisable; and if it had two, then it
would follow (from \cref{prop:grothendiek,thm:pocancellative}) that we could
find two factorisations with a common factor, and hence cancelling this factor
would yield a smaller counter-example, contradicting minimality.

Additionally, by \cref{cor:atom-options}, every Left dead end with a good
option to an atom is uniquely factorisable. Thus, a counter-example must not
have a good option to an atom; every good option must be a molecule. And herein
lies the first computational issue: at low birthdays, there are relatively few
molecules (compared to the number of atoms), and so there simply aren't many
opportunities for a counter-example to present itself. But one cannot go to
high birthdays without contending with the combinatorial explosion.Worse still,
we saw in \cref{sec:uniqueness} that large families of games not covered by the
above discussion are also uniquely factorisable. So, perhaps some more
ingenuity than a brute-force search is required.

In terms of the birthday of a minimal counter-example, computations performed
thus far show that one would need to start at those Left dead ends born on day
8. This was an exhaustive check performed using \texttt{gemau}
\cite{davies:gemau}, the details of which can be found in the ancillary files
attached to this work.

Thus, from the above discussion, if the reader seeks an answer to
\cref{prob:unique-factoristaion}, then a minimal counter-example (if it is to
exist) must be a Left dead end of birthday at least 8 that has at least three
factors that are atoms, and whose good options are all molecules. Instead of
brute-force, perhaps randomly generating games, or developing some other ideas,
would prove fruitful.

Finally, besides understanding the uniqueness of factorisations, it would also
be interesting to further investigate the the number of Left dead ends born by
day $n$. We did not discuss this much in this paper, aside from our comments at
the end of \cref{sec:flexibility} regarding the sets of $n$-flexible games, but
the reader is invited to \cref{app:lattice} for a brief overview.

\begin{problem}
    How many atoms and molecules are born by day $n$? More generally, given a
    finite set $S\subseteq\mathbb{N}_1$, how many atoms and molecules $G$ are
    there with $\terminal(G)=S$?
\end{problem}

Our motivation for this paper came from wanting to better understand the
structure of mis\`ere universes. Instead, we have gotten lost in the wonderful
world of Left dead ends. The present author hopes that much more will soon be
discovered. Mis\`ere theory is full of surprises!

\section*{Acknowledgements}

This work was much improved due to a careful reading by Vishal Yadav, as well
as many generous hours of discussion. The author is indebted to Danny Dyer and
Rebecca Milley for many comments; this work would have been completed far later
if it wasn't for their encouragements. The author also wishes to thank Aaron
Siegel for comments early on regarding some of the striking similarities with
results for the impartial mis\`ere monoid, as well as Alfred Geroldinger for
useful and enthusiastic discussion (and for helping the present author
navigate/understand some of the factorisation literature).

\bibliographystyle{plainurl}
\bibliography{bib}

\appendix

\section{Similarities with the impartial monoid}
\label{app:impartial-similarity}

It was pointed out to the current author by Aaron Siegel (in a private
communication) that there is a striking similarity between some of our results
here for Left dead ends and the results on the monoid of impartial mis\`ere
forms. This similarity later transpired to be even greater than it appeared at
first glance. See \cite[\S6--7 on pp.~15--20]{siegel:impartial} in particular,
which contains most of the commonalities.

The results may not immediately appear as similar as they actually are because
the language of \cite{siegel:impartial} is not congruent with the langugage we
have used here---the language more commonly found in the algebra literature.
For example, Siegel's `prime' is our `atom', and `prime partition' is our
`factorisation'. It would be helpful to standardise this, so that comparisons
can be made more easily. Perhaps some of our results here could also be used to
extend the results for the impartial monoid. Further investigation is highly
warranted.

It is important to note that it is not just the theorem \emph{statements} that
are almost identical, but the \emph{proofs}, too. (We should point out,
however, that these really are distinct monoids; the monoid of Left dead ends
is reduced, while the impartial monoid has a torsion element, for example.) The
question then is why are these theorems and proofs so similar? This can partly
be attributed to how simple the two monoids are, and in particular to how they
are both `one-sided': each impartial game can be identified with its set of
options (there is no need for a distinction between Left and Right options,
since they are always the same); similarly, each Left dead end can be
identified with its Right options (since there are never any Left options to
involve in our arguments).

But perhaps it has more to do with the fact that these are not your typical
monoids; these are monoids of combinatorial games, whose elements are defined
in a strange, recursive manner (by their Left and Right options). Is it
possible that the techniques can work for more games than just impartial games
and Left dead ends? That is, is it possible that a more general factorisation
theory of mis\`ere monoids can be developed? After all, this study of Left dead
ends, along with Siegel's study of the impartial monoid, are so far the only
two efforts at a mis\`ere factorisation theory.

It is notable, however, that there was a recent investigation into a
factorisation theory for normal play games by Fisher, McKay, Milley,
Nowakowski, and Santos \cite{fisher.mckay.ea:indecomposable}. Of course, the
set of values in normal play forms a group, and so the typical factorisation
theory of monoids doesn't apply (since everything is a unit). As such, they had
to \emph{force} the factorisations to work by restricting the birthdays of the
factors, which is natural. Our definition of \emph{molecule} is effectively
their \emph{strongly decomposable}, which the reader can compare
\cite[Definition 2 on p.~3]{fisher.mckay.ea:indecomposable}.

We also remark that there is a natural bijection between the monoid of Left
dead ends and the monoid of impartial forms: given an impartial form, simply
remove every Left option from every subposition; what remains must be a Left
dead end. Conversely, given a Left dead end, one can \emph{mirror} all of the
right options, starting from the bottom of the game tree and working up, to
necessarily obtain an impartial form. This correspondence, however, appears to
be superficial: the two monoids are fundamentally different; the monoid of Left
dead ends is partially ordered, whereas there is no way to have $G>H$ for
impartial forms $G$ and $H$ (they are either equal or incomparable).

We conclude with a short subsection lifting a result from Siegel's work that
the present author had not thought of for the case of Left dead ends, which
yields a reasonably efficient algorithm for finding the factors of a Left dead
end.

\subsection{Factoring algorithm}

All we need to do here is repurpose Siegel's Difference Lemma \cite[Lemma 22 on
p.~14]{siegel:impartial} for Left dead ends. The statement here is arguably
slightly simpler.

\begin{lemma}
    \label{lem:difference}
    If $G$ and $H$ are Left dead ends satifying $H\mid G$, then either:
    \begin{enumerate}
        \item
            there exist good options $G'$ and $H'$ such that $G+H'=G'+H$; or
            else
        \item
            for each good option $G_i'$ (of $G$), there exists a Left dead end
            $X_i$ such that $G_i'=H+X_i$, and $G=H+\{\cdot\mid X_i\}$ (where
            $X_i$ here ranges over all $i$).
    \end{enumerate}
\end{lemma}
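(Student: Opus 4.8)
The plan is to fix a witness to the divisibility and then split on where the good moves of $G$ can be found. Since $H\mid G$, I would begin by choosing a Left dead end $K$ with $G=H+K$, and work throughout with the form $H+K$. Its options are exactly the moves in the $H$-component, of the shape $H^R+K$, and the moves in the $K$-component, of the shape $H+K^R$; so each good option of $G$ (in the sense of \cref{def:good-option}) is of one of these two types. The whole argument rests on the dichotomy: either at least one good option of $G$ is a move in $H$, in which case I aim for conclusion~(1); or every good option of $G$ is a move in $K$, in which case I aim for conclusion~(2). The degenerate case $G\cong0$ is handled first and separately, since then $G=0$ forces $H=0$ as $\mathcal{L}$ is reduced, and conclusion~(2) holds vacuously with $\{\cdot\mid X_i\}=0$.

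For the first horn, suppose $H^R+K$ is a good option of $G=H+K$ for some right option $H^R$ of $H$. Applying \cref{lem:good-options-local} to the sum $H+K$ shows that $H^R$ is then a good option of $H$. I would set $G'\coloneq H^R+K$ and $H'\coloneq H^R$; these are good options of $G$ and $H$ respectively, and a one-line computation with commutativity and associativity gives $G+H'=H+K+H^R=(H^R+K)+H=G'+H$, which is exactly conclusion~(1).

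The substance of the proof is the second horn, where every good option of $G$ is a move in $K$. Here I would write these good options as $H+X_i$, so that each $X_i$ is a (good) option of $K$, and set $L\coloneq\{\cdot\mid X_i\}$; conclusion~(2) then reduces to the single equality $G=H+L$, which I would establish by two applications of \cref{thm:reduced}. For $H+L\geq G$, note that $L$ arises from $K$ by discarding some right options, so $L\geq K$ is immediate (every right option of $L$ is already a right option of $K$), and compatibility of the order with addition upgrades this to $H+L\geq H+K=G$. For the reverse $G\geq H+L$, I would represent $G$ by the form $\{\cdot\mid H+X_i\}$ built from its good options (legitimate because the value of a Left dead end is determined by its good options, cf.\ \cref{cor:good-option-replies} and the canonical-form description following \cref{def:good-option}); each right option $H+X_i$ of this form is literally a right option of $H+L$, obtained by moving in the $L$-component, so the required comparison is trivial. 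Combining the two inequalities gives $G=H+L$, which together with $G_i'=H+X_i$ is precisely conclusion~(2).

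I expect the main obstacle to be bookkeeping rather than any deep idea: one must keep straight that `good option of $G$' is being computed on the form $H+K$ (so that the split into moves in $H$ and moves in $K$ is available), while simultaneously invoking the good-option form $\{\cdot\mid H+X_i\}$ for the reverse comparison in the second case. The two small observations that make everything collapse are that deleting right options can only move a Left dead end up in the order (so $L\geq K$), and that \cref{lem:good-options-local} lets one pull a good move on a sum back to a good move on the relevant component.
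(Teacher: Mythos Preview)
Your proof is correct and follows essentially the same line as the paper's: fix $K$ with $G=H+K$, split on whether some good option of $H+K$ lies in the $H$-component, and in the second case sandwich $G$ between $H+\{\cdot\mid X_i\}$ and itself via $\{\cdot\mid X_i\}\geq K$. The only cosmetic difference is that the paper phrases the argument contrapositively (assume (1) fails, then derive (2)) and invokes \cref{cor:good-option-replies} where you work directly with the form $H+K$ and cite \cref{lem:good-options-local}; the content is the same.
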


\begin{proof}
    If $G=0$, then the result is immediate, so assume otherwise. We may also
    assume that there exist no good options $G'$ and $H'$ such that
    $G+H'=G'+H$. This implies that $G\neq H$.

    Now, since $H\mid G$ and $G\neq H$, there must exist some non-zero Left
    dead end $K$ such that $G=H+K$. By \cref{cor:good-option-replies}, for each
    good option $G_i'$, either there exists some (good) option $H_i'$ with
    $G_i'=H_i'+K$, or else there exists some (good) option $K_i'$ with
    $G_i'=H+K_i'$. If we are ever in the first case, with $G_i'=H_i'+K$, then
    adding $H$ to both sides yields $G_i'+H=H_i'+G$, contradicting our initial
    assumption. Thus, we must always be in the second case where $G_i'=H+K_i'$,
    from which it follows (from \cref{thm:reduced}) that $G\geq H+\{\cdot\mid
    K_i'\}$ (where $K_i'$ ranges over all $i$). Now, clearly $\{\cdot\mid
    K_i'\}\geq K$, and hence
    \begin{align*}
        H+\{\cdot\mid K_i'\}&\geq H+K\\
                            &=G,
    \end{align*}
    from which we obtain $G=H+\{\cdot\mid K_i'\}$, which is the result.
\end{proof}

Then, as Siegel notes for the impartial monoid, we too now have a reasonable
algorithm for finding the factors of an arbitrary Left dead end. Of course, we
could always use a brute-force algorithm to find all factors (by generating all
games born before the game we are factoring, which must be finite, and then
trying every possible combination), but the algorithm yielded for us here is
much more practical. We can follow Siegel's instructions almost exactly, but we
write it out here explicitly. The idea is that we are going to recursively
compute all of the factors of each good option $G'$ of $G$, and then
reconstruct the possible factors of $G$ using \cref{lem:difference}.

Let $G$ be a Left dead end. If $G$ is an atom, then we already have all of the
factors of $G$ (they are 0 and $G$ itself). Otherwise, we may write $G=H+K$ for
some non-zero Left dead ends $H$ and $K$. We need to show that we can reover
$H$ and $K$. Obviously, we have $H\mid G$ and $K\mid G$, and so we may apply
\cref{lem:difference}.

Suppose first that both $H$ and $K$ satisfy the first item in
\cref{lem:difference}. In particular, for $H$, we see that $G+H'=G'+H$. Adding
$K$ to both sides yields $G+H'+K=G'+G$. Then, by cancellativity
(\cref{thm:pocancellative}), we obtain $H'+K=G'$. Thus, we can recover $K$ as a
factor of some option $G'$ of $G$. We can apply an identical argument in order
to recover $H$ as a factor of some (perhaps different) option of $G$.

Now suppose instead, without loss of generality, that $H$ does not satisfy the
first item in \cref{lem:difference}. Then $H$ must satisfy the second item: for
each good option $G_i'$ of $G$, there exists some Left end $X_i$ such that
$G_i'=H+X_i$, and $G=H+\{\cdot\mid X_i\}$. So, we can recover $H$ and each
$X_i$ from the factors of the $G_i'$, and we can also then recover $K$; we have
$H+K=G=H+\{\cdot\mid X_i\}$, and so $K=\{\cdot\mid X_i\}$ by cancellativity
(\cref{thm:pocancellative}).

Thus, a shortcut to factorising a Left dead end is to compute all of the
factors of its good options recursively, and then reconstruct the possible
factors of the Left dead end according to the strategy above.

\section{}
\label{app:formal-race}

When we have a short game, in normal play or in mis\'ere play, and we apply
some reduction to it (removing a dominated option, or bypassing a reversible
one), it is impossible to strictly increase the formal birthday. It thus
follows that the birthday of a game is exactly the formal birthday of the
game's canonical form.

Unfortunately, it is not so simple for the race of a game. Consider, in normal
play, the form $G=\{0,1\mid\cdot\}$. Clearly, the formal race of this game is
1, since Left can terminate the game immediately. But, in normal play,
$G=\{1\mid\cdot\}$, which clearly has formal race 2. Since $\{1\mid\cdot\}$ is
the canonical form of $G$, we have seen that canonical forms may have higher
formal race than other games in the equivalence class. Must it be the highest?
Alas, it need not be, for we also have the equality $G=\{3/2\mid\cdot\}$, which
has formal race 3. Thus, the formal race of a game's canonical form could be
higher or lower than the formal race of the game.

As such, instead of creating a split for race and formal race, we have chosen
to discuss only a single concept, which is functionally equivalent to formal
race. This is the same motivation for how we defined the terminal lengths of a
game, without discussing \emph{formal} terminal lengths. Perhaps creating a
distinction could lead to some merit elsewhere, but that merit would not be
realised in this study of Left dead ends.

\section{}
\label{app:alt-proof}

While the inductive proof of \cref{thm:subset-terminal-lengths} is easy on the
eyes, we provide here an alternative proof that constructs a distinguishing
game. The intuition is simply the observation that a player would sometimes
like the option to pass when playing a sum of combinatorial games. We can
extend this idea and construct a game $C_n$ in which Right would want to pass
exactly $n$ times before having to play again. Thus, if $G$ is a Left dead end
with $n\in\terminal(G)$, and $H$ is a Left dead end with
$n\not\in\terminal(H)$, then Right will win playing first on $G+C_n$, since he
can pass $n$ times, but he cannot do the same on $H+C_n$. 

\begin{namedthm*}{\cref{thm:subset-terminal-lengths}}
    If $G$ and $H$ are Left dead ends with $G\geq H$, then
    $\terminal(G)\subseteq\terminal(H)$.
\end{namedthm*}

\begin{proof}
    Suppose, for a contradiction, that $G\geq H$ and $\terminal(G)\not\subseteq
    \terminal(H)$. There must then exist some
    $n\in\terminal(G)\backslash\terminal(H)$, and so, by \cref{thm:reduced} and
    \cref{lem:0-props}, it follows that both $G$ and $H$ are non-zero. Now
    construct a game (to be our distinguishing game) $C_0\coloneq\{0,*\mid*\}$,
    and then, recursively,
    \[
        C_k\coloneq\{C_{k-1}\mid0\}
    \]
    for $k>0$. It is clear that
    \[
        \outcomeR(G+C_n)=\mathscr{R}<\mathscr{L}=\outcomeR(H+C_n),
    \]
    and so $G\not\geq H$, yielding the desired contradiction.
\end{proof}

Note that our distinguishing game (in the proof above) was in the dicot
universe $\mathcal{D}$. Since the partial order of Left dead ends is
multiversal (i.e.\ given \cref{cor:multiversal}), it must be the case that two
Left dead ends are distinguishable if and only if they are distinguishable
modulo $\mathcal{D}$ (recall that $\mathcal{D}$ is a subuniverse of
\emph{every} universe).

\section{Counting Left dead ends}
\label{app:lattice}

Siegel has already computed the number of Left dead ends born by day $n$ for
each $n\leq5$ \cite[p.~9]{siegel:on}. While we do not attempt to extend this
particular count here (owing to the combinatorial explosion), we do add data
corresponding to the number of atoms and molecules born by day $n$. In addition
to counting the number of molecules born by day $n$, we also count what we call
here the \emph{non-trivial} molecules born by day $n$; these are simply
molecules that are not divisible by $\overline{1}$.

In the table below, note that all of the entries in the first row were first
due to Siegel (although we have also verified the counts).
\begin{center}
    \begin{tabular}{ l | c c c c c c c c }
        \ldots born by day \ldots & 0 & 1 & 2 & 3 & 4 & 5 & 6 & 7 \\
        \hline
        Left dead ends & 1 & 2 & 4 & 10 & 52 & 21278 & ? & ? \\
        atoms & 0 & 1 & 2 & 6 & 41 & 21221 & ? & ? \\
        molecules & 0 & 0 & 1 & 3 & 10 & 56 & 21328 & ? \\
        non-trivial molecules & 0 & 0 & 0 & 0 & 1 & 5 & 51 & 21375 \\
    \end{tabular}
\end{center}

When we mentioned in \cref{sec:uniqueness} that
\cref{cor:atom-options,cor:two-atom-options,cor:three-atom-options} yield an
extraordinary number of atoms (at least at low birthdays), it was precisely
because of this table. At these low birthdays, the number of atoms dwarfs the
number of molecules, and since having a good option to an atom makes it more
likely that the game is itself an atom, it seems reasonable to think that this
pattern will continue; that more atoms will beget more and more atoms.

We include here in \cref{fig:hasse4} the Hasse diagram of the (non-zero) Left
dead ends born by day 4, which was generated using \texttt{poset} and
\texttt{graff} \cite{davies:poset,davies:graff}. The Hasse diagrams of the
(non-zero) Left dead ends born by days 2 and 3 can be found in \cite[Figure 1
on p.~9]{siegel:on}. We also computed the Hasse diagram of the (non-zero) Left
dead ends born by day 5, but the reader will surely agree that the margin is
too narrow to contain 21277 vertices. Instead, the brave reader can look at the
ancillary files accompanying this work. It would be interesting to further
explore the properties of these lattices.

\begin{figure}
    \centering
    \begin{tikzpicture}[>=latex',line join=bevel,every node/.style={scale=0.6}]
        \node (9) at (23.8bp,279.0bp) [draw,circle, fill] {};
        \node (5) at (1.8bp,239.4bp) [draw,circle, fill] {};
        \node (18) at (23.8bp,239.4bp) [draw,circle, fill] {};
        \node (23) at (45.8bp,239.4bp) [draw,circle, fill] {};
        \node (14) at (1.8bp,199.8bp) [draw,circle, fill] {};
        \node (22) at (23.8bp,199.8bp) [draw,circle, fill] {};
        \node (20) at (89.8bp,318.6bp) [draw,circle, fill] {};
        \node (6) at (111.8bp,279.0bp) [draw,circle, fill] {};
        \node (33) at (89.8bp,279.0bp) [draw,circle, fill] {};
        \node (46) at (67.8bp,279.0bp) [draw,circle, fill] {};
        \node (15) at (111.8bp,239.4bp) [draw,circle, fill] {};
        \node (26) at (89.8bp,239.4bp) [draw,circle, fill] {};
        \node (21) at (133.8bp,318.6bp) [draw,circle, fill] {};
        \node (7) at (155.8bp,279.0bp) [draw,circle, fill] {};
        \node (34) at (133.8bp,279.0bp) [draw,circle, fill] {};
        \node (16) at (155.8bp,239.4bp) [draw,circle, fill] {};
        \node (25) at (133.8bp,239.4bp) [draw,circle, fill] {};
        \node (29) at (78.8bp,81.0bp) [draw,circle, fill] {};
        \node (8) at (89.8bp,41.4bp) [draw,circle, fill] {};
        \node (42) at (67.8bp,41.4bp) [draw,circle, fill] {};
        \node (17) at (78.8bp,1.8bp) [draw,circle, fill] {};
        \node (24) at (45.8bp,318.6bp) [draw,circle, fill] {};
        \node (37) at (45.8bp,279.0bp) [draw,circle, fill] {};
        \node (48) at (111.8bp,160.2bp) [draw,circle, fill] {};
        \node (10) at (111.8bp,120.6bp) [draw,circle, fill] {};
        \node (51) at (89.8bp,120.6bp) [draw,circle, fill] {};
        \node (47) at (67.8bp,120.6bp) [draw,circle, fill] {};
        \node (19) at (100.8bp,81.0bp) [draw,circle, fill] {};
        \node (1) at (111.8bp,358.2bp) [draw,circle, fill] {};
        \node (11) at (155.8bp,318.6bp) [draw,circle, fill] {};
        \node (12) at (67.8bp,318.6bp) [draw,circle, fill] {};
        \node (13) at (111.8bp,318.6bp) [draw,circle, fill] {};
        \node (2) at (133.8bp,358.2bp) [draw,circle, fill] {};
        \node (3) at (67.8bp,358.2bp) [draw,circle, fill] {};
        \node (4) at (89.8bp,358.2bp) [draw,circle, fill] {};
        \node (35) at (1.8bp,160.2bp) [draw,circle, fill] {};
        \node (36) at (45.8bp,199.8bp) [draw,circle, fill] {};
        \node (39) at (111.8bp,199.8bp) [draw,circle, fill] {};
        \node (49) at (67.8bp,239.4bp) [draw,circle, fill] {};
        \node (38) at (133.8bp,199.8bp) [draw,circle, fill] {};
        \node (50) at (56.8bp,81.0bp) [draw,circle, fill] {};
        \node (27) at (23.8bp,160.2bp) [draw,circle, fill] {};
        \node (28) at (45.8bp,160.2bp) [draw,circle, fill] {};
        \node (31) at (67.8bp,199.8bp) [draw,circle, fill] {};
        \node (32) at (89.8bp,199.8bp) [draw,circle, fill] {};
        \node (30) at (155.8bp,199.8bp) [draw,circle, fill] {};
        \node (40) at (23.8bp,120.6bp) [draw,circle, fill] {};
        \node (44) at (67.8bp,160.2bp) [draw,circle, fill] {};
        \node (41) at (45.8bp,120.6bp) [draw,circle, fill] {};
        \node (45) at (89.8bp,160.2bp) [draw,circle, fill] {};
        \node (43) at (133.8bp,160.2bp) [draw,circle, fill] {};
        \draw [] (9) ..controls (20.186bp,271.82bp) and (5.0588bp,245.97bp)  ..
            (5);
        \draw [] (9) ..controls (23.8bp,271.04bp) and (23.8bp,247.49bp)  ..
            (18);
        \draw [] (9) ..controls (27.414bp,271.82bp) and (42.541bp,245.97bp)  ..
            (23);
        \draw [] (5) ..controls (1.8bp,231.44bp) and (1.8bp,207.89bp)  .. (14);
        \draw [] (5) ..controls (5.414bp,232.22bp) and (20.541bp,206.37bp)  ..
            (22);
        \draw [] (20) ..controls (93.414bp,311.42bp) and (108.54bp,285.57bp)
            .. (6);
        \draw [] (20) ..controls (89.8bp,310.64bp) and (89.8bp,287.09bp)  ..
            (33);
        \draw [] (20) ..controls (86.186bp,311.42bp) and (71.059bp,285.57bp)
            .. (46);
        \draw [] (6) ..controls (111.8bp,271.04bp) and (111.8bp,247.49bp)  ..
            (15);
        \draw [] (6) ..controls (108.19bp,271.82bp) and (93.059bp,245.97bp)  ..
            (26);
        \draw [] (21) ..controls (137.41bp,311.42bp) and (152.54bp,285.57bp)
            .. (7);
        \draw [] (21) ..controls (133.8bp,310.64bp) and (133.8bp,287.09bp)  ..
            (34);
        \draw [] (21) ..controls (124.71bp,312.42bp) and (77.901bp,285.75bp)
            .. (46);
        \draw [] (7) ..controls (155.8bp,271.04bp) and (155.8bp,247.49bp)  ..
            (16);
        \draw [] (7) ..controls (152.19bp,271.82bp) and (137.06bp,245.97bp)  ..
            (25);
        \draw [] (29) ..controls (80.896bp,72.836bp) and (88.224bp,47.787bp)
            .. (8);
        \draw [] (29) ..controls (76.704bp,72.836bp) and (69.376bp,47.787bp)
            .. (42);
        \draw [] (8) ..controls (87.704bp,33.236bp) and (80.376bp,8.1874bp)  ..
            (17);
        \draw [] (24) ..controls (42.186bp,311.42bp) and (27.059bp,285.57bp)
            .. (9);
        \draw [] (24) ..controls (45.8bp,310.64bp) and (45.8bp,287.09bp)  ..
            (37);
        \draw [] (24) ..controls (49.414bp,311.42bp) and (64.541bp,285.57bp)
            .. (46);
        \draw [] (48) ..controls (111.8bp,152.24bp) and (111.8bp,128.69bp)  ..
            (10);
        \draw [] (48) ..controls (108.19bp,153.02bp) and (93.059bp,127.17bp)
            .. (51);
        \draw [] (48) ..controls (104.57bp,153.02bp) and (74.318bp,127.17bp)
            .. (47);
        \draw [] (10) ..controls (106.38bp,113.42bp) and (83.688bp,87.57bp)  ..
            (29);
        \draw [] (10) ..controls (109.7bp,112.44bp) and (102.38bp,87.387bp)  ..
            (19);
        \draw [] (1) ..controls (119.03bp,351.02bp) and (149.28bp,325.17bp)  ..
            (11);
        \draw [] (1) ..controls (104.57bp,351.02bp) and (74.318bp,325.17bp)  ..
            (12);
        \draw [] (1) ..controls (111.8bp,350.24bp) and (111.8bp,326.69bp)  ..
            (13);
        \draw [] (11) ..controls (146.71bp,312.42bp) and (99.901bp,285.75bp)
            .. (33);
        \draw [] (11) ..controls (152.19bp,311.42bp) and (137.06bp,285.57bp)
            .. (34);
        \draw [] (2) ..controls (126.57bp,351.02bp) and (96.318bp,325.17bp)  ..
            (20);
        \draw [] (2) ..controls (133.8bp,350.24bp) and (133.8bp,326.69bp)  ..
            (21);
        \draw [] (2) ..controls (137.41bp,351.02bp) and (152.54bp,325.17bp)  ..
            (11);
        \draw [] (12) ..controls (71.414bp,311.42bp) and (86.541bp,285.57bp)
            .. (33);
        \draw [] (12) ..controls (64.186bp,311.42bp) and (49.059bp,285.57bp)
            .. (37);
        \draw [] (3) ..controls (71.414bp,351.02bp) and (86.541bp,325.17bp)  ..
            (20);
        \draw [] (3) ..controls (64.186bp,351.02bp) and (49.059bp,325.17bp)  ..
            (24);
        \draw [] (3) ..controls (67.8bp,350.24bp) and (67.8bp,326.69bp)  ..
            (12);
        \draw [] (13) ..controls (115.41bp,311.42bp) and (130.54bp,285.57bp)
            .. (34);
        \draw [] (13) ..controls (102.71bp,312.42bp) and (55.901bp,285.75bp)
            .. (37);
        \draw [] (4) ..controls (97.028bp,351.02bp) and (127.28bp,325.17bp)  ..
            (21);
        \draw [] (4) ..controls (82.572bp,351.02bp) and (52.318bp,325.17bp)  ..
            (24);
        \draw [] (4) ..controls (93.414bp,351.02bp) and (108.54bp,325.17bp)  ..
            (13);
        \draw [] (14) ..controls (1.8bp,191.84bp) and (1.8bp,168.29bp)  ..
            (35);
        \draw [] (18) ..controls (20.186bp,232.22bp) and (5.0588bp,206.37bp)
            .. (14);
        \draw [] (18) ..controls (27.414bp,232.22bp) and (42.541bp,206.37bp)
            .. (36);
        \draw [] (15) ..controls (111.8bp,231.44bp) and (111.8bp,207.89bp)  ..
            (39);
        \draw [] (33) ..controls (93.414bp,271.82bp) and (108.54bp,245.97bp)
            .. (15);
        \draw [] (33) ..controls (86.186bp,271.82bp) and (71.059bp,245.97bp)
            .. (49);
        \draw [] (16) ..controls (152.19bp,232.22bp) and (137.06bp,206.37bp)
            .. (38);
        \draw [] (34) ..controls (137.41bp,271.82bp) and (152.54bp,245.97bp)
            .. (16);
        \draw [] (34) ..controls (124.71bp,272.82bp) and (77.901bp,246.15bp)
            .. (49);
        \draw [] (42) ..controls (69.896bp,33.236bp) and (77.224bp,8.1874bp)
            .. (17);
        \draw [] (37) ..controls (42.186bp,271.82bp) and (27.059bp,245.97bp)
            .. (18);
        \draw [] (37) ..controls (49.414bp,271.82bp) and (64.541bp,245.97bp)
            .. (49);
        \draw [] (19) ..controls (95.379bp,73.823bp) and (72.688bp,47.97bp)  ..
            (42);
        \draw [] (51) ..controls (91.896bp,112.44bp) and (99.224bp,87.387bp)
            .. (19);
        \draw [] (51) ..controls (84.379bp,113.42bp) and (61.688bp,87.57bp)  ..
            (50);
        \draw [] (22) ..controls (23.8bp,191.84bp) and (23.8bp,168.29bp)  ..
            (27);
        \draw [] (22) ..controls (27.414bp,192.62bp) and (42.541bp,166.77bp)
            .. (28);
        \draw [] (22) ..controls (20.186bp,192.62bp) and (5.0588bp,166.77bp)
            .. (35);
        \draw [] (23) ..controls (42.186bp,232.22bp) and (27.059bp,206.37bp)
            .. (22);
        \draw [] (23) ..controls (49.414bp,232.22bp) and (64.541bp,206.37bp)
            .. (31);
        \draw [] (23) ..controls (53.028bp,232.22bp) and (83.282bp,206.37bp)
            .. (32);
        \draw [] (23) ..controls (45.8bp,231.44bp) and (45.8bp,207.89bp)  ..
            (36);
        \draw [] (46) ..controls (64.186bp,271.82bp) and (49.059bp,245.97bp)
            .. (23);
        \draw [] (46) ..controls (76.888bp,272.82bp) and (123.7bp,246.15bp)  ..
            (25);
        \draw [] (46) ..controls (71.414bp,271.82bp) and (86.541bp,245.97bp)
            .. (26);
        \draw [] (46) ..controls (67.8bp,271.04bp) and (67.8bp,247.49bp)  ..
            (49);
        \draw [] (25) ..controls (126.57bp,232.22bp) and (96.318bp,206.37bp)
            .. (32);
        \draw [] (25) ..controls (137.41bp,232.22bp) and (152.54bp,206.37bp)
            .. (30);
        \draw [] (25) ..controls (133.8bp,231.44bp) and (133.8bp,207.89bp)  ..
            (38);
        \draw [] (26) ..controls (86.186bp,232.22bp) and (71.059bp,206.37bp)
            .. (31);
        \draw [] (26) ..controls (98.888bp,233.22bp) and (145.7bp,206.55bp)  ..
            (30);
        \draw [] (26) ..controls (93.414bp,232.22bp) and (108.54bp,206.37bp)
            .. (39);
        \draw [] (27) ..controls (31.028bp,153.02bp) and (61.282bp,127.17bp)
            .. (47);
        \draw [] (27) ..controls (23.8bp,152.24bp) and (23.8bp,128.69bp)  ..
            (40);
        \draw [] (31) ..controls (75.028bp,192.62bp) and (105.28bp,166.77bp)
            .. (48);
        \draw [] (31) ..controls (60.572bp,192.62bp) and (30.318bp,166.77bp)
            .. (27);
        \draw [] (31) ..controls (67.8bp,191.84bp) and (67.8bp,168.29bp)  ..
            (44);
        \draw [] (28) ..controls (49.414bp,153.02bp) and (64.541bp,127.17bp)
            .. (47);
        \draw [] (28) ..controls (45.8bp,152.24bp) and (45.8bp,128.69bp)  ..
            (41);
        \draw [] (32) ..controls (93.414bp,192.62bp) and (108.54bp,166.77bp)
            .. (48);
        \draw [] (32) ..controls (82.572bp,192.62bp) and (52.318bp,166.77bp)
            .. (28);
        \draw [] (32) ..controls (89.8bp,191.84bp) and (89.8bp,168.29bp)  ..
            (45);
        \draw [] (47) ..controls (69.896bp,112.44bp) and (77.224bp,87.387bp)
            .. (29);
        \draw [] (47) ..controls (65.704bp,112.44bp) and (58.376bp,87.387bp)
            .. (50);
        \draw [] (30) ..controls (148.57bp,192.62bp) and (118.32bp,166.77bp)
            .. (48);
        \draw [] (30) ..controls (152.19bp,192.62bp) and (137.06bp,166.77bp)
            .. (43);
        \draw [] (35) ..controls (5.414bp,153.02bp) and (20.541bp,127.17bp)  ..
            (40);
        \draw [] (35) ..controls (9.0281bp,153.02bp) and (39.282bp,127.17bp)
            .. (41);
        \draw [] (36) ..controls (38.572bp,192.62bp) and (8.3176bp,166.77bp)
            .. (35);
        \draw [] (36) ..controls (49.414bp,192.62bp) and (64.541bp,166.77bp)
            .. (44);
        \draw [] (36) ..controls (53.028bp,192.62bp) and (83.282bp,166.77bp)
            .. (45);
        \draw [] (49) ..controls (64.186bp,232.22bp) and (49.059bp,206.37bp)
            .. (36);
        \draw [] (49) ..controls (76.888bp,233.22bp) and (123.7bp,206.55bp)  ..
            (38);
        \draw [] (49) ..controls (75.028bp,232.22bp) and (105.28bp,206.37bp)
            .. (39);
        \draw [] (38) ..controls (126.57bp,192.62bp) and (96.318bp,166.77bp)
            .. (45);
        \draw [] (38) ..controls (133.8bp,191.84bp) and (133.8bp,168.29bp)  ..
            (43);
        \draw [] (39) ..controls (104.57bp,192.62bp) and (74.318bp,166.77bp)
            .. (44);
        \draw [] (39) ..controls (115.41bp,192.62bp) and (130.54bp,166.77bp)
            .. (43);
        \draw [] (40) ..controls (29.221bp,113.42bp) and (51.912bp,87.57bp)  ..
            (50);
        \draw [] (44) ..controls (71.414bp,153.02bp) and (86.541bp,127.17bp)
            .. (51);
        \draw [] (44) ..controls (60.572bp,153.02bp) and (30.318bp,127.17bp)
            .. (40);
        \draw [] (41) ..controls (47.896bp,112.44bp) and (55.224bp,87.387bp)
            .. (50);
        \draw [] (45) ..controls (89.8bp,152.24bp) and (89.8bp,128.69bp)  ..
            (51);
        \draw [] (45) ..controls (82.572bp,153.02bp) and (52.318bp,127.17bp)
            .. (41);
        \draw [] (50) ..controls (58.896bp,72.836bp) and (66.224bp,47.787bp)
            .. (42);
        \draw [] (43) ..controls (126.57bp,153.02bp) and (96.318bp,127.17bp)
            .. (51);
    \end{tikzpicture}
    \caption{The Hasse diagram of the set of non-zero Left dead ends born by
    day 4.}
    \label{fig:hasse4}
\end{figure}
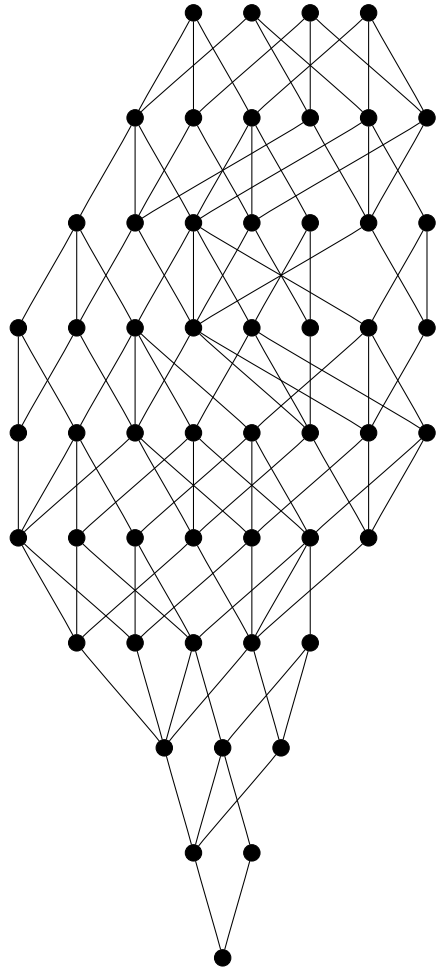

\section{}
\label{app:pocancellativity}

It is not known exactly how pocancellativity and invertibility differ within a
given universe (invertibility, of course, always implies pocancellativity). In
the recent work by the present author, McKay, Milley, Nowakowski, and Santos
\cite{davies.mckay.ea:pocancellation}, however, the universe $\mathcal{E}$ was
shown to be pocancellative (with `$\geq_\mathcal{E}$' as the partial order,
naturally). But $\mathcal{E}$ does not form a group, and so we have an example
of a universe where the submonoid of pocancellative elements (i.e.\ the
pocancellative submonoid) differs from the submonoid of invertible elements
(i.e.\ the invertible submonoid).

The other two universes studied in the paper were $\mathcal{D}$ and
$\mathcal{M}$, which were shown not to be pocancellative. Even though
$\mathcal{M}$ was shown to have a sizable submonoid that is pocancellative (the
set of blocking games $\mathcal{B}$), there were no pocancellative elements
discovered of the full monoid. The situation for $\mathcal{D}$ was more dire:
no pocancellative submonoids were discovered at all, let alone pocancellative
elements.

Given that we have characterisations of the invertible elements of
$\mathcal{D}$ and $\mathcal{E}$, we have an initial question: what do the
pocancellative elements of $\mathcal{D}$ and $\mathcal{E}$ look like, and how
do they differ from the invertible elements of the two monoids respectively? A
grander question, still, is the following.

\begin{problem}
    In a universe $\mathcal{U}$, which elements, if any, are pocancellative but
    not invertible?
\end{problem}

\end{document}